\newcommand{\be}{\begin{equation}}
\newcommand{\ee}{\end{equation}}
\newcommand{\ba}{\begin{eqnarray}}
\newcommand{\ea}{\end{eqnarray}}
\newcommand{\bi}{\begin{itemize}}
\newcommand{\ei}{\end{itemize}}
\newcommand{\bn}{\begin{enumerate}}
\newcommand{\en}{\end{enumerate}}
\newcommand{\bbm}{\begin{bmatrix}}
\newcommand{\ebm}{\end{bmatrix}}
\newcommand{\bp}{\begin{proof}}
\newcommand{\ep}{\end{proof}}
\newcommand{\nn}{\nonumber}
\newcommand{\mr}{\ensuremath{\mathrm}}
\newcommand{\scr}{\ensuremath{\mathscr}}
\newcommand{\mbf}{\ensuremath{\mathbf}}
\newcommand{\mc}{\ensuremath{\mathcal}}
\newcommand{\ov}{\ensuremath{\overline}}
\newcommand{\sm}{\ensuremath{\setminus}}
\newcommand{\ga}{\ensuremath{\gamma}}
\newcommand{\Om}{\ensuremath{\Omega}}
\newcommand{\La}{\ensuremath{\Lambda }}
\newcommand{\la}{\ensuremath{\lambda }}
\def\C{\mathbb{C}}
\def\R{\mathbb{R}}
\def\D{\mathbb{D}}
\def\T{\mathbb{T}}
\def\N{\mathbb{N}}
\def\B{\mathbb{B}}
\def\A{\mathcal{A}}
\renewcommand{\H}{\ensuremath{\mathcal{H} }}
\newcommand{\J}{\ensuremath{\mathcal{J} }}
\newcommand{\m}{\ensuremath{\mathbf{m} }}
\newcommand{\n}{\ensuremath{\mathbf{n} }}
\newcommand{\K}{\ensuremath{\mathcal{K} }}
\renewcommand{\L}{\ensuremath{\mathcal{L} }}
\newcommand{\F}{\ensuremath{\mathbb{F} }}
\newcommand{\rft}{\ensuremath{\hat{\mc{F}} _R }}
\newcommand{\rct}{\ensuremath{\hat{\mc{C}} _R }}
\newcommand{\lft}{\ensuremath{\hat{\mc{F}} _L }}
\newcommand{\lct}{\ensuremath{\hat{\mc{C}} _L }}
\newcommand{\ip}[2]{\ensuremath{\left\langle {#1} , {#2} \right\rangle}}
\renewcommand{\dim}[1]{\ensuremath{\mathrm{dim} \left( {#1} \right) }}
\newcommand{\ran}[1]{\ensuremath{\mathrm{Ran} \left( {#1} \right) }}
\renewcommand{\ker}[1]{\ensuremath{\mathrm{Ker} \left( {#1} \right) }}
\newcommand{\im}[1]{\ensuremath{\mathrm{Im} \left( {#1} \right) }}
\newcommand{\re}[1]{\ensuremath{\mathrm{Re} \left( {#1} \right) }}
\numberwithin{equation}{section}
\numberwithin{subsection}{section}
\newtheorem{thm}[subsection]{Theorem}
\newtheorem{lemma}[subsection]{Lemma}
\newtheorem{prop}[subsection]{Proposition}
\newtheorem{cor}[subsection]{Corollary}
\theoremstyle{definition}
\newtheorem{defn}[subsection]{Definition}
\newtheorem{remark}[subsection]{Remark}
\newtheorem{eg}[subsection]{Example}
\title[Free Aleksandrov-Clark Theory]{Non-commutative Clark measures for the Free and Abelian Toeplitz Algebras}
\author{M.T. Jury}
\address{University of Florida}
\email{mjury@ad.ufl.edu}
\author{R.T.W. Martin}
\address{University of Cape Town}
\email{rtwmartin@gmail.com}
\begin{document}
\bibliographystyle{unsrt}
\maketitle
\onehalfspace

\begin{abstract}

   We construct a non-commutative Aleksandrov-Clark measure for any element in the operator-valued free Schur class, the closed unit ball of the free Toeplitz algebra of vector-valued full Fock space over $\C ^d$.
Here, the free (analytic) Toeplitz algebra is the unital weak operator topology (WOT)-closed algebra generated by the component operators of the free shift, the row isometry of left creation operators. This defines a bijection between the free operator-valued Schur class and completely positive maps (non-commutative AC measures) on the operator system of the free disk algebra, the norm-closed algebra generated by the free shift.

Identifying Drury-Arveson space with symmetric Fock space, we determine the relationship between the non-commutative AC measures for elements of the operator-valued commutative Schur class (the closed unit ball of the WOT-closed Toeplitz algebra generated by the Arveson shift) and the AC measures of their free liftings to the free Schur class.

\end{abstract}

\section{Introduction}

In the classical, single-variable theory of Hardy spaces of analytic functions in the complex unit disk, $\D$, there are natural bijections between the three classes of objects:
\bn
    \item the \emph{Schur class}, $\scr{S}$, of contractive analytic functions on the complex unit disk, $\D$,
    \item the \emph{Herglotz class}, $\scr{S} ^+$, of analytic functions with non-negative real part on the disk, and,
    \item the cone of positive finite Borel measures on the unit circle, $\T $.
\en
The bijection between Schur functions, $b$, and Herglotz functions, $H$, is given by:
$$ b \mapsto \frac{1+b}{1-b} \in \scr{S} ^+; \quad \quad \mbox{and} \quad \quad H \mapsto \frac{H-1}{H+1} \in \scr{S}, $$ these maps are compositional inverses (we assume here that $b$ is not the constant function $b \equiv 1$). The bijection between the second two sets is given by the Herglotz representation formula:
\be \frac{1+b(z)}{1-b(z)} =: H_b (z) = \int _\T \frac{1 +z\zeta ^*}{1-z\zeta ^*} \mu _b (d\zeta ) + i \im{H_b (0)}, \label{classicH} \ee (this is really a bijection modulo imaginary constants). In the above $\zeta ^* := \ov{\zeta }$ denotes complex conjugate. The unique positive Borel measure $\mu _b$ corresponding to $b$, is called the Herglotz or \emph{Aleksandrov-Clark measure} of $b$. More generally, for any $\alpha \in \T$, the Herglotz measure $\mu _\alpha := \mu _{b\alpha ^*}$ is called an Aleksandrov-Clark (AC) measure for $b$. The theory of Aleksandrov-Clark measures has played an important role in the development of Hardy space theory and model theory for contractions on Hilbert space, as well as in characterizations of the Schur class \cite{PolSar-AC,Saks-AC,Math-AC,GR-model,Clark1972}.

Given any AC measure, $\mu _b$, it is natural to consider the associated measure space $L^2 (\mu _b) := L^2 (\mu _b , \T )$ of measurable functions on the circle which are square-integrable with respect to $\mu _b$, as well as the analytic subspaces $H^2 (\mu _b ), H^2 _0 (\mu _b ) \subseteq L^2 (\mu _b)$,
$$ H^2 (\mu _b) := \bigvee _{n\geq 0} \zeta ^n \ \supseteq \ \bigvee _{n \geq 1} \zeta ^n=: H^2 _0 ( \mu _b), $$ the closed linear spans of the analytic polynomials and non-constant analytic monomials, respectively. A function-theoretic argument combined with the classical distance formula of Szeg\"{o}-Kolmomogoroff-Kre\v{\i}n for the distance from $H^2 _0 (\mu _b)$ to the constant function $1$ in $L^2 (\mu _b)$ shows that $H^2 _0 ( \mu _b) = H^2 (\mu _b) = L^2 ( \mu _b)$ if and only if $b$ is an extreme point of the Schur class \cite[Chapter 4, Chapter 9]{Hoff}.

On the other hand, given any contractive analytic function, $b$, on the open unit disk, it is also natural to consider the sesqui-analytic positive kernel function $k^b : \D \times \D \rightarrow \C$:
$$ k^b (z,w) := \frac{1 - b(z) b(w) ^*}{1-zw^*}; \quad \quad z,w \in \B ^d, $$ the \emph{deBranges-Rovnyak kernel} of $b$. Elementary reproducing kernel Hilbert space (RKHS) theory implies that there is a unique RKHS of analytic functions in the disk, $\H (k^b)$, corresponding to $k^b$, and that $\H (k^b )$ is contractively contained in the Hardy space $H^2 (\D )$. This space is called the \emph{deBranges-Rovnyak space} of $b$ and we will use the standard notation $\scr{H} (b) := \H (k^b)$. One can also show that, in this single-variable setting, any deBranges-Rovnyak space is invariant for $S^*$, the \emph{backward shift} on $H^2 (\D)$ which acts as the difference quotient: $$ (S^*h) (z) = \frac{h(z) -h(0)}{z}. $$ Here the shift, $S$, is the isometry of multiplication by $z$ on $H^2 (\D)$, and this operator is central to the study of function theory and operator theory on Hardy space \cite{Nik-Shift,NF,Hoff}.

In the seminal paper \cite{Clark1972}, D.N. Clark established the following results for the case of inner $b$ (the general versions for all Schur class functions can be found in \cite[Chapter III]{Sarason-dB}): Let $\hat{Z} ^b$ denote the unitary operator of multiplication by the independent variable in $L^2 (\mu _b)$. The analytic subspace $H^2 (\mu _b)$ is invariant for $\hat{Z} ^b$, and we set $Z^b := \hat{Z} ^b | _{H^2 (\mu _b )}$, an isometry which equals $\hat{Z} ^b$ if and only if $b$ is an extreme point of the Schur class.

\begin{lemma}{ (weighted Cauchy transform)} \label{weightedCT}
    For any contractive analytic $b \in \scr{S}$, and any $\alpha \in \T$, the \emph{weighted Cauchy transform} $\mc{F} _\alpha : H^2 (\mu _{b\alpha ^*}) \rightarrow \scr{H} (b \alpha ^* ) = \scr{H} (b)$ defined by
$$ (\mc{F} _\alpha f ) (z) := (1 - b(z) \alpha ^* ) \int _\T \frac{f(\zeta )}{1 -z\zeta ^* } \mu _{b \alpha ^* } (d\zeta ),  $$ is a linear isometry of the analytic subspace $H^2 (\mu _{b\alpha ^*} )$
onto the deBranges-Rovnyak space $\scr{H} (b)$.
\end{lemma}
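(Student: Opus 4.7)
The strategy is to reduce immediately to the case $\alpha = 1$ via the identity $k^{b\alpha ^*} = k^b$, which gives $\scr{H} (b\alpha ^*) = \scr{H} (b)$; write $\mu := \mu _b$ and $\mc{F} := \mc{F} _1$. I would then verify isometry and surjectivity simultaneously by computing $\mc{F}$ explicitly on a total family in $H^2 (\mu)$ and matching its image with the reproducing kernels of $\scr{H} (b)$. The natural test functions are the Szeg\"{o} kernels $c_w (\zeta ) := (1 - w^* \zeta ) ^{-1}$, $w \in \D$; by uniform convergence on $\T$ of the geometric series $c_w = \sum _{n \geq 0} (w^*) ^n \zeta ^n$, each $c_w$ lies in $H^2 (\mu)$, and differentiating in $w$ at the origin recovers the monomials $\zeta ^n$ for $n \geq 0$, so their linear span is dense there.

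The main calculation identifies $\mc{F} c_w$ with a scalar multiple of the deBranges-Rovnyak kernel. On $\T$ the identity $\zeta ^* = \zeta ^{-1}$ gives the partial-fraction decomposition
$$ \frac{1}{(1 - w^* \zeta )(1 - z\zeta ^*)} = \frac{1}{1 - zw^*} \left[ \frac{1}{1 - w^* \zeta } + \frac{1}{1 - z\zeta ^*} - 1 \right] , $$
and integrating against $\mu$ while invoking the Herglotz formula (\ref{classicH}) in the rewritten form $\int _\T (1 - z\zeta ^*) ^{-1} \mu (d\zeta ) = (1 - b(z)) ^{-1} + c_0$, with $c_0 \in \C$ satisfying $c_0 + \ov{c_0} = \mu (\T) - 1$, causes the additive constants to cancel and yields, after a short algebraic simplification,
$$ \int _\T \frac{\mu (d\zeta )}{(1 - w^* \zeta )(1 - z\zeta ^*)} = \frac{k^b _w (z)}{(1 - b(z))(1 - b(w) ^*)} . $$
Multiplying by $1 - b(z)$ produces the clean identity
$$ (\mc{F} c_w)(z) = \frac{k^b _w (z)}{1 - b(w) ^*} , $$
so $\mc{F}$ sends Szeg\"{o} kernels to deBranges-Rovnyak kernels up to explicit scalars.

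Once this is in hand, the isometry property on the spanning set is automatic: by the reproducing property in $\scr{H} (b)$,
$$ \lng \mc{F} c_w , \mc{F} c_z \ra _{\scr{H} (b)} = \frac{k^b _w (z)}{(1 - b(w) ^*)(1 - b(z))}, $$
which is exactly the double Cauchy integral computed above for $\lng c_w , c_z \ra _{L^2 (\mu )}$. Extending $\mc{F}$ by continuity then produces a linear isometry $H^2 (\mu ) \rightarrow \scr{H} (b)$, and surjectivity follows because the closed range of $\mc{F}$ contains every kernel $k^b _w$, and these are total in $\scr{H} (b)$.

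The single step demanding real effort is the partial-fraction-plus-Herglotz collapse producing the closed form for the double Cauchy integral; the matching of Gram matrices and the density argument are then routine RKHS manipulations. A minor technicality is ensuring $1 - b(w) ^* \neq 0$ for $w \in \D$ so the scalar divisions are legitimate; this is guaranteed by the maximum principle unless $b$ is a unimodular constant, in which case the statement is trivial on both sides.
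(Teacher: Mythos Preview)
The paper does not actually prove Lemma~\ref{weightedCT}; it is stated as classical background and attributed to Clark~\cite{Clark1972} and Sarason~\cite[Chapter~III]{Sarason-dB}. Your argument is correct and is, in fact, the standard one: match Gram matrices on the total family of Cauchy kernels $c_w$ using the partial-fraction identity on $\T$ together with the Herglotz formula, obtain $\mc{F} c_w = (1-b(w)^*)^{-1} k^b_w$, and read off both isometry and surjectivity from kernel density. The computations you outline check out, including the cancellation of the additive constant $c_0$ via $c_0 + \ov{c_0} = \mu(\T) - 1$.

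It is worth noting that your approach is precisely the one the paper adopts when it \emph{does} prove the free analogue in Section~5: there the free Cauchy transform $\rct$ is shown to be an onto isometry by computing $(\rct \, \pi_B(L)^\alpha [I\otimes]_B)^* (\rct \, \pi_B(L)^\beta [I\otimes]_B) = \hat{K}^R_{\alpha,\beta}$, which is exactly the Gram-matrix-matching strategy applied to the free ``kernel'' elements $\pi_B(L)^\alpha [I\otimes]_B$. So your classical proof and the paper's free proof are structurally identical, with the double Cauchy integral replaced by the free Herglotz coefficient kernel $\hat{K}^R_{\alpha,\beta} = \mu_B((L^\alpha)^* L^\beta)$.
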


For simplicity assume $b(0) = 0$ and let $X ^* := S^* | _{\scr{H} (b)}$. For any $\alpha \in T$, let $\mc{F} _\alpha := \mc{F} _{b\alpha ^*}$ and $Z_\alpha := Z^{b \alpha ^*}$.

\begin{thm}{ (Clark's unitary perturbations)} \label{uniperturb}
     Let $b \in \scr{S}$ be a contractive analytic function in the disk (assume $b(0) =0$). Given any $\alpha \in \T$,  the weighted Cauchy transform $\mc{F} _\alpha$ intertwines the co-isometry $Z _\alpha ^*$ with a rank-one perturbation of $X^*$:
$$ X _\alpha ^* := \mc{F} _\alpha Z _\alpha  ^* \mc{F} _\alpha ^* = X^* + \ip{\cdot}{1} S^* b \alpha ^*. $$
The point evaluation vector at $0$, $k_0 ^b \equiv 1 \in \scr{H} (b)$ is cyclic for each $X _\alpha$.

If $b$ is an extreme point of the Schur class then $Z _\alpha = \hat{Z} ^{b\alpha ^*} $ is unitary so that each $X _\alpha$ is a rank-one unitary perturbation of the restricted backward shift $X$. In this case if
$P _\alpha$ denotes the projection-valued measure of $X _\alpha$ then $\mu _\alpha ( \Om ) = \ip{P _\alpha (\Om ) 1}{ 1 }.$
\end{thm}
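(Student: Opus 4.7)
The plan is to verify the intertwining formula on the dense subset of deBranges--Rovnyak reproducing kernels $\{k_w^b : w \in \D\} \subset \scr{H}(b)$, and then read off the cyclicity and spectral statements as consequences.

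First I would pull back $k_w^b$ under $\mc{F}_\alpha$. Since $b(0)=0$, the Herglotz formula for $H_{b\alpha^*} = (1+b\alpha^*)/(1-b\alpha^*)$ forces $\mu_{b\alpha^*}(\T)=1$ and $\im{H_{b\alpha^*}(0)}=0$, which collapses the Cauchy transform to the explicit identity $\int_\T (1-z\zeta^*)^{-1}\,\mu_{b\alpha^*}(d\zeta) = (1-b(z)\alpha^*)^{-1}$. A partial--fractions reduction of the two--variable integral $\int (1-w^*\zeta)^{-1}(1-z\zeta^*)^{-1}\,d\mu_{b\alpha^*}$, followed by the cancellation $\alpha\alpha^* = 1$ in the numerator, simplifies it to $k_w^b(z)/[(1-b(z)\alpha^*)(1-\alpha b(w)^*)]$. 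Writing $h_w(\zeta) := (1-w^*\zeta)^{-1}$ for the Cauchy kernel, this yields $\mc{F}_\alpha h_w = (1-\alpha b(w)^*)^{-1} k_w^b$, equivalently $\mc{F}_\alpha^* k_w^b = (1-\alpha b(w)^*)\, h_w$. I expect this Cauchy--kernel identity to be the main obstacle; the remaining manipulations are essentially bookkeeping.

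Now apply $Z_\alpha^*$. The elementary identity $(1-w^*\zeta)h_w = 1$ rearranges, using $Z_\alpha^* Z_\alpha = I$, to $Z_\alpha^* h_w = w^* h_w + Z_\alpha^* 1$. To evaluate $\mc{F}_\alpha Z_\alpha^* 1$, I compute $\ip{Z_\alpha^* 1}{h_z}_{L^2(\mu_{b\alpha^*})} = \ip{1}{\zeta h_z} = \int \zeta^*(1-z\zeta^*)^{-1}\,d\mu_{b\alpha^*} = z^{-1}((1-b(z)\alpha^*)^{-1} - 1)$, and folding this back through the definition of $\mc{F}_\alpha$ gives $\mc{F}_\alpha Z_\alpha^* 1 = \alpha^* S^* b$ (using $b(0)=0$ so that $b(z)/z = S^*b(z)$). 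Combining, $\mc{F}_\alpha Z_\alpha^* \mc{F}_\alpha^* k_w^b = w^* k_w^b + (\alpha^* - b(w)^*)\, S^* b$. On the other hand, a direct difference--quotient computation with $k_w^b(0)=1$ yields $X^* k_w^b = w^* k_w^b - b(w)^* S^* b$, and since $\ip{k_w^b}{1}_{\scr{H}(b)} = \overline{1(w)} = 1$, the claimed operator $X^* + \ip{\cdot}{1}_{\scr{H}(b)}S^*b\alpha^*$ also sends $k_w^b$ to $w^* k_w^b + (\alpha^* - b(w)^*)S^*b$. The two bounded operators agree on the total set $\{k_w^b\}$, hence everywhere.

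For cyclicity, setting $w=0$ in $\mc{F}_\alpha h_w = (1-\alpha b(w)^*)^{-1}k_w^b$ gives $\mc{F}_\alpha 1 = k_0^b = 1$; since $1$ is cyclic for $Z_\alpha$ by the very definition $H^2(\mu_{b\alpha^*}) = \bigvee_{n\ge 0} Z_\alpha^n 1$, unitary equivalence via $\mc{F}_\alpha$ transports this cyclicity to $X_\alpha$ on $\scr{H}(b)$. Finally, if $b$ is extreme, then by the Szeg\"{o}--Kolmogoroff--Kre\v{\i}n criterion recalled in the introduction, $H^2(\mu_{b\alpha^*}) = L^2(\mu_{b\alpha^*})$, so $Z_\alpha = \hat Z^{b\alpha^*}$ is unitary and $X_\alpha$ is a genuine rank--one unitary perturbation of $X$. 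The spectral projections transfer through $\mc{F}_\alpha$ as $P_\alpha(\Om) = \mc{F}_\alpha M_{\chi_\Om} \mc{F}_\alpha^*$, so using $\mc{F}_\alpha^* 1 = 1$ one obtains $\ip{P_\alpha(\Om)1}{1}_{\scr{H}(b)} = \ip{\chi_\Om}{1}_{L^2(\mu_{b\alpha^*})} = \mu_{b\alpha^*}(\Om) = \mu_\alpha(\Om)$.
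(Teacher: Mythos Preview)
Your proof is correct. Note, however, that the paper does not actually supply its own proof of this classical theorem; it is stated in the introduction as background, with citations to Clark's original paper and to Sarason's monograph. So there is no ``paper's proof'' to compare against for this particular statement.

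That said, your argument is precisely the single-variable, scalar specialization of the strategy the paper later uses to prove its \emph{free} Clark intertwining theorem in Section~\ref{Clarksection}. There the authors compute both $(L^* \otimes I_\H)\,\hat{k}^R_W$ and $\rft\,\pi_B(L)^*\,(\rft)^*\,\hat{k}^R_W$ on formal kernel maps (equations (\ref{ClarkA}) and (\ref{ClarkB})), subtract, and identify the difference with the rank-one-type perturbation $\mbf{B}(I-B_\emptyset)^{-1}(\hat{k}^R_\emptyset)^*$. Your computation of $X^* k_w^b = w^* k_w^b - b(w)^* S^* b$ and of $\mc{F}_\alpha Z_\alpha^* \mc{F}_\alpha^* k_w^b$ via the identity $Z_\alpha^* h_w = w^* h_w + Z_\alpha^* 1$ plays exactly the same role. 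Your separate evaluation of $\mc{F}_\alpha Z_\alpha^* 1 = \alpha^* S^* b$ is the scalar analogue of Proposition~\ref{freeGSform}, which identifies the Gleason solution $\mbf{B}^R$ as $\rft\,\pi_B(L)^*[I\otimes]_B(I-B_\emptyset)$.

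One small presentational point: the equality $\ip{1}{\zeta h_z} = \int \zeta^*(1-z\zeta^*)^{-1}\,d\mu_{b\alpha^*}$ is a conjugation mismatch under the paper's convention (conjugate-linear in the first slot). What you actually need is the Cauchy integral $\int \frac{(Z_\alpha^*1)(\zeta)}{1-z\zeta^*}\,d\mu_{b\alpha^*}$, and since the kernel $(1-z\zeta^*)^{-1}$ expands in the analytic monomials $\{\bar\zeta^n\}_{n\ge 0}$, this integral depends only on the $H^2(\mu_{b\alpha^*})$-component and hence equals $\int \bar\zeta(1-z\zeta^*)^{-1}\,d\mu_{b\alpha^*}$, which you then evaluate correctly. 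The final answer and the overall argument are unaffected.
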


\begin{remark}
    In the case where $b$ is an extreme point (so that $H^2 (\mu _b ) = L^2 (\mu _b)$), the inverse of the weighted Cauchy transform $\mc{F} _\alpha$ implements a spectral realization for the unitary operator $X _\alpha$.
\end{remark}

Recently, the concept of Aleksandrov-Clark measure and all of the above results have been generalized to the several-variable setting of Drury-Arveson space \cite{Jur2014AC} (see \cite{JM} for the vector-valued version). Here, the Drury-Arveson space, $H^2 _d$, consists of analytic functions on the open unit ball of $d$-dimensional complex space, and is a canonical several-variable generalization of the classical Hardy space $H^2 (\D )$. We will briefly recall the relevant definitions in the upcoming subsection. The appropriate several-variable analogue of the Schur class is the closed unit ball of the several-variable (analytic) \emph{Toeplitz} or \emph{Hardy algebra}, $H^\infty _d $, the (commutative) WOT-closed operator algebra generated by the Arveson $d-$shift on $H^2 _d$. (Here, note that the classical Schur class of the disk can be identified with the closed unit ball of the Banach algebra $H^\infty (\D ) = H^\infty _1$ of bounded analytic functions in the open disk, and that $H^\infty (\D )$ can be identified with the unital WOT-closed operator algebra generated by the shift.) The Aleksandrov-Clark measures are necessarily promoted to positive linear functionals (or completely positive maps in the vector-valued setting) acting on a certain `symmetrized' operator subsystem $\mc{S} + \mc{S} ^* $, $\mc{S} := \mc{S} _d$, of $\mc{A} + \mc{A}  ^*$, where $\mc{A}:= \A _d $ is the left free disk algebra, the unital norm-closed (non-commutative) operator algebra generated by the left creation operators on the full Fock space over $\C ^d$. The measure space $H^2 (\mu _b)$ in the several-variable setting is naturally generalized to a Gelfand-Naimark-Segal-type space with inner product constructed using the non-commutative AC measure, $\mu _b$, of $b$ (as in the proof of Stinespring's dilation theorem from \cite{Paulsen}). With this dictionary, the classical correspondence between the Schur class, Herglotz functions and AC measures can be extended to define bijections between \cite{Jur2014AC,JM}:
\bn
\item The (operator-valued, several-variable) Schur class, $\scr{S} _d (\H) := [H^\infty _d \otimes \L (\H) ] _1 $,
\item  The (operator-valued, several-variable) Herglotz-Schur class, $\scr{S} _d ^+ (\H)$, consisting of Herglotz-Schur functions $H_b (z) := (I-b(z) ) ^{-1} (I+b(z)$, on $\B ^d$, for $b \in \scr{S} _d (\H )$, and,
\item   The positive cone $CP (\mc{S}  ; \H)$ of all completely positive (CP) operator-valued maps $\mu $ from the symmetrized operator system $\mc{S} + \mc{S}  ^* $ into $ \L (\H )$.
\en
As before, if $b \in \scr{S} _d (\H )$, the corresponding CP map $\mu _b \in CP (\mc{S} ; \H )$ is called the Aleksandrov-Clark (AC) map, or non-commutative AC measure, of $b$. These AC maps are direct several-variable generalizations of the classical AC measures.

In this paper our goal is two-fold. Our first aim is to further extend the notion of a non-commutative Aleksandrov-Clark measure, the above bijection between the Schur class and AC measures, Clark's unitary perturbations  and several related results to the setting of the (left and right)
\emph{free Schur class} of the (left and right) \emph{free analytic Toeplitz algebra}. Here, the left (right) non-commutative or \emph{free analytic Toeplitz algebra}, or more simply \emph{free Toeplitz algebra}, $L^\infty _d \ (R^\infty _d)$,  is the unital WOT-closed algebra generated by the left (right) creation operators on the full Fock space, $F^2 _d$, over $\C ^d$. As in the abelian case, we will often omit the term analytic and call $L^\infty _d$ the left free Toeplitz algebra. The left and right (operator-valued) free Schur classes, $\scr{L}  _d (\H)$, $\scr{R}  _d (\H ) $ are then the closed unit balls of the left and right free (operator-valued) Toeplitz algebras associated to vector-valued Fock space $F^2 _d \otimes \H$. The connection with the commutative theory is that Drury-Arveson space, $H^2 _d$, can be naturally identified with symmetric Fock space, $H^2 _d \subset F^2 _d$, and under this identification $H^2 _d$ is co-invariant and full (\emph{i.e.} cyclic) for both the left and right \emph{free shifts} (the row isometries of left and right creation operators). That is, if $L$ denotes the left free shift, $L$ is the minimal row isometric dilation of its compression to $H^2 _d$, and this compression is the commutative Arveson $d-$shift, $S$, on $H^2 _d$. The commutative several-variable Toeplitz algebra, $H^\infty _d$, can then be identified with the quotient of either the left or right free Toeplitz algebra by the two-sided commutator ideal. Equivalently, $H^\infty _d$ can be obtained as the compression of $L^\infty _d $ or $R^\infty _d$ to symmetric Fock space $H^2 _d$, and this compression is a completely contractive unital epimorphism \cite{DP-NP}. By commutant lifting, given any commutative Schur class element $b \in \scr{S} _d (\H )$, there are both left and right \emph{free lifts}, $B^L \in \scr{L} _d (\H )$, $B^R \in \scr{R} _d (\H )$ so that their image under the quotient map is $b$ \cite{Ball2001-lift,Dav2011}. That is, if, for example, $M^L _B$ denotes left multiplication by $B ^L$ on $F^2 _d$, then $(M^L _B) ^* | _{H^2 _d} = M_b ^*$, and $B^L, B^R, b$ have the same norm. Of course these free lifts need not be unique. We will see that left and right free lifts come in pairs $B:= (B^L, B^R)$ which are conjugate via transposition, the canonical involution between the left and right free Toeplitz algebras, and that each pair, $B$, corresponds to a unique non-commutative Aleksandrov-Clark measure. This non-commutative AC measure is a completely positive (CP) map, $\mu _B : \mc{A} + \mc{A} ^* \rightarrow \L (\H)$.

Our second goal, then, is to relate any non-commutative AC completely positive measure $\mu _B$ of a transpose-conjugate pair of free lifts $B = (B ^L, B ^R) \in \scr{L} _d (\H ) \times \scr{R} _d (\H )$ of a given commutative $b \in \scr{S} _d (\H)$ with the AC map $\mu _b $ acting on the symmetrized subsystem $\mc{S}  + \mc{S} ^* \subseteq \mc{A} + \mc{A}  ^*$ as constructed in \cite{Jur2014AC,JM}. In particular, we will show that any such $\mu _B$ is a completely positive extension of $\mu _b$, and that $b$ has a unique pair of free lifts if and only if $\mu _b, \mu _B$ are \emph{quasi-extreme} in the sense of \cite{Jur2014AC,JM}, a property which reduces to the classical Szeg\"{o} approximation property: $H^2 (\mu _b) = H^2 _0 (\mu _b)$ in the single-variable, scalar-valued setting (and which is equivalent to being an extreme point in this case) \cite{JMqe,Hoff}. This bijective characterization of the set of all free lifts of a given Schur class $b \in \scr{S} _d (\H)$ provides an alternative to the canonical deBranges-Rovnyak colligation and transfer function realization of the commutative and free Schur classes of \cite{Ball2008,Ball2010,Ball2006Fock}. In particular our characterization has the advantage of providing a bijective parametrization of the set of all (generally non-unique in the commutative case) canonical deBranges-Rovnyak colligations in terms of certain completely positive extensions of the AC map $\mu _b$  to the full free disk operator system (equivalently in terms of certain free lifts of $b$).  In Section \ref{TFsection} we work out the precise relationship between the canonical deBranges-Rovnyak colligations in the free and commutative settings.

\subsection{Preliminaries} \label{prelim}

Recall that Drury-Arveson space, $H^2 _d$, is the unique RKHS on $\B ^d:= (\C^d ) _1$ corresponding to the several-variable sesqui-analytic Szeg\"{o} kernel:
$$ k (z,w) := \frac{1}{1-zw^*}; \quad \quad z, w \in \B^d, $$ where in the above $z = (z_1, ... , z_d ); \ w^* := (\ov{w _1}, ... , \ov{w_d} )$, and $zw^* := (w, z) _{\C ^d}$ (all inner products are assumed conjugate linear in the first argument).

Given any RKHS $\H (K)$ of $\H$-valued functions on a set $X$, a natural construction to consider is the \emph{multiplier algebra}, $\mr{Mult} (\H (\K) )$, of $\H (K)$. This is the algebra of all functions $m : X \rightarrow \L (\H)$ so that $mf \in \H (K)$ for all $f \in \H (K)$. That is, the multiplier algebra is the algebra of all functions, or \emph{multipliers}, which multiply $\H (K)$ into itself. This algebra is clearly unital, and standard functional analytic arguments show that any multiplier, $F$, defines a bounded linear multiplication operator, $M _F$, on $\H (K)$ and under this identification, $\mr{Mult} (\H (K) )$ is closed in the weak operator topology (WOT) of $\L (\H (K) )$.

The multiplier algebra, $\mr{Mult} (H ^2 _d )$, of the RKHS $H^2 _d$ is $H^\infty _d$, the several-variable (analytic) Toeplitz or Hardy algebra, the WOT-closure of the unital operator algebra generated by the Arveson $d$-shift. Here recall the Arveson shift, $S : H^2 _d \otimes \C ^d \rightarrow H^2 _d$, is the (commutative) row partial isometry $S = (S_1, ... , S_d)$ whose component operators act as multiplication by the independent variables: $$ (S_j h ) (z) = z_j h(z) = z_j h (z_1 , ... , z_d) ; \quad \quad 1 \leq j \leq d, \quad h \in H^2 _d. $$

The several-variable \emph{Schur class}, $\scr{S} _d = \scr{S} _d (\C )$, is the closed unit ball of this multiplier algebra. More generally the operator-valued Schur classes are the closed unit balls of the multipliers between vector-valued Drury-Arveson spaces:
$$ \scr{S} _d (\H _1, \H_2 ) := [ \mr{Mult} (H^2 _d \otimes \H_1 , H^2 _d \otimes \H _2 ) ] _1.$$ We will focus on the `square' case where $\H _1 = \H _2 = \H$:
$$ \scr{S} _d (\H) := \scr{S} _d (\H , \H) = [ H^\infty _d \otimes \L (\H ) ] _1, $$ our results can be easily extended to the general rectangular setting (the rectangular Schur classes can be embedded in square Schur classes by adding rows or columns of zeroes).

Given any $b \in \scr{S} _d (\H)$ (or more generally any $b \in \scr{S} _d (\H _1 , \H _2 ) = \left[ H^\infty _d \otimes \L (\H _1 , \H _2 ) \right] _1$), one can construct the positive \emph{deBranges-Rovnyak kernel},
$$ k^b (z,w) := \frac{ I - b(z) b(w) ^*}{1-zw^*}; \quad \quad z,w \in \B^d, $$ and the associated \emph{deBranges-Rovnyak RKHS}, $\scr{H} (b)$. By standard RKHS theory, these spaces are always contractively contained in $H^2 _d \otimes \H$.

It will often be convenient to view $H^2 _d$ as symmetric or bosonic Fock space over $\C ^d$ \cite[Section 4.5]{Sha2013}: First recall that the full Fock space over $\C ^d$, $F^2 _d$, is the direct sum of all tensor powers of $\C ^d$:
\ba F^2 _d & := & \C \oplus \left( \C ^d \otimes \C ^d \right) \oplus \left( \C ^d \otimes \C ^d \otimes \C ^d \right) \oplus \cdots \nn \\
& =& \bigoplus _{k=0} ^\infty \left( \C ^d \right) ^{k \cdot \otimes }. \nn \ea Fix an orthonormal basis $e_1, ... , e_d$ of $\C ^d$. The left creation
operators $L_1, ..., L_d$ are the operators which act as tensoring on the left by these basis vectors:
$$ L_k f := e_k \otimes f; \quad \quad f \in F^2 _d, $$ and similarly the right creation operators $R_k; \ 1\leq k \leq d$ are defined by tensoring on the right $$ R_k f := f \otimes e_k. $$ The left and right free shifts are the row operators $L := (L_1 , ... , L_d)$ and $R := (R_1, ... , R _d )$ which map
$F^2 _d \otimes \C ^d$ into $F^2 _d$. Both $L, R$ are in fact row isometries: $L ^* L = I_{F^2} \otimes I_d = R^* R$.  The orthogonal complement of the range of $L$ or $R$ is the vacuum vector $1$ which spans the the subspace $\C =: (\C^d) ^{0\cdot \otimes} \subset F^2 _d$. A canonical orthonormal basis for $F^2 _d$ is then $\{ e_\alpha \} _{\alpha \in \F ^d}$ where $e_\alpha = L^\alpha 1 = R^\alpha 1$ and $\F ^d$ is the free unital semigroup or monoid on $d$ letters.

Recall here that the free semigroup, $\F^d$, on $d \in \N$ letters, is the multiplicative semigroup of all finite products or \emph{words} in
the $d$ letters $\{1, ... , d \}$. That is, given words $\alpha := i_1 ... i_n$, $\beta := j_1 ... j_m$, $i_k, j_l \in \{1 , ... , d \}; \ 1\leq k \leq n, \ 1 \leq l \leq m$, their product $\alpha \beta $ is defined by concatenation:
$$ \alpha \beta = i_1 ... i_nj_1 ... j_m, $$ and the unit is the empty word, $\emptyset$, containing no letters. Given $\alpha = i_1 \cdots i_n$, we use the standard notation $|\alpha | = n$ for the length of the word $\alpha$.

For any permutation $\sigma$ on $n$ letters, one can define a unitary operator $U_\sigma$ on $(\C ^d) \otimes \C ^n \subset F^2 _d$ by
$$ U_\sigma (u_1 \otimes u_2 \otimes \cdots \otimes u_n ) := u_{\sigma (1) } \otimes \cdots \otimes u_{\sigma (n)}; \quad \quad u_k \in \C ^d. $$ This defines a representation, $\pi _n : \mr{Sym} (n) \rightarrow \L ( F^2 _d )$
of the symmetric or permutation group $\mr{Sym} (n)$ on $n$ letters.
The $n$th symmetric tensor product of $\C ^d$, $$ ( \C ^d ) ^n  = \frac{ \C ^d \otimes \C ^n }{\mr{Sym} (n)} \subset \C ^d \otimes \C ^n, $$ is (defined to be) the subspace of all common fixed points of the unitaries $U_\sigma$. The symmetric Fock space, $\mr{Sym} (F^2 _d )$ (we will shortly identify this with $H^2 _d$) is then the direct sum of all symmetric tensor products:
$$ \mr{Sym} (F^2 _d ) := \bigoplus _{n=0} ^{\infty} (\C ^d )^n. $$

Let $\N ^d$ be the unital additive semigroup or monoid of $d$-tuples of non-negative integers. By the universality property of the free unital semigroup $\F ^d$, there is a unital semigroup epimorphism $\la : (\F ^d , \cdot ) \rightarrow (\N ^d , +)$, the \emph{letter counting map} which sends a given word $\alpha = i_1 \cdots i_n \in \F ^d $ to $\mbf{n} = (n_1, ... ,n_d) \in  \N ^d$ where $n_k$ is the number of times the letter $k$ appears in the word $\alpha$.  For any $\n \in \N ^d$, we define the symmetric monomial
$$ L^\n := \sum _{\alpha \in \F ^d;  \  \la (\alpha ) = \n } L ^\alpha, $$ and it is then not difficult to verify that $\{ e_\n := L^\n 1 = R ^\n 1 \}$ is an orthogonal basis for $\mr{Sym} (F^2 _d )$ such that
$$ \ip{L^\n 1}{L^\m 1 } _{F^2} = \delta _{\n , \m} \frac{ | \n | !}{\n !}. $$ Here, and throughout, we use the standard notations $| \n | := n_1 + ... + n_d$ for $\n \in \N ^d$, and $\n ! := n_1 ! \cdots n_d !$. As shown in, \emph{e.g} \cite{Jur2014AC},
$$ \mc{S} _d := \bigvee _{\n \in \N ^d } L^\n = \bigvee _{z \in \B ^d} (I -L z^* ) ^{-1}, $$ where $\bigvee$ denotes norm-closed linear span and
$Lz ^* := L_1 \ov{z_1} + ... + L_d \ov{z_d}$. It follows that
$$ \mr{Sym} (F^2 _d ) = \bigvee (I -L z^* ) ^{-1} 1, $$ and it is easily verified that the map
$$ (I - Lz ^* ) ^{-1} 1 \mapsto k_z, $$ is an onto isometry which sends $e _\n = L^\n 1$ to $z^\n \frac{|\n | !}{\n !} \in H^2 _d$, where $z^\n := z_1 ^{n_1}\cdots z_d ^{n_d}$. For the remainder of the paper we will identify these two spaces and simply write $H^2 _d \subset F^2 _d$ for symmetric Fock space.

\section{Free Formal reproducing kernel Hilbert spaces}

It will be useful to review the theory of free or non-commutative (NC) formal reproducing kernel Hilbert spaces (RKHS), as introduced in \cite{Ball2003rkhs,Ball2006Fock}. This will allow us to define left and right free analogues of the commutative several-variable deBranges-Rovnyak spaces
$\scr{H} (b)$ associated to any $b \in \scr{S} _d (\H)$. If $B$ is a left or right free lift of $b$, we will see that there is very nice relationship and natural maps between the corresponding free and commutative deBranges-Rovnyak spaces. Moreover the left or right deBranges-Rovnyak space of $B$ will have a structure which is formally very similar to a commutative deBranges-Rovnyak RKHS, and it will be fruitful to exploit this analogy with the commutative setting to obtain a non-commutative or `free' extension of the Aleksandrov-Clark theory for the abelian Schur class $\scr{S} _d (\H)$ developed in \cite{Jur2014AC,JM}.

Any formal RKHS in the sense of \cite{Ball2003rkhs} is essentially a classical RKHS on a finitely generated unital semigroup (or monoid), $\mathbf{M} ^d$ (with $d$ generators), where the reproducing kernel $K _{\varphi, \vartheta};  \ \varphi, \vartheta \in \mbf{M} ^d$ is viewed as the formal power series coefficients of a `formal reproducing kernel' in two formal variables. The key difference between classical RKHS theory over finitely generated monoids and formal RKHS theory is the shift in focus from multipliers to \emph{formal multipliers}: Given a discrete RKHS, $\H (K _{\varphi , \vartheta})$, of functions on a finitely generated monoid $\mbf{M} ^d$, instead of the usual multiplier algebra, one can consider the \emph{convolution algebra} of bounded convolution operators from $\H (K _{\varphi, \vartheta})$ into itself. If one identifies elements of the discrete RKHS $\H (K _{\varphi , \vartheta})$ with formal power series indexed by $\mbf{M} ^d$, this convolution algebra can be viewed as the \emph{formal multiplier algebra}, the algebra of formal power series which multiply the formal RKHS into itself. We will primarily be interested in the case of $\F ^d$, the free unital semigroup on $d$ generators (the universal monoid on $d$ generators).

\subsection{Formal RKHS over $\F ^d$}

Let $\H$ be an auxiliary `coefficient' Hilbert space. We will call any positive kernel function $c : \F ^d \times \F ^d \rightarrow \L (\H )$ an operator-valued \emph{free coefficient kernel}, and the associated formal power series $$ K (Z,W) := \sum _{\alpha , \beta \in \F ^d  } Z^\alpha (W^*)^{\beta ^T} c (\alpha , \beta )  \in \L (\H ) \{ Z , W^* \}, $$ is called a (positive) \emph{free kernel}. Here $Z = (Z_1 , ... , Z_d)$ and $W^* = (W_1 ^* , ..., W_d ^* )$ are two sets of free (non-commuting) variables and given a word $\alpha = i_1 i_2 ... i_n ; \quad i_k \in \{ 1 , ... , d \},$ the transpose of $\alpha$ is $\alpha ^T = i_n ... i_1$. In the above, we have also used the notation
$\L (\H ) \{ Z , W^* \}$ for the linear space of all formal power series in the free variables $Z, W^*$ with coefficients in $\L (\H )$, and we will write $K_{\alpha , \beta } := c (\alpha , \beta )$ for the coefficient kernel corresponding to a free kernel $K$.

A Hilbert space $\K$ is called a \emph{free RKHS} of $\H$-valued functions if any $F$ in $\K$ can be written as a formal power series
$$ F (Z)  = \sum _{\alpha \in \F ^d  } Z^\alpha F_\alpha  \in \H \{ Z \}; \quad \quad F_\alpha \in \H $$ in the free variable $Z$, and if for each $\alpha \in \F ^d $ the linear \emph{coefficient evaluation map} $$ K_\alpha ^* \in \L (\K , \H ), $$ defined by
$$ K_\alpha ^* F := F_\alpha, $$ is bounded.  We write $K_\alpha \in \L (\H , \K)$ for the Hilbert space adjoint of this linear map.  The free coefficient kernel for $\K$ is defined by the coefficients
$$ K_{\alpha , \beta } := K_\alpha ^* K_\beta \in \L (\H). $$ The expression,
$$ K(Z,W) := \sum _{\alpha, \beta \in \F ^d} Z^\alpha (W ^* )^{\beta ^T} K_{\alpha ,\beta }, $$ defines a positive free kernel, called the \emph{free reproducing kernel} of $\K$. That is, $K_{\alpha, \beta}$ is necessarily a positive kernel function in the classical sense on the discrete set $\F ^d$, and classical RKHS theory implies that there is a bijection between free kernel functions $K$  and free RKHS, $\H (K)$, of free formal power series with free reproducing kernels $K$.  We write $\K := \mc{F} (K)$ if $\K$ is a free RKHS with free kernel $K$. Note that for any $\beta \in \F ^d$,
$$ K_\beta (Z) = \sum _{\alpha \in \F ^d } Z^\alpha K_{ \alpha , \beta}  \in  \L (\H , \mc{F} (\K) ) \{ Z \}. $$

If $\mc{F} (K)$ is a free RKHS of $\H$-valued functions (whose elements can be written as free formal power series), we can define formal \emph{point evaluation maps}:
$$ K_W ^* := \sum _{\alpha \in \F ^d } W  ^\alpha K_\alpha  ^* \in \L (\mc{F} (K)  , \H ) \{ W \}. $$
Also define the formal adjoint of free power series termwise as:
\be \label{freeadj} K_W := (K_W ^*)^* := \sum _{\alpha \in \F ^d } (W^*) ^{\alpha ^T} K_\alpha   \in \L (\H , \mc{F} (K ) ) \{ W ^* \}. \ee

Then for any $F \in \mc{F} (K)$, $K_W ^* F$ is defined termwise as
\ba K_W ^* F &= & \sum _{\alpha  \in \F ^d } W^\alpha K_\alpha ^* F   \nn \\
& = & \sum _{\alpha  \in \F ^d } W^\alpha F _\alpha \nn \\
& = & F (W), \nn \ea and
\ba K_Z ^* K_W & = & \sum _{\alpha } \sum _{\beta } Z^\alpha (W^* ) ^{\beta ^T} K_\alpha ^* K_\beta  \nn \\
& = &    \sum _{\alpha } \sum _{\beta } Z^\alpha (W^* ) ^{\beta ^T} K_{ \alpha , \beta } \nn \\
& = & K (Z,W). \ea These properties are formally analogous to properties of classical RKHS, and in many calculations it will be easier to
work with the formal point evaluation maps $K_W \in \L (\H , \mc{F} (K) ) \{ W^* \}$ in place of the bounded linear coefficient evaluation maps $K_\alpha \in \L (\H , \mc{F} (K) )$.

\begin{remark} \label{freeRKHS}
Up to this point, no new theory has been introduced. Under the identification of elements of a free RKHS of formal power series with their power series coefficients indexed by the free monoid $\F ^d$, the concept of a free RKHS is equivalent to that of a classical RKHS over $\F ^d$.
\end{remark}

As in classical RKHS theory, given any free RKHS $\mc{F} (K)$ of $\H-$valued free power series, there are naturally associated (formal) free (left and right) multiplier algebras. The noncommutativity of the unital free semigroup $(\F ^d , \cdot )$ leads to two different notions of formal multipliers: left multipliers and right multipliers (equivalently left or right convolution operators).

A bounded linear map $M : \mc{F} (k ) \rightarrow \mc{F} (K)$ between two free RKHS of $\H$ and $\J$-valued functions, respectively, is called a \emph{left free multiplier} if there is a formal power series
$$ M(Z) := \sum _{\alpha \in \F ^d } Z^\alpha M_\alpha \in \L (\H , \J ) \{ Z \}, $$ so that $M$ acts as left multiplication by $M(Z)$: For any $F \in \mc{F} (k)$,
\ba  (MF) (Z) & = &  M(Z) F(Z) = \left( \sum _{\alpha \in \F ^d } Z^\alpha M_\alpha \right) \left( \sum _\beta Z^\beta F_\beta \right)  \nn \\
& := & \sum _{\alpha, \beta } Z^{\alpha \beta } M_\alpha F_\beta \nn \\
& = & \sum _\ga Z^\ga \left( \sum _{\alpha \beta = \ga } M_\alpha F_\beta \right). \nn \ea  Similarly it is called a right multiplier if it acts as right multiplication by $M(Z)$:
$$ (MF) (Z) = M(Z) \bullet_R F (Z). $$ The above right product of formal power series is defined as
\ba M(Z) \bullet_R  F (Z) &=& \left( \sum _\alpha Z^\alpha M _\alpha \right) \bullet_R \left( \sum _\beta Z^\beta F_\beta \right) \nn \\
& := & \sum _{\alpha , \beta} Z^{\beta \alpha } M_\alpha F_\beta \nn \\
& = & \sum _{\ga } Z^\ga \left( \sum _{\beta \alpha = \ga } M_\alpha F_\beta \right). \nn \ea
The above shows that left and right formal free multiplication can be defined in terms of (left or right) convolution of the coefficients:

\begin{lemma}
    If a bounded linear $M : \mc{F} (k ) \rightarrow \mc{F} (K)$ acts as left or right multiplication by $M(Z) = \sum _\alpha Z^\alpha M_\alpha \in \L (\H , \J) \{ Z \}$  then
$$ M^* K_\alpha = \sum _{\beta \cdot \ga = \alpha } k_\ga  M_\beta ^* \in \L (\H ; \mc{F} (k) ), $$ or
$$ M^* K_\alpha  = \sum _{\ga \cdot \beta = \alpha } k_\ga  M_\beta ^* \in \L (\H ; \mc{F} (k) ), \quad \quad \mbox{respectively.} $$
\end{lemma}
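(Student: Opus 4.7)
The plan is to derive both identities by a single adjoint computation that reduces the claim to the convolutional definition of formal multiplication together with the identification of $K_\alpha$ and $k_\ga$ as the adjoints of coefficient evaluation on $\mc{F}(K)$ and $\mc{F}(k)$ respectively.

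First I would fix arbitrary $h \in \J$ and $F \in \mc{F}(k)$ and pair:
$$ \ip{M^* K_\alpha h}{F} _{\mc{F}(k)} = \ip{K_\alpha h}{MF} _{\mc{F}(K)} = \ip{h}{(MF) _\alpha } _{\J}, $$
using the reproducing property $K _\alpha ^* G = G _\alpha$ for $G \in \mc{F}(K)$ in the last equality.  The next step is to read off the coefficient $(MF) _\alpha$ from the free convolution formula already displayed in the excerpt: in the left-multiplier case $(MF) _\alpha = \sum _{\beta \ga = \alpha} M _\beta F _\ga$, while in the right-multiplier case $(MF) _\alpha = \sum _{\ga \beta = \alpha} M _\beta F _\ga$.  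Both sums are finite, since any word of length $|\alpha|$ in $\F ^d$ admits only $|\alpha| + 1$ ordered two-letter factorizations, so no convergence issue arises.

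Substituting and pushing $M _\beta ^*$ onto the left entry and $k _\ga$ onto the right via
$$ \ip{h}{M _\beta F _\ga} _{\J} = \ip{M _\beta ^* h}{k _\ga ^* F} _{\H} = \ip{k _\ga M _\beta ^* h}{F} _{\mc{F}(k)}, $$
and then summing over the appropriate factorizations of $\alpha$, I obtain
$$ \ip{M^* K _\alpha h}{F} _{\mc{F}(k)} = \Big\langle \sum k _\ga M _\beta ^* h, \, F \Big\rangle _{\mc{F}(k)}, $$
with the sum over $\beta \ga = \alpha$ in the left case and over $\ga \beta = \alpha$ in the right case.  Since $h \in \J$ and $F \in \mc{F}(k)$ were arbitrary, the two stated identities follow.

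No genuine obstacle arises: the argument is a single application of the adjoint identity combined with the definition of free convolution.  The only point requiring care is the bookkeeping of the order of factorization in the noncommutative monoid $\F ^d$, which is precisely what distinguishes the left and right multiplier formulas.
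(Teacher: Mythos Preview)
Your proof is correct. The paper does not give an explicit proof of this lemma; it is stated immediately after the convolution formulas for $(MF)(Z)$ as a direct consequence (``The above shows that left and right formal free multiplication can be defined in terms of (left or right) convolution of the coefficients''), so your argument is exactly the routine adjoint computation the paper leaves to the reader.
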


The restatement of the above in terms of the formal point evaluation maps is again more formally analogous to the classical theory:

\begin{lemma}
    If $M : \mc{F} (k) \rightarrow \mc{F} (K)$ is a bounded left multiplier then
$$ M^* K_Z = k_Z M(Z) ^* \in \L (\H , \mc{F} (k) ) \{ Z^* \}.  $$ If it is a bounded right multiplier then
$$ M^* K_Z = k_Z \bullet_R M(Z) ^*  \in \L (\H , \mc{F} (k) ) \{ Z^* \}. $$
\end{lemma}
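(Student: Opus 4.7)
The plan is to derive both identities by substituting the coefficient-level formula of the preceding lemma directly into the formal power series definition of $K_Z$ from (\ref{freeadj}), and then to recognize the resulting double sum as the appropriate (left or right) formal product. This is essentially a bookkeeping exercise in word concatenation and transposition, so it is very short once the definitions are unwound.

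First I would begin with $K_Z = \sum_{\alpha \in \F ^d} (Z^*)^{\alpha ^T} K_\alpha$ and apply $M^*$ termwise to obtain $M^* K_Z = \sum_\alpha (Z^*)^{\alpha ^T} M^* K_\alpha$. In the left multiplier case, the preceding lemma gives $M^* K_\alpha = \sum_{\beta \cdot \ga = \alpha} k_\ga M_\beta ^*$, and reindexing the double sum by the pair $(\beta, \ga)$ rather than by $\alpha = \beta \ga$ yields
\[
    M^* K_Z \;=\; \sum_{\beta, \ga \in \F ^d} (Z^*)^{(\beta \ga )^T} k_\ga M_\beta ^*.
\]
Since $(\beta \ga )^T = \ga ^T \beta ^T$, this rewrites as $\sum_{\beta,\ga} (Z^*)^{\ga ^T} (Z^*)^{\beta ^T} k_\ga M_\beta ^*$, which is exactly the formal product $\bigl(\sum_\ga (Z^*)^{\ga ^T} k_\ga\bigr)\bigl(\sum_\beta (Z^*)^{\beta ^T} M_\beta ^*\bigr) = k_Z \, M(Z)^*$, provided we define the formal adjoint of $M(Z) = \sum_\alpha Z^\alpha M_\alpha$ termwise as $M(Z)^* := \sum_\alpha (Z^*)^{\alpha ^T} M_\alpha ^*$, in direct analogy with (\ref{freeadj}).

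For the right multiplier case I would repeat the same substitution, using instead $M^* K_\alpha = \sum_{\ga \cdot \beta = \alpha} k_\ga M_\beta ^*$. The only change is that words now concatenate in the order $\alpha = \ga \beta$, so after transposition the powers of $Z^*$ appear reversed: $(Z^*)^{(\ga \beta)^T} = (Z^*)^{\beta ^T \ga ^T}$. Comparing this with the definition of $\bullet_R$ given in the paragraph preceding the previous lemma shows the resulting double sum is exactly $k_Z \bullet_R M(Z)^*$, completing the proof. I do not anticipate any real obstacle: the only subtlety is keeping left/right conventions and transpositions straight, and ensuring the formal adjoint of $M(Z)$ is interpreted consistently with (\ref{freeadj}). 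No convergence issues arise because everything is happening at the level of formal power series indexed by $\F ^d$.
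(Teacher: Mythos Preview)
Your proposal is correct and is exactly the intended argument: the paper gives no explicit proof of this lemma, presenting it simply as ``the restatement of the above in terms of the formal point evaluation maps,'' and your termwise substitution of the coefficient formula from the preceding lemma into the defining series for $K_Z$ is precisely how one verifies that restatement. The bookkeeping with transposes and the left/right concatenation conventions is handled correctly.
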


Analogues of classical RKHS results include:

\begin{thm}
    A formal power series $M(Z) = \sum _\alpha Z^\alpha M_\alpha  \in \L (\H , \J ) \{ Z \}$ defines a bounded left free multiplier from
$\mc{F} (k) $ into $\mc{F} (K)$ if and only if there is a $B > 0 $ so that
$$ \sum _{\beta ' \cdot \ga ' = \alpha ' } \sum _{ \beta \cdot \ga = \alpha } M_\beta k_{\ga , \ga ' } M_{\beta '} ^* \leq B \ K_{\alpha , \alpha '}, $$
as positive free coefficient kernels.

In particular, $\mc{F} (k)$ is contractively contained in $\mc{F} (K)$ if and only if $K_{\alpha , \beta} - k_{\alpha ,\beta}$ is a positive free coefficient kernel. Equality holds in the above with $B=1$ if and only if $M$ is a co-isometric left multiplier.
\end{thm}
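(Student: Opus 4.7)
The plan is to reduce the claim to the standard operator-theoretic characterization of multipliers between (discrete) RKHS, translated into the free coefficient-kernel language via the preceding lemma on the action of $M^*$ on point-evaluation operators. Both directions then follow from the same quadratic identity connecting $\|M^*\sum K_{\alpha_i}h_i\|^2$ to the double-sum quantity in the theorem.

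The central computation is as follows. Using $M^*K_\alpha = \sum_{\beta\gamma=\alpha} k_\gamma M_\beta^*$ (valid for any bounded $M$ acting as left multiplication by $M(Z)$, by the preceding lemma), a direct expansion gives, for any finite $\{\alpha_i\}\subset\F^d$ and $\{h_i\}\subset\J$,
\begin{align*}
\Bigl\|M^*\sum_i K_{\alpha_i}h_i\Bigr\|^2_{\mc{F}(k)}
&= \sum_{i,j}\Bigl\langle h_i,\ \sum_{\beta\gamma=\alpha_i}\sum_{\beta'\gamma'=\alpha_j} M_\beta k_{\gamma,\gamma'} M_{\beta'}^* h_j\Bigr\rangle_\J, \\
\Bigl\|\sum_i K_{\alpha_i}h_i\Bigr\|^2 &= \sum_{i,j}\langle h_i, K_{\alpha_i,\alpha_j} h_j\rangle_\J.
\end{align*}
Since $\{K_\alpha h\,:\,\alpha\in\F^d,\,h\in\J\}$ has dense linear span in $\mc{F}(K)$, the bound $\|M\|^2\leq B$ (equivalently $MM^*\leq B\,I_{\mc{F}(K)}$) is exactly equivalent to the quadratic inequality appearing in the theorem, which settles the forward implication.

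For the reverse direction I would not yet assume $M$ is bounded. Instead I use the formula above as a \emph{definition}, setting $T_0 K_\alpha h := \sum_{\beta\gamma=\alpha} k_\gamma M_\beta^* h$ and extending linearly to $\mathrm{span}\{K_\alpha h\}\subseteq\mc{F}(K)$. The hypothesized coefficient-kernel inequality, combined with the first identity above, simultaneously shows that $T_0$ is well-defined (vanishing on vectors in the span that are already zero in $\mc{F}(K)$) and that $\|T_0\|^2\leq B$. Continuously extending gives a bounded $T:\mc{F}(K)\to\mc{F}(k)$, and $\wt{M}:=T^*$ is then shown to act as left multiplication by the given formal series via
\[ \langle(\wt{M}F)_\alpha,h\rangle_\J = \langle F, TK_\alpha h\rangle = \sum_{\beta\gamma=\alpha}\langle M_\beta F_\gamma,h\rangle_\J, \]
so that $(\wt{M}F)_\alpha = \sum_{\beta\gamma=\alpha} M_\beta F_\gamma = (M(Z)F(Z))_\alpha$. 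The only real subtlety is this well-definedness step, and it is handled automatically by the norm estimate coming from the kernel inequality.

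The corollaries then drop out immediately. Taking $M(Z)=I_\H$ (so $M_\emptyset=I$ and $M_\alpha=0$ otherwise) collapses the inner double sum to $k_{\alpha,\alpha'}$; the inequality at $B=1$ becomes $K_{\alpha,\alpha'} - k_{\alpha,\alpha'}\geq 0$ as free coefficient kernels, which is the criterion for contractive containment $\mc{F}(k)\subseteq\mc{F}(K)$. Equality at $B=1$ in the coefficient inequality is, by the same identity, equivalent to $\|M^*v\|=\|v\|$ on the dense span, hence to $MM^*=I_{\mc{F}(K)}$, i.e.\ $M$ being a co-isometric left multiplier.
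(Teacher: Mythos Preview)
The paper does not actually supply a proof of this theorem; it is stated as one of the ``analogues of classical RKHS results,'' relying implicitly on Remark~\ref{freeRKHS} (a free RKHS is just a classical RKHS on the discrete set $\F^d$) and the references \cite{Ball2003rkhs,Ball2006Fock}. Your argument is correct and is precisely the standard classical-RKHS proof specialized to the coefficient-evaluation maps $K_\alpha$: the quadratic identity you write down is exactly the statement $\|M^*v\|^2 \le B\|v\|^2$ on the dense span $\bigvee K_\alpha\J$, and the converse direction via defining $T_0$ on this span and checking well-definedness from the norm bound is the usual Aronszajn-type argument. The two corollaries follow as you say. In short, your proof is what the paper is tacitly invoking, and there is nothing to compare beyond that.
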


The same statements hold for right free multipliers if one reverses the order of the products of the free semigroup elements $\beta, \ga$ and $\beta ' , \ga '$. Again, this can be restated in terms of formal point evaluation maps and free kernels:

\begin{thm}
 A formal power series $M(Z) = \sum _\alpha Z^\alpha M_\alpha  \in \L (\H , \J ) \{ Z \}$ defines a bounded left free multiplier from
$\mc{F} (k) $ into $\mc{F} (K)$ if and only if there is a $B > 0 $ so that
$$ M(Z) k (Z,W) M(W) ^* \leq B \ K(Z,W), $$ as free formal positive kernels.

Similarly it defines a bounded right free multiplier if and only if
$$ M(Z) \bullet_R k(Z,W) \bullet_R M(W) ^* \leq B \ K(Z,W). $$

In either case (right or left) multiplication by $M(Z)$ is a co-isometry if and only if equality holds with $B =1$ and $\mc{F} (k)$ is contractively contained in $\mc{F} (K)$ if and only if
$K - k$ is a positive free kernel.
\end{thm}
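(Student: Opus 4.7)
The plan is to deduce this theorem directly from the preceding theorem by interpreting the formal free kernel inequality coefficient-by-coefficient and matching terms. A free formal positive kernel $K(Z,W) = \sum_{\alpha,\beta} Z^\alpha (W^*)^{\beta^T} K_{\alpha,\beta}$ is positive precisely when its coefficient kernel $(\alpha,\beta) \mapsto K_{\alpha,\beta}$ is positive in the classical sense on $\F^d$, and an inequality between two free formal positive kernels means the coefficient-wise inequality. So it suffices to show that the formal expressions $M(Z) k(Z,W) M(W)^*$ and $M(Z) \bullet_R k(Z,W) \bullet_R M(W)^*$ have the coefficients that appear on the left-hand side of the two inequalities in the preceding theorem.

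For the left multiplier case, I would expand
\[
 M(Z)\, k(Z,W)\, M(W)^* = \left( \sum_\beta Z^\beta M_\beta \right) \left( \sum_{\ga,\ga'} Z^\ga (W^*)^{\ga'^T} k_{\ga,\ga'} \right) \left( \sum_{\beta'} (W^*)^{\beta'^T} M_{\beta'}^* \right),
\]
and use the convention $Z^\beta Z^\ga = Z^{\beta\ga}$ on the $Z$-side together with $(W^*)^{\ga'^T}(W^*)^{\beta'^T} = (W^*)^{\ga'^T \beta'^T} = (W^*)^{(\beta'\ga')^T}$ on the $W$-side. Collecting the coefficient of $Z^\alpha (W^*)^{\alpha'^T}$ I obtain exactly $\sum_{\beta\ga = \alpha}\sum_{\beta'\ga' = \alpha'} M_\beta\, k_{\ga,\ga'}\, M_{\beta'}^*$, which matches the previous theorem. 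Thus the formal kernel inequality $M(Z)k(Z,W)M(W)^* \leq B\, K(Z,W)$ is literally the same as the coefficient inequality, so the equivalence with boundedness of $M$ as a left multiplier follows from the previous theorem.

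For the right multiplier case I would do the same expansion, but using the rule $Z^\beta \bullet_R Z^\ga = Z^{\ga\beta}$. This reverses the order of concatenation on the $Z$-side, and a completely analogous bookkeeping on the $W^*$-side yields the coefficient expression with $\ga\cdot\beta = \alpha$ and $\ga'\cdot\beta' = \alpha'$, i.e.\ precisely the right-multiplier version of the coefficient inequality from the previous theorem. The two concluding statements are then immediate specializations: the co-isometry case corresponds to equality with $B=1$, and the contractive containment $\mc{F}(k) \subseteq \mc{F}(K)$ corresponds to taking $M(Z) = I$ (a co-isometric left \emph{and} right multiplier with $B=1$), whose defining inequality reduces to $k(Z,W) \leq K(Z,W)$, equivalent to positivity of the coefficient kernel $K_{\alpha,\beta} - k_{\alpha,\beta}$.

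The only nontrivial point in this argument is bookkeeping: getting the concatenation directions and the transposes on the $W$-side right so that the coefficient of $Z^\alpha (W^*)^{\alpha'^T}$ in the formal product agrees precisely with the double sum in the preceding theorem. Once the transpose convention in \eqref{freeadj} is applied carefully this is a finite manipulation with no analytic content, so I expect the entire proof to be short.
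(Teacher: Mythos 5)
Your approach is correct and is exactly what the paper intends: the theorem is presented as a reformulation of the preceding coefficient-level statement, with no separate proof given, and the content is precisely the bookkeeping you describe — expand $M(Z)\,k(Z,W)\,M(W)^*$ (or the $\bullet_R$ version), use $(\beta'\gamma')^T = \gamma'^T\beta'^T$ to sort out the $W^*$-side concatenation, and read off that the coefficient of $Z^\alpha (W^*)^{\alpha'^T}$ is $\sum_{\beta\gamma=\alpha}\sum_{\beta'\gamma'=\alpha'} M_\beta k_{\gamma,\gamma'} M_{\beta'}^*$ (left) or the reversed-product version (right). One small slip: in your final sentence you describe $M(Z)=I$ as ``a co-isometric left and right multiplier with $B=1$''; for the contractive-containment criterion you only want $M=I$ to be a \emph{contractive} multiplier with $B=1$ (giving $k \leq K$), not a co-isometry (which would force $k = K$). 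That parenthetical doesn't affect the argument, but the label should be ``contractive,'' reserving ``co-isometric'' for the separate assertion about equality with $B=1$.
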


Given two free RKHS, $\mc{F} (k), \mc{F} (K)$, we define the left and right \emph{free multiplier spaces}, $\mr{Mult} ^L (\mc{F} (k) , \mc{F} (K) )$, $\mr{Mult} ^R (\mc{F} (k), \mc{F} (K ) )$, as the spaces
of all left and right free multipliers of $\mc{F} (k)$ into $\mc{F} (K)$.  As in the classical, commutative theory, any left (right) free multiplier, $F$, defines a bounded linear multiplication map, $M_F ^L : \mc{F} (k) \rightarrow \mc{F} (K)$ (or $M_F ^R$ in the right case), and under this identification, these multiplier spaces are $WOT$-closed. In the case where $\mc{F} (K) = \mc{F} (k)$, we write $\mr{Mult} ^L (\mc{F} (K)) := \mr{Mult} ^L (\mc{F} (K), \mc{F} (K) )$, for the unital
\emph{free left multiplier algebra} of $\mc{F} (K)$ (and similarly for the free right multiplier algebra). As observed above, the free left and right multiplier algebras of a free RKHS $\mc{F} (K)$ can be equivalently viewed as (what could be called) the free left and right \emph{convolution algebras} of the discrete
classical RKHS $\mc{H} (K_{\alpha , \beta} )$ corresponding to the free coefficient kernel $K _{\alpha , \beta}$ on $\F ^d \times \F ^d$.

Our main motivation for considering the theory of free formal RKHS is to apply it to the setting of the full Fock space, $F^2 _d$, over $\C ^d$. The example below (from \cite{Ball2003rkhs}) shows that the full Fock space can be naturally viewed as a free RKHS, the \emph{free Hardy space} over $d$ free variables. The WOT-closed unital operator algebras generated by the left and right creation operators, \emph{i.e.} the left and right free Toeplitz algebras, are then naturally identified with the left and right free multiplier algebras of this free RKHS.
\begin{eg}{ The full Fock space and the free Szeg\"{o} kernel.} \label{Freeg}

Any element $f \in F^2 _d$ has the form
$$ f = \sum _{\alpha \in \F ^d } f_\alpha L^\alpha 1; \quad \quad f_\alpha \in \C, $$ where $1$ denotes the vacuum vector and
$L$ is the left creation isometry. We can identify $f$ with the formal power series
$$ f(Z) := \sum _\alpha Z^\alpha f_\alpha. $$  Since $ f_\alpha = \ip{L^\alpha 1}{f} _{F^2},$ the coefficient evaluation vector $\hat{k} _\alpha$ is simply $\hat{k} _\alpha (Z) = Z^\alpha,$ and the free coefficient kernel is:
$$ \hat{k} _{\alpha , \beta} := \ip{\hat{k} _\alpha}{\hat{k}_\beta  } _{F^2} = \delta _{\alpha, \beta}. $$ The corresponding free kernel is then:
\ba \hat{k} (Z,W) & = & \sum _{\alpha , \beta \in \F ^d}  Z^\alpha (W^*)^{\beta ^T} \hat{k} _{\alpha, \beta} \nn \\
& = & \sum _{\alpha , \beta \in \F ^d}  Z^\alpha (W^*)^{\beta ^T} \delta _{\alpha ,\beta} \nn \\
& = & \sum _{\alpha \in \F ^d } Z^\alpha (W^*)^{\alpha ^T}. \nn \ea

This is a free analogue of the Szeg\"{o} kernel for Drury-Arveson space:
Indeed, replacing $Z, W^*$ with the commutative variables $z,w^* \in \B ^d$ yields:
\ba \hat{k} (z,w) & = & \sum _{\alpha \in \F ^d} z^\alpha (w^*) ^\beta \nn \\
& = & \sum _{\n \in \N ^d } \frac{| \n | ! }{\n !} z^\n (w^*) ^\n \nn \\
& = & \frac{1}{1-zw^*} \nn \\
& = & k (z,w), \nn \ea the Szeg\"{o} kernel for Drury-Arveson space. It makes sense to view $F^2 _d$ as the `free' Drury-Arveson space or free several-variable Hardy space.
\end{eg}

\subsection{Free deBranges-Rovnyak spaces}

Viewing $F^2_d$ or vector-valued $F^2 _d \otimes \H$ as a free RKHS, the left and right free Toeplitz algebras, $L^\infty _d$ and $R^\infty _d$, \emph{i.e.} the unital WOT-closed algebras generated by the left and right free shifts or creation operators, are naturally identified with the left and right free multiplier algebras of $F^2 _d$ \cite{Ball2003rkhs,Ball2006Fock}: $$ L^\infty _d \simeq \mr{Mult} ^L (F^2 _d) ; \quad \quad \mbox{and} \quad \quad R^\infty _d \simeq \mr{Mult} ^R (F^2 _d).$$  We will use the notation
$$ \scr{L} _d (\H_1 ,\H _2) := [ \mr{Mult} ^L ( F^2 _d \otimes \H _1 , F^2 _d \otimes \H _2 ) ] _1 = [ L^\infty _d \otimes \L (\H _1 , \H _2 )] _1, $$ and $$
\scr{R} _d (\H _1 , \H_2 ) := [ \mr{Mult} ^R ( F^2 _d \otimes \H _1 , F^2 _d \otimes \H _2 ) ] _1, $$ for the left and right (operator-valued) \emph{free Schur classes}, the closed unit balls of the left and right multipliers between vector-valued Fock spaces over $\C ^d$. Since the left and right free Toeplitz algebras $L^\infty _d$ and $R^\infty _d$ are each others commutants, the space of left multipliers $\mr{Mult} ^L (F^2 _d \otimes \H_1 , F^2 _d \otimes \H _2)$ can also be identified as the spaces of bounded linear maps which intertwine the scalar right multiplier algebras $R^\infty _d \otimes I_{\H _1}$ and $R^\infty _d \otimes I_{\H _2}$ acting on vector-valued Fock spaces. In the case where $\H _1 = \H _2 = \H$, we simply write $\scr{L} _d (\H)$ for $\scr{L} _d (\H , \H ) := [ L^\infty _d \otimes \L (\H) ]_1$.

As in the commutative setting, any element $B = B^L \in \scr{L} _d (\H )$ or $B = B^R \in \scr{R} _d (\H )$ can be used to define a positive \emph{free deBranges-Rovnyak kernel} $\hat{k} ^B$ and corresponding left or right \emph{free deBranges-Rovnyak space} $\scr{H} ^L (B )$ or $\scr{H} ^R (B)$:

\begin{eg}{ Free deBranges-Rovnyak spaces} \label{FreedBReg}

Consider vector-valued Fock space $F^2 _d \otimes \H$.
As in the commutative setting, any formal operator-valued power series $B (Z) \in \L (\H , \J ) \{ Z \}$ is the the left or right free Schur class if and only if
$$ \hat{k} ^L (Z,W ) := \hat{k} (Z,W) - B(Z) \hat{k} (Z,W) B(W) ^* \in \L (\H) \{Z, W^*\}, $$ or
$$ \hat{k} ^R (Z,W) := \hat{k} (Z,W) - B(Z) \bullet_R \hat{k} (Z,W) \bullet_R B(W) ^* \in \L (\H ) \{Z,W^*\}, $$ are free positive kernel functions, respectively, where $\hat{k}$ is the free Szeg\"{o} kernel
of $F^2 _d \otimes \H$ \cite[Theorem 3.1]{Ball2006Fock}.

The (left or right) free deBranges-Rovnyak space is then defined as $\scr{H} ^L (B) := \mc{F} ( \hat{k} ^L )$ or $\scr{H} ^R (B) := \mc{F} (\hat{k} ^R )$, depending on whether $B$ is in the left or right free operator-valued Schur class.

As in the commutative case, $\scr{H} ^R (B)$ can be defined as a complementary range space \cite{Sarason-dB}:
$$ \scr{H} ^R (B)  := \scr{M} \left( \sqrt{I _{F^2 _d \otimes \J} - M ^R _B  (M^R _B)  ^*} \right). $$ Namely, $\scr{H} ^R (B) = \ran{ \sqrt{ I - M_B ^R  (M^R _B ) ^* } }$ equipped with the inner
product that makes $ \sqrt{ I - M_B ^R (M^R _B ) ^* }$ a co-isometry onto its range: if $P$ is the orthogonal projection onto $\ker{ \sqrt{ I - M_B ^R (M^R _B ) ^* }} ^\perp$,
$$ \ip{  \sqrt{ I - M_B ^R (M^R _B ) ^* } h}{ \sqrt{ I - M_B ^R (M^R _B ) ^* } g} _B := \ip{Ph}{g} _{F^2}. $$
In the above, $M_B ^R \in \L (F^2 _d \otimes \H , F^2 _d \otimes \J)$ is
defined by right free multiplication by $B(Z)$ (assume $B$ belongs to the right Schur class). A similar statement, of course, holds if $B$ is in the left Schur class.

To see that $\scr{H}:= \scr{M} \left( \sqrt{I - M_B ^R (M_B ^R) ^* } \right)$ and $\scr{H} ^R (B)= \mc{F} (\hat{k} ^R )$ are the same space, first note that by free RKHS theory, $\scr{H} ^R (B)$ is contractively contained in
$F^2 _d \otimes \J$ since $\hat{k} \otimes I_\J  - \hat{k} ^R$ is a positive free kernel. As in \cite[Section I-3]{Sarason-dB}, $\scr{H}$ is also contractively contained in $F^2 _d \otimes \J$, and if $\hat{k}$ denotes the free (operator-valued) Szeg\"{o} kernel and $f = \sqrt{I - M^R _B  (M^R _B) ^*} g \in \scr{H}$, then
\ba \ip{h}{ f(Z)} _\H & =& \ip{\hat{k} _Z h }{ f} _{F^2} \nn \\
& = & \ip{ \sqrt{I - M^R _B (M ^R _B) ^* } \hat{k} _Z h}{g} _{F^2} \nn \\
& = & \ip{(I - M^R _B (M ^R _B) ^*) \hat{k} _Z h}{f} _{\scr{H}}. \nn \ea
This shows that $\scr{H}$ is a free RKHS with point evaluation maps
$$ K_Z := (I - M^R _B (M ^R _B) ^*) \hat{k} _Z = \hat{k} _Z - M^R _B \hat{k} _Z \bullet_R B(Z) ^*,$$ and free kernel
$$ K (Z,W) := \hat{k} (Z,W) - B(Z) \bullet_R \hat{k} (Z,W) \bullet_R B(W) ^* = \hat{k} ^R (Z, W). $$ This proves that $\scr{H} = \scr{H} ^R (B)$. Note that in the above $\hat{k} _Z \in \L ( \J , F^2 _d \otimes \J ) \{ Z^* \}$ is a formal power series with coefficients in $\L (\J , F^2 _d \otimes \J)$, and we define
the action of $M^R _B, (M^R _B) ^*$ on such formal power series (as well as the above inner products of formal power series) by linearity.
Alternatively, instead of formal manipulations with free formal power series, one can arrive at the same conclusions by repeating the above arguments with the coefficient maps.
\end{eg}

\section{Relationship to Non-commutative function theory}

Free non-commutative function theory provides an alternative and equivalent mathematical framework for defining non-commutative deBranges-Rovnyak spaces associated to the left and right free Schur classes.
In particular, there is a bijection between free RKHS $\mc{F} (K)$ with free kernels $K$, and functional non-commutative (NC) RKHS of free non-commutative (NC) functions defined on NC sets \cite[Theorem 3.20]{Ball-NC}. In this section we briefly describe the relationship between these two theories as they pertain to our program. Our presentation will follow \cite{KVV,Ball-NC}.

One inspiration for free non-commutative function theory is Popescu's free functional calculus for row contractions (and Popescu's theory of free holomorphic functions) \cite{Pop-funcalc,Pop-freeholo,Pop-freeholo2}. Recall that $\A  := \A ^L _d$ denotes the left free disk algebra, the unital operator algebra generated by the left free shift (the row isometery of left creation operators) on the full Fock space, $F^2 _d$ over $\C ^d$. Further recall that the free left multiplier algebra of $F^2 _d = \mc{F} (\hat{k} )$ is $L _d ^\infty$, the unital WOT-closed operator algebra generated by the left free shift, also called the left free Toeplitz algebra. Similarly we define operator-valued extensions of these algebras: given an auxiliary coefficient Hilbert space $\H$, we will abuse notation slightly and write $\A ^L _d \otimes \L (\H )$ and $L ^\infty _d \otimes \L (\H )$ for the operator-valued left free disk algebra, and the left free Toeplitz algebra, respectively. To be precise, we write $\A ^L _d \otimes \L (\H ), L^\infty _d \otimes \L (\H )$ in place of the norm and WOT-closure of these algebraic tensor products. These algebras are the norm, and WOT-closure, respectively, of the unital operator algebras generated by the operator-valued left free shift $L \otimes I_\H$ acting on vector-valued Fock space $F^2 _d \otimes \H$.

The operator algebras $L^\infty _d, R^\infty _d$ are unitarily equivalent via the transposition unitary $U _T : F^2 _d \rightarrow F^2 _d$: Given an orthonormal basis $\{ e_k \}$ of $\C ^d$ and corresponding left and right creation operators $L_k v = e_k \otimes v$, $R_k v = v \otimes e_k$ on $F^2 _d$, a canonical orthonormal basis for $F^2 _d$ is the set $\{ e_\alpha := L^\alpha 1 \} _{\alpha \in \F ^d}$. The unitary $U_T$ is then defined by transposition of the index:
$$ U_T L^\alpha 1 = U_T e_\alpha = e_{\alpha ^T}, $$ and $\alpha ^T$ denotes the transpose of $\alpha \in \F ^d$ defined previously: if $\alpha = i_1 \cdots i_k$, $\alpha ^T = i_k \cdots i_1$. It is easy to check that
$$ U_T L^\alpha = R^{\alpha ^T} U_T, $$ and it follows that $L^\infty _d \simeq R^\infty _d$ are unitarily equivalent. For this reason, when it is not necessary to distinguish between left and right, we will identify $L^\infty _d$ with $R^\infty _d$, and simply use $F^\infty _d$ to denote the \emph{free Toeplitz algebra}. Any $F \in F^\infty _d$ can then be identified with a (unitarily equivalent) transpose-conjugate pair $F = (F^L , F^R) \in L^\infty _d \times R^\infty _d$. In terms of formal power series, if
\be F^L (Z) = \sum _{\alpha} Z^\alpha F_\alpha, \label{freePS} \ee then
$$ F^R (Z) = (F^L (Z) ) ^T = \sum _{\alpha} Z^{\alpha ^T} F_\alpha  = \sum _{\alpha} Z^\alpha F_{\alpha ^T}. $$  This defines a transpose map on free formal power series, $F^R = T \circ F^L$.

Any $F \in L^\infty _d \otimes \L (\H )$ has the `free Fourier series' of equation (\ref{freePS})  which is defined by computing \cite{DPinv}:
$$ F(Z) = \sum _{\alpha \in \F ^d} Z^\alpha F_\alpha := \sum _{\alpha \in \F ^d } (L^\alpha 1) F_\alpha := M_F ^L (1 \otimes I_\H) ; \quad \quad F_\alpha \in \L (\H). $$

Given any $0\leq r < 1$, and any $F \in L^\infty _d \otimes \L (\H )$, one can check as in \emph{e.g.} \cite[Lemma 3.5.2, Theorem 3.5.5]{Sha2013}, that the power series
$$ \sum _{\alpha \in \F ^d } (rL) ^\alpha \otimes F_\alpha, $$ converges in operator norm for $F^2 _d \otimes \H$. This shows that
$$ F _r (Z) := \sum Z ^\alpha r^\alpha F_\alpha \in \mc{A} _d ^L \otimes \L (\H ), $$ belongs to the (operator-valued) free left disk algebra and one can check as in \cite[Proposition 4.2]{Pop-funcalc} that $M_{F_r} ^L $ converges to $M^L _F$ in the strong operator topology as $r \rightarrow 1 ^-$.

It is important to note, however, that as in the case of Fourier series for the classical disk algebra \cite{Hoff}, the partial sums of the free Fourier series for $F \in \mc{A} _d ^L$ may
not converge, even in the strong or weak operator topologies \cite{DPinv}. Instead, any $F \in L^\infty _d$ (or more generally $L^\infty _d \otimes \L (\H )$) can be recovered from its free Fourier series by taking Ces\`{a}ro sums.
Namely, given any $F \in L ^\infty _d \otimes \L (\H)$, the $N$th Ces\`{a}ro sum of $F$, $\Sigma _N (F) \in L^\infty _d \otimes \L (\H )$ is the average of the first $N$ partial sums of the free Fourier series of $F$. As shown in \cite{DPinv}, for any $N \in \N \cup \{ 0 \}$, $\Sigma _N: L^\infty _d \otimes \L (\H) \rightarrow L^\infty _d \otimes \L (\H)$ defines a completely contractive unital map (into free polynomials) so that
$\Sigma _N (F)$ converges in the strong operator topology of $\L (F^2 _d \otimes \H )$ to $F$.

Results of Popescu \cite{Pop-vN,Pop-funcalc,Pop-freeholo} show that any $F \in L^\infty _d \otimes \L (\H )$ can be used to define a function on strict row contractions: If $T \in \left( \L (\J \otimes \C ^d , \J ) \right) _1$ then by
the Popescu-von Neumann inequality
$$ \| p (T _1 , ... ,T_d ) \| _{\L (\J )} \leq \| p (L_1 , ... , L _d ) \| = \| M _{p} ^L \| _{\L (F^2 _d )} ; \quad \quad p \in \L (\H) \langle L \rangle,$$ where $\L (\H) \langle L \rangle = \L (\H) \langle L_1 , ... , L_d \rangle$ denotes the algebra of polynomials in the $d$ free (non-commuting) variables $L_k$ with coefficients in $\L (\H)$. This inequality (and its matrix-valued version) shows that
$$ p (L)  \in \L (\H ) \langle L \rangle  \mapsto p(T_1, ... , T_d ),$$ defines a unital completely contractive algebra homomorphism which can be extended by continuity to $L ^\infty _d \otimes \L (\H )$.

This functional calculus is one of the inspirations for free non-commutative function theory \cite{KVV,Pop-freeholo,Pop-freeholo2}. Here is a brief introduction which is sufficiently general for our purposes: Let $V = \C ^d$, a complex vector space, and consider the disjoint union
$$ V _{nc} := \coprod _{n=1} ^\infty V ^{n\times n}, \quad \quad   V^{n\times n} := V \otimes \C ^{n\times n} = \C ^{n\times n} \otimes \C ^d.$$ Elements $Z \in V^{n\times n}$ are viewed as bounded row operators on $\C ^n$: $Z =(Z_1, ... , Z_d ): \C ^n \otimes \C ^d \rightarrow \C ^n$. Consider the non-commutative (NC) open unit ball $\Om \subseteq V_{nc}$,
$$ \Om := \coprod _{n =1 } ^\infty \Om _n; \quad \quad \Om _n := \left( \C ^{n \times n} \otimes \C ^d \right) _1, $$ each $\Om _n$ is the set of all strict row contractions on $\C ^n$. This set $\Om$ is an example of what is called a non-commutative (NC) set \cite{KVV} (it is closed under direct sums, and it is also both left and right admissable in the terminology of \cite{KVV}).

A function $F : \Om \subset V_{nc} \rightarrow \L (\H )  _{nc} = \coprod \L(\H ) ^{n\times n}$ is called a non-commutative or \emph{free function} if it has the two properties:
$$ F : \Om _n \rightarrow \L (\H)  ^{n\times n}; \quad \quad F \mbox{ is \emph{graded}}, $$  and, if  $Z \in \Om _n$, $W \in \Om _m$, and $\alpha \in \C ^{m\times n}$ obey
$$ \alpha Z = W \alpha, $$ then $$ \alpha F (Z) = F(W) \alpha; \quad \quad F \mbox{ \emph{respects intertwinings}}.$$
The free function $F$ is called:
\bi
\item[(i)]  \emph{locally bounded} if for any $Z \in \Om _n$, there is a $\delta _n > 0$ so that $F$ is bounded on the ball of radius $\delta _n$ about $Z \in \Om _n$.
\item[(ii)] \emph{analytic} or holomorphic on $\Om$ if $F$ is locally bounded and \emph{G\^{a}teaux differentiable}: For any $Z \in \Om _n$ and $W \in V^{n\times n}$, the G\^{a}teaux derivative of $F$ at $Z$ in the direction $W$:
$$ \lim _{t \rightarrow 0} \frac{F(Z+tW) - F(Z)}{t} = \left. \frac{d}{dt} F(Z +tW) \right| _{t =0} =: \delta F (Z) (W), \quad \quad \mbox{exists.}$$
\ei
By \cite[Theorems 7.2 and 7.4]{KVV}, any locally bounded free function $F$ is automatically analytic, and analyticity of $F$ also implies that $F$ has a certain power series representation (Taylor-Taylor series) with non-zero radius of convergence about any $Z \in \Om$ (it also implies $F$ is Fr\'{e}chet differentiable), see \cite[Chapter 7]{KVV}. Moreover, the results of \cite{Pop-freeholo,Pop-freeholo2,KVV} show, remarkably, that many classical results from complex analysis and several complex variables have purely algebraic proofs that extend naturally to this setting.

Let $\mr{Hol} (\Om ) \otimes \L (\H )$ denote the algebra of all free holomorphic functions on the non-commutative (NC) ball $\Om$ taking values in $\L (\H) _{nc}$. As in \cite{Pop-freeholo}, we define the (operator-valued) \emph{free Hardy algebra}, as the algebra of all uniformly bounded free holomorphic functions on this NC domain $\Om$ taking values in $\L (\H) _{nc}$:
$$ H^\infty (\Om) := \{ F \in \mr{Hol} (\Om ) |  \ \| F \| _\infty < \infty \}, $$ where the supremum norm of $F$ over the NC unit ball is
$$ \| F \| _{\infty} := \sup _{Z \in \Om } \| F (Z ) \|. $$ By the results of \cite[Chapter 7]{KVV}, any $H \in H^\infty (\Om ) \otimes \L (\H )$ has a power series
representation: $$ H(Z) = \sum _{\alpha \in \F ^d} Z^\alpha H_\alpha := \sum Z^\alpha \otimes H_\alpha; \quad \quad Z \in \Om, \ H_\alpha \in \L (\H ), $$ which converges absolutely for any $Z \in \Om$, and uniformly
on any closed NC ball $\Om _r := \coprod (\Om _r ) _n, \ (\Om _r ) _n := \left[ \C ^{n\times n} \otimes \C ^d \right] _r$ of radius $0<r<1$ \cite[Theorems 7.10 and 7.2]{KVV}. The following theorem shows that the free analytic Toeplitz algebra and free Hardy algebra are naturally isomorphic and can be viewed as the same object:

\begin{thm}{ (\cite[Theorem 3.1]{Pop-freeholo}, \cite{KVV})}
    The map $\Phi : F^\infty _d \otimes \L (\H ) \rightarrow H^\infty (\Om ) \otimes \L (\H )$ defined by
$$ H(L) := \sum L^\alpha \otimes H_\alpha \in \L (\H) \{ L \} \mapsto H(Z) := \sum Z^\alpha H_\alpha \in \L (\H ) \{ Z \}, $$ is a unital completely isometric isomorphism.
\end{thm}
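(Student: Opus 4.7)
The plan is to build the map $\Phi$ by first showing it is well-defined as a map into $H^\infty(\Om) \otimes \L(\H)$, then verify it is a unital homomorphism, a complete isometry, and finally surjective. Throughout, identify $F^\infty_d$ with $L^\infty_d$ as in the paper's convention.

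First I would define $\Phi(F)$ pointwise. Given $F \in L^\infty_d \otimes \L(\H)$ with free Fourier coefficients $F_\alpha = (L^\alpha 1)^* F (1 \otimes I_\H)$, and given $Z \in \Om_n$, I need to make sense of $\sum_{\alpha} Z^\alpha \otimes F_\alpha$. The key is to use the Abel sum approximations $F_r := \sum_\alpha r^{|\alpha|} L^\alpha \otimes F_\alpha \in \mc{A}_d^L \otimes \L(\H)$, which by the paragraph above the theorem converge in norm and satisfy $\|M^L_{F_r}\| \le \|M^L_F\|$. Applying the (matrix-valued) Popescu--von Neumann inequality to the row contraction $Z \in \Om_n$ gives $\|F_r(Z)\| \le \|M_{F_r}^L\| \le \|F\|$, and since $\|Z\| < 1$ one can show the series $\sum_\alpha Z^\alpha \otimes F_\alpha$ converges absolutely (the word-counts give a geometric majorization by $\|Z\|^k$ times $d^k$-many terms of size $\le \|F\|$; a uniform bound in $r$ combined with normal convergence in $r$ on compact subsets of $\Om$ gives that $F(Z) := \lim_{r\to 1^-} F_r(Z)$ exists and equals the series). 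The coefficient-preserving nature of the construction makes $\Phi(F)$ graded and respect intertwinings (since $\alpha Z = W\alpha$ implies $\alpha Z^\beta = W^\beta \alpha$ for every word $\beta$), hence $\Phi(F)$ is a free function; local boundedness is immediate from the supremum bound, so it is holomorphic in the sense of \cite{KVV}, and bounded by $\|F\|$, which places it in $H^\infty(\Om) \otimes \L(\H)$.

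Next I would check that $\Phi$ is a unital algebra homomorphism. Unitality is obvious. The product formula for free power series, together with the Cauchy product rule for convolution $\alpha\beta=\ga$, yields $\Phi(FG)(Z) = \Phi(F)(Z)\Phi(G)(Z)$: this holds at the level of the $F_r, G_r$ approximations (which are polynomials, for which it is purely algebraic) and passes to the limit.

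For the completely isometric property, the inequality $\|\Phi(F)\|_\infty \le \|F\|$ in all matrix amplifications is exactly the operator-valued Popescu--von Neumann inequality extended from polynomials to $L^\infty_d \otimes \L(\H)$ by Ces\`aro/Abel approximation, as already discussed. The nontrivial direction is $\|F\| \le \|\Phi(F)\|_\infty$ (in every amplification). Here I would test $\Phi(F)$ against the concrete strict row contractions $T^{(r,N)} := r\, P_N L P_N \in \Om_{d_N}$ on the finite-dimensional subspace of Fock space spanned by words of length $\le N$; the evaluation $\Phi(F)(T^{(r,N)})$ is (up to conjugation by the inclusion) exactly $r P_N F_r P_N$ compressed to that finite cube. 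Taking $N \to \infty$ and $r \to 1^-$, a standard SOT-approximation argument (using that $F_r \to F$ strongly and $P_N \to I$ strongly) recovers $\|F\|$ as the supremum of $\|\Phi(F)(T^{(r,N)})\|$. The matrix-amplified version follows identically after tensoring with $I_{\C^k}$.

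Finally, for surjectivity, let $H \in H^\infty(\Om) \otimes \L(\H)$ with its Taylor--Taylor series $H(Z) = \sum_\alpha Z^\alpha H_\alpha$. Evaluating at $Z = rL$ for $0 \le r < 1$ (a strict row contraction on $F^2_d$, by e.g.\ Popescu) gives a bounded operator $H(rL)$ with $\|H(rL)\| \le \|H\|_\infty$ by the definition of the $H^\infty(\Om)$-norm. The family $\{H(rL)\}_{r<1}$ is uniformly bounded; a standard weak-$*$ (WOT) compactness argument, combined with the fact that the free Fourier coefficients of $H(rL)$ are $r^{|\alpha|}H_\alpha$ and converge to $H_\alpha$ as $r\to 1^-$, produces a limit $F \in L^\infty_d \otimes \L(\H)$ with coefficients $F_\alpha = H_\alpha$ and $\|F\| \le \|H\|_\infty$. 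Since $\Phi(F)$ and $H$ have identical Taylor--Taylor coefficients at the origin and both are free holomorphic on $\Om$, they coincide. The main obstacle is therefore the careful $r\to 1^-$, $N\to\infty$ interchange underlying both the lower norm bound and the WOT limit in surjectivity; everything else is a matter of assembling Popescu's free functional calculus with the Taylor--Taylor expansion of \cite{KVV}.
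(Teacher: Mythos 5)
Your proposal follows essentially the same route as the paper's proof sketch (the paper cites \cite{Pop-freeholo} and \cite{KVV} and only offers a brief remark: use Popescu's free functional calculus for the isometry, and the Taylor--Taylor expansion for surjectivity). Two points deserve correction, however.

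First, the justification for absolute convergence of $\sum_\alpha Z^\alpha \otimes F_\alpha$ at $Z \in \Om_n$ is wrong as written. Bounding each term by $\|Z\|^{|\alpha|}\|F\|$ and counting the $d^k$ words of length $k$ gives a majorant $\sum_k (d\|Z\|)^k\|F\|$, which diverges for $\|Z\| \geq 1/d$; that is, it only works well inside the subdomain $\|Z\| < 1/d$. The correct estimate groups the homogeneous pieces: for a strict row contraction $Z$, the row $\bigl(Z^\alpha\bigr)_{|\alpha|=k}$ has operator norm $\leq \|Z\|^k$, while the column $\bigl(F_\alpha\bigr)_{|\alpha|=k}$ has $\bigl\|\sum_{|\alpha|=k}F_\alpha^*F_\alpha\bigr\|^{1/2} \leq \|F\|$ (a standard consequence of the free Fourier expansion and the fact that $L^*L = I\otimes I_d$). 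This yields $\bigl\|\sum_{|\alpha|=k}Z^\alpha F_\alpha\bigr\|\leq \|Z\|^k\|F\|$ with no $d^k$ factor, and the geometric series in $\|Z\|<1$ converges. This is the content of Popescu's radius-of-convergence result.

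Second, in the surjectivity step the statement ``$\|H(rL)\|\leq \|H\|_\infty$ by the definition of the $H^\infty(\Om)$-norm'' is imprecise: $rL$ acts on the infinite-dimensional full Fock space, whereas $\Om$ consists only of finite-dimensional strict row contractions, so $H$ is not a priori defined at $rL$. One must first argue (via the Taylor--Taylor/free power series growth estimate from \cite{KVV} or \cite{Pop-freeholo}) that $\sum_\alpha (rL)^\alpha\otimes H_\alpha$ converges in norm, and then obtain the bound by compressing to the finite-dimensional subspaces $P_N F^2_d$, exactly as you do in your lower-norm argument: $P_N(\sum(rL)^\alpha H_\alpha)P_N = H(rP_N L P_N)$ with $rP_N L P_N \in \Om_{\dim\ran{P_N}}$, so $\|H(rL)\| = \sup_N\|P_N H(rL)P_N\|\leq\|H\|_\infty$. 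With these two repairs the argument is complete. Also note a small typo: $\Phi(F)(T^{(r,N)}) = P_N M^L_{F_r}P_N$, without the extra factor of $r$ in front.
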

    Recall that the above power series for $H(L)$ is to be understood as the SOT-limit of Ces\`{a}ro sums.
\begin{remark}
Using the free functional calculus of Popescu, it is not difficult to verify that $\Phi$ is injective, unital, and completely isometric. Surjectivity follows from approximating any $H \in H^\infty (\Om )\otimes \L (\H)$ by the partial sums of its Taylor-Taylor series expansion about $0_n \in \Om _n$ \cite[Chapter 7]{KVV}. We will call $H^\infty (\Om )$ the several-variable \emph{free Hardy algebra}, and under the above identification we will use the terms free Hardy algebra and free Toeplitz algebra interchangeably.
\end{remark}

\begin{remark} \label{NCRKHSmark}
    In recent research, the theory of positive kernel functions and RKHS has also been extended to the free function theory setting \cite{Ball-NC}. In particular, it can be shown that the class of all free formal RKHS is naturally isomorphic to the class of non-commutative reproducing kernel Hilbert spaces (NC-RKHS) \cite[Theorem 3.20]{Ball-NC}. A NC-RKHS can be viewed as a sort of reproducing kernel Hilbert space of free or non-commutative functions on a NC set.  In particular, one can naturally identify or view our free deBranges-Rovnyak spaces as NC-RKHS of this type.  We have found, however, that the free extension of our commutative Aleksandrov-Clark theory from \cite{Jur2014AC,JM}, seems to carry over most naturally using the formalism of free RKHS. Namely, many of the theorems and proofs of this paper are formally identical (or very similar) to those of \cite{JM}, upon replacing formal point evaluation maps $K _Z$ with the point evaluation maps $K _z$, $z \in \B ^d$.
\end{remark}

\section{Free Herglotz functions and Aleksandrov-Clark maps}
\label{Herglotzsect}

In this section we define free Herglotz functions and construct the free Aleksandrov-Clark maps associated to any element of the free operator-valued Schur classes. Our calculations here are a formal analogue of the approach in \cite{JM} for the commutative Schur class of Drury-Arveson space. As in the previous section, consider the NC set $\Om = \coprod \Om _n$, where $\Om _n = \left( \C ^{n\times n} \otimes \C ^d \right) _1$ is the set of all strict row contractions on $\C ^n$. In what follows we initially focus on the left case, analogous results hold for the right case.

\begin{defn} The \emph{free left Herglotz-Schur class}, $\scr{L} _d ^+ (\H)$, is the set of all free holomorphic $\L (H) _{nc}$-valued functions $H^L (Z) \in \L (\H) \{ Z \}$ on the NC unit ball $\Om$ such that the \emph{left free Herglotz kernel}:
$$ \hat{K} ^{L} (Z,W) := \frac{1}{2} \left( H ^L (Z) \hat{k} (Z,W) + \hat{k} (Z,W) H^L (W) ^* \right) \in \L(\H) \{ Z, W^* \}, $$ is a positive formal free kernel.
\end{defn}
This expression for $\hat{K} ^L$ converges in operator norm for fixed $Z,W \in \Om _n$, and this implies, in particular, that $\re{H^L (Z) } \geq 0$ for all $Z \in \Om _n$ \cite{Ball-NC}. That is, $H^L (Z)$ is a bounded, accretive operator for any $Z \in \Om$. It then follows as in
\cite[Chapter IV.4]{NF}, that $H ^L (Z) +I$ is invertible, and that
$$ B _H ^L (Z) := (H^L (Z) + I ) ^{-1} (H^L (Z) -I ) \in  \L (\H ) \{ Z \}$$ is contraction-valued on the NC unit ball $\Om$ so that $B^L _H \in  [ H^\infty (\Om ) \otimes \L (\H) ] _1 = \scr{L} _d (\H)$ belongs to the free left Schur class. Moreover, $I -B^L _H (Z) = 2 (H^L (Z) +I)$ is invertible for any $Z \in \Om$, and the free deBranges-Rovnyak kernel $\hat{k} ^L$ of $B^L _H$ is given by $$ \hat{k} ^{L} (z,w) = \left( I -B^L _H (Z) \right) \hat{K} ^L (Z,W) \left( I -B^L _H (W) ^* \right). $$ The free right Herglotz-Schur class, $\scr{R} _d ^+ (\H )$, is defined similarly, and given $H^L \in \scr{L} _d ^+ (\H )$, it easy to see that the formal transpose maps $\scr{L} _d ^+ (\H)$ onto $\scr{R} _d ^+ (\H)$ and if we define $H^R := T \circ H^L$, then $B ^R _H = T \circ B^L _H$.

Conversely, let $B = (B^L , B^R ) \in \scr{L} _d (\H) \times \scr{R} _d (\H) $ be a free Schur class transpose-conjugate pair. Motivated by the above, we will assume that any such $B^L , B^R$ are \emph{non-unital} in the sense that $I - B^L (Z), \ I - B^R (Z)$ are invertible for any fixed $Z \in \Om$. Given such a pair, $B$, one can define a transpose-conjugate pair of free holomorphic functions $H _B = (H^L _B , H^R _B)$ on $\Om$ by
$$ H^L _B (Z) := (I - B^L (Z) ) ^{-1} (I + B^L (Z)); \quad \quad Z \in \Om, $$ and similarly for $H^R _B$. The free Herglotz kernel for $H^L _B$ is then
\ba \hat{K} ^L (Z,W) & = & \frac{1}{2} \left( H^L _B (Z) \hat{k} (Z, W) + \hat{k} (Z,W) H^L _B (W) ^* \right) \nn \\
& = & (I - B^L (Z) ) ^{-1} \hat{k} ^L (Z,W) (I - B^L (W) ^* ) ^{-1}, \nn \ea where $\hat{k} ^L$ is the free left deBranges-Rovnyak kernel for $\scr{H} ^L (B)$. It follows that $\hat{K} ^L$ (and similarly $\hat{K} ^R$) are positive free kernels so that $H _B = (H^L _B , H^R _B)$ is a transpose-conjugate
pair of free Herglotz-Schur functions on $\Om$. It is easy to verify that the maps $B \mapsto H_B$ and $H \mapsto B_H$ are compositional inverses and define bijections between the non-unital free Schur classes and the free Herglotz-Schur classes.

\begin{remark}
    The assumption that a free Schur pair $B = (B^L , B^R) \in \scr{L} _d (\H ) \times \scr{R} _d (\H )$ be non-unital is not very restrictive. A simple argument combining the free Schwarz lemma for free holomorphic functions on the NC unit ball $\Om$ (see \cite[Theorem 2.4]{Pop-freeholo})
with automorphisms of the unit ball of $\L (\H )$ shows that $B(Z)$ is strictly contractive on the NC unit ball $\Om$ if and only if $B(0) = B _\emptyset$ is a strict contraction (for $0 \in \Om _n$), and this happens if and only if $b(0) = B^L _\emptyset = B^R _\emptyset$ is a strict contraction, where $b \in \scr{S} _d (\H )$ is the image of $B ^L$ or $B^R$ under the symmetrization (quotient by the commutator ideal) map. We say $B$ is \emph{strictly contractive} if this holds, and certainly any strictly contractive $B$ is non-unital.

It seems reasonable that the assumption that $B$ be non-unital can be relaxed if one is willing to allow $H^L , H^R$ to take values in unbounded operators see \cite[Remark 1.10]{JM}. We will avoid such complications and assume throughout that $B$ is non-unital.
\end{remark}

 Given any non-unital $B = (B^L , B^R ) \in \scr{L} _d (\H ) \times \scr{R} _d (\H )$, we define the left free Herglotz space,
$\scr{H} ^{L, +} (H_B ):= \mc{F} (\hat{K} ^L )$, as the free RKHS corresponding to the free left Herglotz kernel $\hat{K} ^L$ of $H_B ^L$.  The above relationship between the left free deBranges-Rovnyak and left free Herglotz kernels shows that there is a natural unitary multiplier from $\scr{H} ^L (B)$ onto $\scr{H} ^{L, +} ( H_B)$:

\begin{lemma} \label{canformmult}
Given any non-unital $B \in \scr{L} _d (\H) $, formal left multiplication by $I - B(Z)$ is an isometry, $M^L _{(I-B)}$, of the left free Herglotz space $\scr{H} ^{L, +} (H_B)$ onto the left free deBranges-Rovnyak space $\scr{H} ^L (B)$. The action of this isometry on formal point evaluation maps is:
$$ M ^L _{(I-B)} \hat{K} _W ^L = (M ^L _{(I-B) ^{-1}} ) ^* \hat{K} _W ^L = \hat{k} _w ^L (I -B (W) ^* ) ^{-1} \in \L (\H , \scr{H} ^L (B) ) \{ W^* \}. $$
\end{lemma}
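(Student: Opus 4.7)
The plan is to reduce the isometry claim to the kernel identity
\[
 \hat{k}^L(Z,W) = (I-B(Z))\,\hat{K}^L(Z,W)\,(I-B(W)^*),
\]
already noted just after the definition of the free Herglotz-Schur class. Setting $M(Z):=I-B(Z)$ and comparing with the free-multiplier characterization theorem in the previous section, this identity is precisely $M(Z)\,\hat{K}^L(Z,W)\,M(W)^* = \hat{k}^L(Z,W)$, so $M^L_{(I-B)}$ is automatically a bounded, co-isometric left free multiplier from $\scr{H}^{L,+}(H_B)=\mc{F}(\hat{K}^L)$ onto $\scr{H}^L(B)=\mc{F}(\hat{k}^L)$. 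Thus $M^L_{(I-B)}(M^L_{(I-B)})^* = I$ comes for free, and the real content reduces to showing $M^L_{(I-B)}$ is in fact injective, which will upgrade the co-isometry to a unitary.

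For injectivity I would exploit the non-unital hypothesis. Evaluating at $0\in\Om_1$, the non-unital assumption forces the constant coefficient $I-B_\emptyset\in\L(\H)$ to be invertible. Any formal free power series in $\L(\H)\{Z\}$ whose empty-word coefficient is invertible has a two-sided formal inverse $C(Z)=(I-B(Z))^{-1}\in\L(\H)\{Z\}$, constructed by recursion on word length (equivalently, by the geometric series in $B(Z)-B_\emptyset$). Consequently, formal left multiplication by $I-B(Z)$ is injective on the whole space $\H\{Z\}$ of $\H$-valued free formal power series, and in particular on $\scr{H}^{L,+}(H_B)$. Combined with the co-isometry property, the standard Hilbert-space identity $\ker M^L_{(I-B)} = \mathrm{ran}((M^L_{(I-B)})^*)^\perp = \{0\}$ shows that $M^L_{(I-B)}$ is unitary.

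For the formula on formal point evaluation maps I would feed $M=M^L_{(I-B)}$ into the adjoint identity recorded earlier, namely $M^* K_W = k_W M(W)^*$ for a bounded left multiplier from $\mc{F}(k)$ to $\mc{F}(K)$. In our setting this reads
\[
 (M^L_{(I-B)})^*\,\hat{k}_W^L \;=\; \hat{K}_W^L\,(I-B(W)^*).
\]
Since $I-B(W)^*$ is formally invertible (same recursion argument in $W^*$), inverting in $W$ gives $(M^L_{(I-B)})^{-*}\,\hat{K}_W^L = \hat{k}_W^L\,(I-B(W)^*)^{-1}$, and unitarity of $M^L_{(I-B)}$ turns $(M^L_{(I-B)})^{-*}$ into $M^L_{(I-B)}$, yielding the displayed formula. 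The equivalent form $(M^L_{(I-B)^{-1}})^*\hat{K}_W^L$ comes from observing that the inverse multiplier is formal left multiplication by $C(Z)=(I-B(Z))^{-1}$, so $(M^L_{(I-B)^{-1}})^* = ((M^L_{(I-B)})^{-1})^* = M^L_{(I-B)}$ by unitarity.

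The step I expect to require the most care is the formal-power-series bookkeeping in the second and third paragraphs: justifying that invertibility of $I-B_\emptyset$ really does give an algebraic two-sided formal inverse $C(Z)$ compatible with the free RKHS structure, and that the adjoint identity may be legitimately inverted coefficient by coefficient in $W^*$ to produce $(I-B(W)^*)^{-1}$ rather than merely a formal inverse on a smaller domain. These are essentially algebraic manipulations, but one has to confirm that no convergence is needed and that the output lives in $\L(\H,\scr{H}^L(B))\{W^*\}$ as claimed.
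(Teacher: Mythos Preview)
Your proposal is correct and follows exactly the approach the paper intends. In fact the paper does not give a separate proof of this lemma: the sentence immediately preceding it (``The above relationship between the left free deBranges-Rovnyak and left free Herglotz kernels shows that there is a natural unitary multiplier\ldots'') simply asserts that the kernel identity $\hat{K}^L(Z,W)=(I-B(Z))^{-1}\hat{k}^L(Z,W)(I-B(W)^*)^{-1}$, together with the free-multiplier characterization theorem of Section~2, already establishes the result. Your write-up supplies precisely the details the paper leaves implicit --- the co-isometry from the kernel equality, the injectivity from formal invertibility of $I-B(Z)$ under the non-unital hypothesis, and the adjoint formula on point evaluation maps --- so nothing further is needed.
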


Given any fixed left free Herglotz function $H^L$, define a map $\phi : \mc{A} + \mc{A} ^* \rightarrow \L (\H )$ by
$$ \phi (I ) := \re{ H _\emptyset } \geq 0 ; \quad \quad \phi (L^{\alpha ^T} ) ^* := \frac{1}{2} H_\alpha; \ \alpha \neq \emptyset, $$ where the $H_\alpha \in \L (\H )$ are the coefficients of the formal power series for $H^L $.  Extend $\phi$ so that it is self-adjoint and linear.
It follows that
$$ H ^L (Z) = 2 \sum _\alpha Z^\alpha \phi (L^{\alpha ^T}) ^* - \phi (I), $$ by definition. Let $CP (\A ; \H)$ denote the set of all completely positive maps from $\A + \A ^*$ into $\L (\H )$ (we simply write $\A +\A ^*$ in place of its norm closure). Recall here that $\mc{A} := \mc{A} _d ^L$ is the left free disk algebra.

\begin{prop} \label{Herglotzkern}
    The free left Herglotz kernel of $H^L$, $\hat{K} ^L (Z,W)$, has the form
    $$ \hat{K} ^L (Z,W) = \sum _{\alpha , \beta } Z^\alpha (W^*)^{\beta ^T} \phi ( (L ^{\alpha ^T})  ^* L ^{\beta ^T} ), $$ and the map $\phi$ belongs to $CP (\mc{A} ; \H )$.
\end{prop}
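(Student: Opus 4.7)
The assertion has two parts: the explicit coefficient formula $\hat{K}^L_{\alpha,\beta}=\phi\bigl((L^{\alpha^T})^*L^{\beta^T}\bigr)$ for the left Herglotz kernel, and the complete positivity of $\phi$ on $\A+\A^*$. Both reduce to careful bookkeeping of formal power series once one exploits the fact that $(L^\sigma)^*L^\tau$ is either a monomial, the adjoint of a monomial, or zero, according to whether one of $\sigma,\tau$ is a prefix of the other.

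For the coefficient formula I would substitute
$H^L(Z)=2\sum_\alpha Z^\alpha\phi((L^{\alpha^T})^*)^*-\phi(I)$, its formal adjoint, and $\hat{k}(Z,W)=\sum_\gamma Z^\gamma(W^*)^{\gamma^T}$ into
\[
\hat{K}^L(Z,W)=\tfrac12 H^L(Z)\hat{k}(Z,W)+\tfrac12\hat{k}(Z,W)H^L(W)^*,
\]
and extract the coefficient of $Z^\alpha(W^*)^{\beta^T}$ using the free concatenation rules $Z^\alpha Z^\gamma=Z^{\alpha\gamma}$ and $(W^*)^{\gamma^T}(W^*)^{\beta^T}=(W^*)^{(\beta\gamma)^T}$. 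The result vanishes unless one of $\alpha,\beta$ is a suffix of the other: one picks up $\tfrac12 H_\mu$ when $\alpha=\mu\beta$, $\tfrac12 H_\nu^*$ when $\beta=\nu\alpha$, and $\re{H_\emptyset}=\phi(I)$ on the diagonal $\alpha=\beta$. On the algebraic side the Cuntz-type relations force $(L^{\alpha^T})^*L^{\beta^T}$ to equal $L^{\nu^T}$ when $\beta^T=\alpha^T\nu^T$, to equal $(L^{\mu^T})^*$ when $\alpha^T=\beta^T\mu^T$, and to vanish otherwise; applying $\phi$, using $\phi((L^{\gamma^T})^*)=\tfrac12 H_\gamma$ (and hence $\phi(L^{\gamma^T})=\tfrac12 H_\gamma^*$), reproduces exactly these three contributions, matching term by term.

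For complete positivity, the hypothesis that $\hat{K}^L$ is a positive free formal kernel, combined with the coefficient formula and the bijectivity of the transpose $\gamma\mapsto\gamma^T$ on $\F^d$, translates directly into
\[
\sum_{i,j}\bigl\langle v_i,\phi(a_i^*a_j)v_j\bigr\rangle_{\H}\;\ge\;0
\]
for all finite families of polynomials $\{a_i\}\subset\A$ in $L$ and vectors $\{v_i\}\subset\H$. Promoting the $v_i$ to $\H$-valued coefficients amplifies this to matrix-level positivity of $[(\phi\otimes\mathrm{id}_n)(A_i^*A_j)]$ for matrix polynomials $A_i\in M_n(\A)$. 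To upgrade to $\phi\in CP(\A;\H)$ one invokes Popescu's Ces\`aro sum maps $\Sigma_N$, which are unital, completely positive, completely contractive, and converge strongly to the identity on $\A+\A^*$: this allows one to approximate an arbitrary positive element of $M_n(\A+\A^*)$ by polynomial positive matrices, and norm continuity of $\phi$ then transfers positivity. The main obstacle is exactly this last density step, since $\A$ is non-self-adjoint and a positive element of $M_n(\A+\A^*)$ need not admit a single factorization $P^*P$ with $P\in M_n(\A)$; the Ces\`aro projections together with Arveson-type extension arguments close the gap, in direct parallel with the commutative version of this argument carried out in \cite{Jur2014AC,JM}.
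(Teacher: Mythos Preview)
Your treatment of the coefficient formula is essentially the paper's: expand $H^L(Z)\hat{k}(Z,W)+\hat{k}(Z,W)H^L(W)^*$ termwise, read off the coefficient of $Z^\alpha(W^*)^{\beta^T}$, and match it against the Cuntz-type trichotomy for $(L^{\alpha^T})^*L^{\beta^T}$. No issues there.

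For complete positivity the paper takes a different route from yours, and the difference is not cosmetic. The paper first proves a separate lemma: every positive element of $\A+\A^*$ (and, by amplification, of $M_n(\A+\A^*)$) is a norm limit of sums of squares $\sum a_k^*a_k$ with $a_k\in\A$. The proof is a Hahn--Banach cone separation combined with a GNS construction: if some positive $p$ were not in the closed sums-of-squares cone, a separating functional $\lambda$ would give a positive semidefinite form $\langle a,b\rangle_\lambda:=\lambda(a^*b)$ on $\A$, a completely contractive GNS representation $\pi_\lambda$, and then $\lambda(p)=\langle 1,\pi_\lambda(p)1\rangle_\lambda\ge 0$, a contradiction. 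With this lemma in hand, kernel positivity immediately gives $\phi^{(n)}(A^*A)\ge 0$ for polynomial $A$, and the sums-of-squares density finishes the argument.

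Your route via Ces\`aro sums has a gap at exactly the point you flag. Granting that $\Sigma_N$ extends to a unital completely positive map on $\A+\A^*$, it sends a positive $P\in M_n(\A+\A^*)$ to a positive \emph{polynomial} element $\Sigma_N(P)$ --- but your earlier step only establishes $\phi^{(n)}\ge 0$ on \emph{sums of squares}, not on arbitrary positive polynomials, and a positive polynomial in $M_n(\A+\A^*)$ is not a priori a sum of squares from $M_n(\A)$ (there is no free matrix Fej\'er--Riesz theorem being invoked). The phrase ``Arveson-type extension arguments'' does not resolve this: Arveson extension needs complete contractivity of $\phi|_\A$ as input, which is equivalent to what you are trying to prove. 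The clean fix is precisely the paper's sums-of-squares lemma; once you have it, the Ces\`aro approximation becomes unnecessary.
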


It will be useful to first show that any positive element in $\mc{A} + \mc{A}^*$ is the limit of `sums of squares':
Let $\mc{C} := [ \A + \A^* ] _+$, the positive norm-closed cone of the (norm-closed) operator system $\A +\A ^*$, and let $\mc{C} _0 := [ \mc{A} ^* \mc{A} ] _+$, \emph{i.e.} $\mc{C} _0$ is the positive norm-closed cone of elements which are `sums of squares':
$$ p \in \mc{C} _0 \quad \Rightarrow \quad p = \sum a_k ^* a_k ; \quad \quad a_k \in \mc{A}. $$
\begin{lemma}
Any positive element of $\A + \A ^*$ is the norm-limit of sums of squares, \emph{i.e.}, $\mc{C} _0 = \mc{C}$.
\end{lemma}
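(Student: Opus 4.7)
The inclusion $\mc{C} _0 \subseteq \mc{C}$ is immediate: each product $a^* b$ with $a, b \in \mc{A}$ lies in $\mc{A} + \mc{A} ^*$, since the Cuntz relations $L_i ^* L_j = \delta _{ij} I$ collapse any monomial product $L^{\alpha *} L^\beta$ to $L^\gamma$, to $L^{\delta *}$, or to $0$, and norm density of the polynomial algebra in $\mc{A}$ extends this to all $a, b \in \mc{A}$. The content of the lemma is the reverse inclusion $\mc{C} \subseteq \mc{C} _0$, which I plan to establish by a two-step reduction: first approximate an arbitrary $X \in \mc{C}$ in norm by a positive \emph{free trigonometric polynomial} (a polynomial in the $L_i$ and $L_i ^*$), and then factor any such polynomial as a finite sum of squares of elements of $\mc{A}$.

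\emph{Step 1 (reduction to free trig polynomials).} A left multiplier whose adjoint is again a left multiplier must commute with every right creation operator $R^\alpha$ and act as a scalar on the vacuum vector, so $\mc{A} \cap \mc{A} ^* = \C I$; consequently every self-adjoint $X \in \mc{A} + \mc{A} ^*$ has the form $X = c + c^*$ for some $c \in \mc{A}$. Choose polynomials $c_n$ in $L_1, \ldots, L_d$ with $\delta _n := \| c - c_n \| \to 0$ and set
$$ X_n := c_n + c_n ^* + 2 \delta _n I. $$
Each $X_n$ is a free trigonometric polynomial, $\| X_n - X \| \leq 4 \delta _n \to 0$, and
$$ X_n - X = (c_n - c) + (c_n - c) ^* + 2 \delta _n I \geq 0 $$
because the self-adjoint operator $(c_n - c) + (c_n - c)^*$ has operator norm at most $2 \delta _n$. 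Hence every element of $\mc{C}$ is a norm-limit of positive free trigonometric polynomials, and it suffices to show every such polynomial lies in $\mc{C} _0$.

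\emph{Step 2 (free Fej\'er--Riesz).} Concretely, the task reduces to showing that any positive free trigonometric polynomial $Y = c(L) + c(L) ^*$, with $c$ a free polynomial in $L_1, \ldots, L_d$, admits a finite decomposition $Y = \sum _{k=1} ^K p_k (L) ^* p_k (L)$ with $p_k$ free polynomials. By the Popescu--von Neumann inequality the positivity of $Y$ on $F^2 _d$ is equivalent to $c(Z) + c(Z) ^* \geq 0$ for every strict row contraction $Z$ on every finite-dimensional Hilbert space, and a free Positivstellensatz of Helton--McCullough--Popescu type then supplies the sum-of-squares certificate. Alternatively one may compress $Y$ to the span of words of length at most $M$ (on which $Y$ is band-preserving by virtue of $\deg c < \infty$ and the word-length grading of $F^2 _d$), perform a block Cholesky factorization on the resulting finite-dimensional positive matrix, and use the Cuntz relations to recognize the Cholesky factors as polynomial left multipliers on all of $F^2 _d$. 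This free Fej\'er--Riesz step is the principal analytic obstacle; by contrast Step 1 is a soft polynomial-approximation argument.
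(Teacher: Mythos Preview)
Your approach is genuinely different from the paper's, and Step~1 is carried out correctly. The gap is in Step~2. The Helton--McCullough Positivstellensatz you invoke produces a certificate in the free $*$-polynomial algebra $\C\langle X, X^*\rangle$, typically of the form $\sum_k r_k^* r_k + \sum_j s_j^* (I - \sum_i X_i X_i^*) s_j$ with $r_k, s_j \in \C\langle X, X^*\rangle$; it does \emph{not} give a decomposition $\sum_k p_k^* p_k$ with each $p_k$ an \emph{analytic} polynomial. Substituting $X = L$, the weight $I - \sum_i L_i L_i^*$ is the vacuum projection, which does not lie in $\mc{A} + \mc{A}^*$, and the factors $r_k(L,L^*)$ need not lie in $\mc{A}$, so you do not land in $\mc{C}_0$. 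The Cholesky alternative is likewise only a sketch: $P_M Y P_M$ factors as $C^*C$ for \emph{some} matrix $C$, but the band structure alone does not force $C$ to be the compression of a polynomial left multiplier, and $P_M Y P_M \not\to Y$ in norm. A genuine free Fej\'er--Riesz theorem for positive right-multi-Toeplitz polynomials would close the gap, but that is itself a non-trivial result requiring its own proof or a precise citation --- you have identified the principal obstacle without overcoming it.

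The paper's proof sidesteps factorization entirely via a separation argument. Suppose $p \in \mc{C} \setminus \mc{C}_0$; Minkowski cone separation yields a self-adjoint functional $\lambda$ on $\mc{A} + \mc{A}^*$ with $\lambda \geq 0$ on $\mc{C}_0$ but $\lambda(p) < 0$. Since $\lambda(a^*a) \geq 0$ for all $a \in \mc{A}$, the GNS construction from $\lambda$ produces a Hilbert space $\mc{H}_\lambda$ and a unital completely contractive representation $\pi_\lambda : \mc{A} \to \L(\mc{H}_\lambda)$, which then extends to a completely positive map on the operator system $\mc{A} + \mc{A}^*$. Positivity of $\pi_\lambda(p)$ forces $\lambda(p) = \ip{1}{\pi_\lambda(p) 1}_\lambda \geq 0$, a contradiction. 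This is short and self-contained, using only the semi-Dirichlet structure of $\mc{A}$; in effect it proves the closure-level free Fej\'er--Riesz statement directly by duality rather than by exhibiting an explicit polynomial factorization.
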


\begin{proof}
 Suppose not. Then there is a positive $p \geq 0$ in $\mc{A} + \mc{A} ^*$ so that $p \in \mc{C} \sm \mc{C} _0$. By the Minkowski cone separation theorem,
there is a real linear functional $\la : \mc{C} \rightarrow \R$ so that $\la (q) \geq 0$ for all $q \in \mc{C} _0$ but $\la (p) < 0$.

We can extend $\la$ to a bounded complex linear functional on $\mc{A} + \mc{A} ^*$ in the usual way: If $x$ is self-adjoint in $\mc{A} + \mc{A}^*$ then $x = p-q$ for $p, q \in \mc{C}$. Then let $\La (x) := \la (p) - \la (q)$, and if $x = r +is$ in $\mc{A} + \mc{A} ^*$ with $r,s$ self-adjoint in $\mc{A} + \mc{A}^*$ then define
$\La (x) := \La (r) +i \La (s)$. This is possible since $\mc{A} + \mc{A} ^*$ is a unital operator system so that any self-adjoint element in $\mc{A} + \mc{A}^*$ can be written as the difference of elements of $\mc{C}$ (and the real and imaginary parts of any $x \in \mc{A} +\mc{A} ^*$ are also in the operator system). We will simply write $\la$ in place of its extension $\La$ to $\mc{A} + \mc{A} ^*$.

Define a quadratic form on $\mc{A}$ by:
$$ \ip{a}{b} _\la := \la (a^* b) \in \C; \quad \quad a,b \in \mc{A}. $$ This is a positive quadratic form or pre-inner product on $\mc{A}$,
$$ \ip{a}{a} _\la = \la (a^* a ) \geq 0 ; \quad \quad a \in \mc{A}, $$ since $a^* a \in \mc{C} _0$. As in the usual Gelfand-Naimark-Segal (GNS) construction if $N_\la \subset \mc{A}$ is the closed subspace of vectors of length zero with respect to $\ip{\cdot}{\cdot} _\la$, then this pre-inner product promotes to an inner product on $$ \frac{\mc{A}}{N_\la}, $$ and we let $\mc{H} _\la$ denote the Hilbert space completion of this inner product space.

We can also define a GNS representation $\pi _\la : \mc{A} \rightarrow \L (\H _\la )$ in the usual way:
$$ \pi _\la (a) (b + N_\la ) := ab + N _\la. $$ This is well-defined since $N_\la$ is a closed left $\mc{A}$-module. It is not hard to see that
$\pi _\la$ is a completely contractive and unital representation of $\mc{A}$, and so it extends naturally to a completely positive unital map on $\mc{A} + \mc{A} ^*$. Since $p \geq 0$ and $\pi _\la$ is positive, it follows that $\pi _\la (p) \in \L (\H _\la )$ is a positive operator. This produces the contradiction:
$$ \ip{1}{\pi _\la (p) 1} _\la = \la (p) < 0, $$ and we conclude that $\mc{C} _0 = \mc{C}$.
\end{proof}

\begin{proof}{ (of Proposition \ref{Herglotzkern} )}
    Let $\hat{K} := \hat{K} ^L$. We have that
\ba  2 \hat{K} (Z,W) & = & H ^L (Z) \hat{k} (Z,W) + \hat{k} (Z,W) H ^L (W) ^* \nn \\
& = & \sum _{\alpha, \beta} Z^{\alpha \beta} (W^* ) ^{\beta ^T} H_\alpha + \sum _{\alpha , \beta } Z^\alpha (W^*) ^{\alpha ^T \beta ^T} H_\beta ^* \nn \\
& = & \sum _{\ga , \beta } Z^\ga (W^* ) ^{\beta ^T} \left( \sum _{\alpha \beta = \ga } H_\alpha \right) + \sum _{\alpha , \ga } Z^\alpha (W^*) ^{\ga ^T} \left( \sum _{\beta \alpha = \ga } H_\beta ^* \right) \nn \\
& = & \sum _{\alpha, \beta } (Z) ^\alpha (W^*) ^{\beta ^T} \left( \sum _{\ga \beta = \alpha } H_\ga + \sum _{\ga \alpha = \beta } H_\ga ^* \right), \nn \ea
and this calculation shows that the coefficient kernel of the free positive kernel $\hat{K} $ is:
$$ \hat{K} _ {\alpha , \beta} := \frac{1}{2} \left( \sum _{\ga \beta = \alpha } H_\ga + \sum _{\ga \alpha = \beta } H_\ga ^* \right). $$
In particular it follows that
$$ \hat{K}_{\alpha , \emptyset} = \frac{1}{2} H_\alpha = \phi (L ^{\alpha ^T} ) ^*, $$ by definition.

Now suppose that $\alpha = \la  \beta $ and observe that
\ba 2 \hat{K}_{\la \cdot \beta , \beta } & = & \sum _{\ga \cdot \beta = \la \cdot \beta} H_\ga + \sum _{\ga \cdot \la \cdot \beta = \beta} H_\ga ^* \nn \\
& = & H_\la \nn \\
& = & 2 \hat{K} _{\la , \emptyset}. \nn \ea It follows that if $\alpha = \la \beta$, the map $\phi $ obeys
$$ \phi ( (L ^{\alpha ^T} ) ^*  L^{\beta ^T} ) = \phi ( L ^{\la ^T} ) ^*, $$ so that $\phi$ is well-defined on $\A +\A ^*$. In order to arrive at the above equation, observe that it was necessary that the transpose appears in the definition $2 \phi (L^* ) ^\alpha = 2 \phi (L ^{\alpha ^T} ) ^* = H_\alpha$. Since, for fixed $\alpha, \beta \in \F ^d$,
 \ba (L ^{\alpha ^T} ) ^* L^{\beta ^T} & = & \left\{ \begin{array}{cc} (L^{\ga ^T} ) ^*; & \ga \cdot \beta = \alpha \\ L^{\ga ^T}; & \ga \cdot \alpha =\beta \\ 0 & \mbox{else} \end{array} \right. \nn \\
& = & \sum _ {\ga ; \ \ga \cdot \beta = \alpha } (L^{\ga ^T})  ^*  + \sum _{\ga ; \ \ga \cdot \alpha = \beta} L^{\ga ^T}, \nn \ea  it follows that
$$ \hat{K} _{\alpha , \beta} = \phi \left( (L^{\alpha ^T}) ^* L^{\beta ^T} \right). $$

The fact that $ \hat{K}_{\alpha ,\beta }$ is a positive free coefficient kernel will imply that $\phi $ is completely positive: Indeed, consider any element $A \in \mc{A} \otimes \C ^{n \times n}$
of the form $$ A = \sum _{k=1} ^N L^{\alpha _k} \otimes C_k; \quad \quad \alpha _k \in \F ^d, \ C_k \in \C ^{n\times n}. $$ The set of all such finite sums is norm dense in $\mc{A} \otimes \C ^{n\times n}$. To show that $\phi$ is completely positive, the (matrix-version of the) previous sums of squares lemma implies that it is sufficient to show that
$$ \phi ^{(n)} (A ^* A )  =  (\phi \otimes \mr{id} _n ) \left( \sum _{k,j} (L ^{\alpha _k}) ^* L ^{\alpha _j} \otimes C _k ^* C_j  \right) \geq 0, $$ for all $n \in \N$. The above can be written as
\ba  \phi ^{(n)} (A ^* A ) & = & \sum _{k,j =1} ^N \phi \left( (L^{\alpha _k}) ^* L^{\alpha _j } \right) \otimes C_k ^* C_j \nn \\
& = & \sum _{k,j =1} ^N \hat{K}_{\alpha _k ^T , \alpha _j ^T } \otimes C_k ^* C_j \nn \\
& = & \left( \sum _{k =1} ^N \hat{K} _{\alpha _k ^T} \otimes C_k \right) ^* \left(   \sum _{j =1} ^N \hat{K} _{\alpha _j ^T} \otimes C_j \right) \nn \\
& \geq & 0, \nn \ea and this proves that $\phi$ is completely positive.
\end{proof}
Consider the \emph{free Cauchy kernel}
 \ba (I -ZL ^* ) ^{-1} & := & \sum _{k=0} ^\infty (ZL ^* ) ^k = \sum _{\alpha \in \F ^d } Z^\alpha (L^*) ^\alpha \in L^\infty _d \{ Z \} \nn \\
  & = & \sum _\alpha Z^\alpha (L^{\alpha ^T} ) ^*. \label{FreeCK} \ea
With this definition it follows that
\ba \hat{K} ^L (Z,W) & = &  \phi \left(  (I-ZL^*) ^{-1}  * \circ (I - WL ^* )^{-1}  \right) \nn \\
& =: & \phi \left(  (\sum Z^{\alpha } (L^{\alpha ^T}) ^* )  ( \sum _\beta W^\beta (L^*) ^\beta ) ^* \right) \nn \\
& = &  \phi \left(  (\sum Z^\alpha (L^{\alpha ^T} ) ^* )  \sum _\beta (W^*) ^{\beta ^T} L ^{\beta ^T} \right) \nn \\
& = & \sum Z^\alpha (W^*) ^{\beta ^T} \phi \left ( (L ^{\alpha ^T} ) ^* L ^{\beta ^T} \right). \nn \ea
In the above, $*$ denotes the formal adjoint defined previously.

With these definitions we also have that
$$ H  ^L (Z) =  \phi \left( (I -ZL^* ) ^{-1} (I +ZL ^* ) \right) + i \im{H_\emptyset }, $$ or equivalently,
$$ H ^L (Z) =  \phi \left( 2 (I -ZL^* ) ^{-1} - I \right) + i \im{ H_\emptyset } .$$ This is the left free Herglotz formula, and it is clearly
a non-commutative formal analogue of the classical Herglotz formula (\ref{classicH}) from the introduction, as well as a direct free analogue of the commutative results for $\scr{S} _d (\H)$ obtained in \cite{Jur2014AC,JM}.

This argument is reversible. Given any $\phi \in CP (\mc{A} ; \H )$ define a positive free kernel $\hat{K} = \hat{K} ^L$ and coefficient kernel $\hat{K} _{\alpha, \beta} $ by
$$ \hat{K} ^L (Z,W) := \sum  Z^\alpha (W^*) ^{\beta ^T} \hat{K} _{\alpha , \beta} = \phi \left( (I-ZL^* ) ^{-1} * \circ (I -WL^* ) ^{-1}  \right), $$ and
\be \label{freeHcoker} \hat{K} _{\alpha , \beta } := \phi ( (L^{\alpha ^T}   ) ^* L^{\beta ^T} ). \ee
 Complete
positivity of $\phi$ ensures that this defines a positive coefficient kernel.
If one defines
$$H ^L (Z) := \phi \left( (I-ZL ^* ) ^{-1} (I + Z L ^* ) \right), $$ it follows that $H^L$ is a free holomorphic $\L (\H )$-valued function on the NC unit ball $\Om$, and one can calculate that
\be \hat{K} ^L (Z,W) = \frac{1}{2} \left( H ^L (Z) \hat{k} (Z,W) + \hat{k} (Z,W) H^L (W) ^* \right). \label{HergRK} \ee
Indeed,
\ba \hat{K} ^L (Z,W) & = & \sum _{\alpha , \beta} Z^\alpha (W^* ) ^{\beta ^T} \phi ((L ^{\alpha ^T})^* L ^{\beta ^T} ) \nn \\
& = & \left( \sum _{\alpha ^T \geq \beta ^T } \sum _\beta + \sum _{\beta ^T \geq \alpha ^T} \sum _\alpha \right)   \phi ((L ^{\alpha ^T} ) ^* L ^{\beta ^T} ) - \phi (I) \hat{k} (Z,W). \nn \ea
Consider the first sum. Since $\alpha ^T \geq \beta ^T$ it follows that $\alpha ^T = \beta ^T \ga ^T$ or $\alpha = \ga \beta$ for some $\ga \in \F ^d$. This first sum can then be written as:
\ba \sum _{\ga } \sum _\beta  Z ^{ \ga \beta } (W^*) ^{\beta ^T} \phi \left( (L^{\beta ^T \ga ^T} ) ^* L^{\beta ^T} \right) & = & \sum _{\ga , \beta}  Z^{\ga \beta } (W^* ) ^{\beta ^T} \phi (L ^{\ga ^T} ) ^* \nn \\
&= & \frac{1}{2} \left( H ^L (Z) + \phi (I) \right) \hat{k} (Z,W). \nn \ea
The full calculation then establishes the formula (\ref{HergRK}). Since this is a positive free kernel, it follows that $H ^L \in \scr{L} _d ^+ (\H )$ belongs
to the left free Herglotz-Schur class.

The entire above analysis can be repeated with right free Herglotz-Schur functions. Given a right free $H ^R \in \scr{R} _d ^+ (\H )$ we can define
$\phi \in CP (\mc{A} ; \H )$ by
$$ \phi ( (L^\alpha ) ^* L^\beta ) := \hat{K} ^R _{\alpha , \beta }. $$ Then,
$$ \hat{K} ^{R} (Z, W) = \phi  \left( T \circ (I-ZL^*) ^{-1}   * \circ T \circ (I - WL^* )^{-1}  \right), $$ where
$$ * \circ T \circ (I - WL^* ) ^{-1} =  * \circ T \circ  \sum W^\beta (L^* ) ^\beta = \sum (W^* ) ^{\beta ^T} L ^\beta, $$ and $T$ is the formal transpose defined previously. Also note that $*\circ T = T \circ *$.

In this right case we obtain the right Herglotz formula
$$ H  ^R (Z) := T \circ \phi \left( (I -ZL^* ) ^{-1} (I +ZL ^* ) \right) +i \im{ H _\emptyset } = T \circ H^L (Z). $$
Conversely, given $\phi \in CP (\mc{A} ; \H)$, one can define the right Herglotz function as above and it follows that any $\phi \in CP (\mc{A}; \H )$ corresponds uniquely to a transpose-conjugate pair of left and right free Herglotz-Schur functions $H = (H ^L , H^R ) \in \scr{L} _d ^+ (\H ) \times \scr{R} _d ^+ (\H )$. These arguments and formulas define bijections (modulo imaginary constant operators) between transpose-conjugate Herglotz-Schur pairs and completely positive maps on the free disk operator system. In summary:

\begin{thm} \label{FreeHform}
There are bijections between the three classes of objects:
\bi
    \item[(i)] Transpose-conjugate pairs $B = (B^L , B^R ) \in \scr{L} _d (\H) \times \scr{R} _d (\H )$ of non-unital free Schur class functions.
    \item[(ii)] Transpose-conjugate pairs $H = (H^L , H^R ) \in \scr{L} ^+ _d (\H) \times \scr{R} _d ^+ (\H )$ of free Herglotz-Schur functions.
    \item[(iii)] The positive cone $CP (\A ; \H )$ of completely positive maps from the free disk operator system $\mc{A} + \mc{A} ^*$, $\mc{A} = \mc{A} _d$, into $\L (\H )$.
\ei
The bijection between free Schur class pairs and free Herglotz-Schur class pairs is given by the maps $B \mapsto H_B$ and $H \mapsto B_H$. The bijection (modulo imaginary constants) between
the free Herglotz-Schur classes and $CP (\A ; \H)$, $H = (H^L , H^R) \in \scr{L} _d ^+ (\H) \times \scr{R} _d ^+ (\H ) \leftrightarrow \phi \in CP (\mc{A} ; \H)$, is given by the free Herglotz formulas:
$$ H ^L _B (Z) := \phi \left( (I -ZL^*) ^{-1} (I + ZL^*) \right) +i \im{H_\emptyset}; \quad \quad \mbox{and} \quad H^R _B (Z) := T \circ H^L _B (Z).$$
\end{thm}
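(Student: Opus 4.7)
The plan is to assemble the three bijections from pieces already essentially established in the excerpt. The bijection between (i) and (ii) has been set up in the paragraphs immediately preceding the theorem: given a non-unital transpose-conjugate pair $B = (B^L, B^R)$, the Cayley transforms $H^L_B(Z) := (I - B^L(Z))^{-1}(I + B^L(Z))$ and $H^R_B := T \circ H^L_B$ are well defined on $\Om$ and produce positive free Herglotz kernels via the identity
$$\hat K^L(Z,W) = (I - B^L(Z))^{-1}\, \hat k^L(Z,W)\, (I - B^L(W)^*)^{-1},$$
exhibiting $H_B$ as a transpose-conjugate Herglotz-Schur pair. The inverse Cayley transform $B_H^L := (H^L+I)^{-1}(H^L-I)$ lands in the non-unital free Schur class since $I - B_H^L = 2(H^L + I)^{-1}$, and the two maps are checked to be compositional inverses by a direct algebraic computation on the values at each $Z \in \Om$.

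For the bijection between (ii) and (iii), I would invoke Proposition \ref{Herglotzkern} in the forward direction: given $H^L \in \scr{L}_d^+(\H)$, the map $\phi$ defined by $\phi(I) := \re{H_\emptyset}$, $\phi(L^{\alpha^T})^* := \tfrac{1}{2} H_\alpha$ (for $\alpha \neq \emptyset$) and extended linearly and self-adjointly is a well-defined completely positive map, with the identification $\hat K^L_{\alpha,\beta} = \phi((L^{\alpha^T})^* L^{\beta^T})$. The well-definedness relies on the coefficient coincidences $\hat K_{\la\beta,\beta} = \hat K_{\la,\emptyset}$ already computed in the excerpt, and complete positivity follows from the sums-of-squares lemma and the positivity of the coefficient kernel. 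For the reverse direction, given $\phi \in CP(\A;\H)$, define
$$H^L(Z) := \phi\!\left( (I - ZL^*)^{-1}(I + ZL^*) \right) + i\,\im{H_\emptyset};$$
the calculation in the excerpt, splitting the sum $\sum_{\alpha,\beta} Z^\alpha (W^*)^{\beta^T} \phi((L^{\alpha^T})^* L^{\beta^T})$ into the regions $\alpha^T \geq \beta^T$ and $\beta^T \geq \alpha^T$, establishes $\hat K^L(Z,W) = \tfrac{1}{2}(H^L(Z)\hat k(Z,W) + \hat k(Z,W) H^L(W)^*)$ as a positive free kernel, so $H^L \in \scr{L}_d^+(\H)$.

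To check that these two constructions are inverse (modulo imaginary constant operators), I would compare free Fourier coefficients: running $H^L \mapsto \phi \mapsto \tilde H^L$ recovers the original coefficients $H_\alpha = 2\phi(L^{\alpha^T})^*$, and the real part of the constant term is $\phi(I) = \re{H_\emptyset}$, so $\tilde H^L$ agrees with $H^L$ up to an imaginary scalar matching $\im{H_\emptyset}$. The transpose-conjugate structure is then automatic: the same $\phi$ produces $H^R$ via $\hat K^R_{\alpha,\beta} = \phi((L^\alpha)^* L^\beta)$, which differs from the left coefficient kernel only by index transposition, giving $H^R = T \circ H^L$ directly from the formulas; pushing this through the Cayley bijection yields $B^R = T \circ B^L$, the transpose-conjugate pairing required in (i).

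The main obstacle is bookkeeping: keeping the transpose involution, the formal adjoint on power series (defined in \eqref{freeadj}), and the ordering of left versus right convolution consistent across the three classes, and in particular verifying that the single CP map $\phi$ consistently simultaneously encodes the left and right members of a transpose-conjugate Herglotz pair. All the analytic content — complete positivity (via the GNS/sums-of-squares argument), positivity of the Herglotz kernel, invertibility of $I - B^L(Z)$ — has already been supplied by the preceding lemmas and Proposition \ref{Herglotzkern}, so the proof of the theorem itself reduces to cross-referencing these ingredients and performing the coefficient-level inversion check.
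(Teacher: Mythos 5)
Your proposal is correct and follows essentially the same route as the paper: the Cayley transform for (i)$\leftrightarrow$(ii), Proposition \ref{Herglotzkern} plus the sums-of-squares lemma for the forward direction of (ii)$\to$(iii), and the coefficient-splitting computation yielding (\ref{HergRK}) for the reverse, with the transpose $T$ linking the left and right members; the paper's surrounding text is precisely this assembly, stated as reversible. (Incidentally, your formula $I - B_H^L = 2(H^L + I)^{-1}$ is the correct one; the paper's line omitting the inverse is a typo.)
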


Again, observe that the above formula is formally analogous to the classical Herglotz representation formula (\ref{classicH}) for Herglotz functions on the disk. (It recovers the classical formula in the scalar-valued, single-variable case if we identify AC measures on the unit circle with positive linear functionals on the classical disk algebra.)

\begin{defn}
    We will use the notation $\mu _B \in CP (\A ; \H )$ for the completely positive map which corresponds uniquely to the transpose-conjugate pair $B := (B^L, B^R)$ (equivalently to $H_B = (H_B ^L , H_B ^R)$) by the above theorem. The map $\mu _B$ will be called the \emph{Aleksandrov-Clark map} or non-commutative Aleksandrov-Clark measure of $B$.
\end{defn}

\section{The free Cauchy transforms}

As in \cite{Jur2014AC,JM}, given any $\phi=\mu _B \in CP (\mc{A} ; \H )$ one can construct a Gelfand-Naimark-Segal (GNS)-type space, $F ^2 (\mu _B )$, and associated Stinespring representation $\pi _\phi = \pi _B : \mc{A} + \mc{A} ^* \rightarrow \L (F^2 (\mu _B) )$. Here $B = (B^L , B^R)$ is the unique transpose conjugate pair of free Schur class elements corresponding to $\phi$. This construction relies on the \emph{semi-Dirichlet property} of the free disk algebra $\mc{A}$ \cite{Dav2011}: $$ \mc{A} ^* \mc{A} \subseteq ( \mc{A} + \mc{A} ^* ) ^{-\| \cdot \| }.$$ Briefly, given $\phi = \mu _B$,
consider the algebraic tensor product $ \mc{A} \otimes \H,$ endowed with the pre-inner product
$$ \ip{ a_1 \otimes h_1}{ a_2 \otimes h_2} _B := \ip{h_1}{\mu _B (a_1 ^* a_2) h_2 } _\H. $$ The fact that $\mu _B (a_1 ^* a_2)$, and hence that this pre-inner product is well-defined relies on the semi-Dirichlet property of $\A$. If $N_B$ denotes the closed left $\A$-module (or left ideal in $\A$) of all vectors of length zero in this algebraic tensor product, then $\ip{\cdot}{\cdot} _B$ promotes to an inner product on the quotient space
$$ \frac{\mc{A} \otimes \H}{N_B}, $$ and the Hilbert space completion of this inner product space will be denoted by $F^2 ( \mu _B)$, the \emph{free Hardy space} of $ \mu _B$.
The associated Stinespring representation is defined by $a \mapsto \pi _B (a)$ where
$$ \pi _B (a) (a' \otimes h  + N_B ) := aa' \otimes h + N_B. $$ The representation $\pi _B : \mc{A} \rightarrow \L (F^2 (\mu _B))$ is a unital completely isometric isomorphism which is \emph{$*$-extendible} to a $*$-representation of the Cuntz-Toeplitz $C^*$-algebra $\mc{E} := C^* (\mc{A} )$ (and is well-defined since $N_B$ is a left ideal). In particular it follows that
$\pi _B (L)$ is a row-isometry on $F^2 (\mu _B ) \otimes \C ^d$. This yields the Stinespring dilation formula:
$$ \mu _B ( L^\alpha )  =  [I \otimes ] _B ^* \pi _B (L ) ^\alpha [I \otimes ] _B; \quad \quad \alpha \in \F ^d, $$ where the bounded linear embedding
$[I\otimes ]_B : \H \rightarrow F^2 (\mu _B)$ is defined by
$$ [I\otimes ]_B h := I\otimes h + N_B \in F^2 ( \mu _B), $$ and $\| [I\otimes ] _B \| ^2 = \| \mu _B (I) \| $. This embedding is isometric if and only if $\mu _B$ is unital.

Recall that a CP map $\phi = \mu _B \in CP (\mc{A} ; \H )$ defines both a left and right free Herglotz space with free kernels $\hat{K} ^L , \hat{K} ^R$, respectively.
In what follows we consider the right case. The left case is, as usual, analogous. The formal point evaluation map $\hat{K} ^R _Z$ is given by the free formal series:
$$ \hat{K} _Z  := \sum _\alpha (Z^*) ^{\alpha ^T} \hat{K}  _\alpha ^R. $$ Let $B^R$ be the right Schur class element defined by $\mu _B$.
We define the \emph{free right Cauchy transform}:
$$ \hat{\mc{C}} _R : F^2 (\mu _B ) \rightarrow \scr{H} ^{R,+}  (H_B), $$ by
\be \hat{\mc{C} } _R \left( * \circ T  \circ [I \otimes ] _B ^* (I -Z\pi _B (L) ^* ) ^{-1}  \right) := \hat{K} _Z ^R  \in \L (\H) \{ Z^* \}. \label{coeffequal} \ee
Expanding the above in free formal power series,
$$ \hat{\mc{C} } _R \left( \sum _{\alpha} (Z^* ) ^{\alpha ^T} \pi _B (L ) ^\alpha [I \otimes ] _B \right) = \hat{K} _Z ^R, $$ so that
in terms of coefficient maps,
$$ \hat{\mc{C}} _R (\pi _B (L ) ^\alpha [I\otimes ] _B) = \hat{K} _\alpha ^R. $$
\begin{remark}
Both the left and right hand sides of the above equation (\ref{coeffequal}) are free power series in $Z^*$. To say that they are equal is to say that their coefficients are equal. We then extend the action of $\hat{\mc{C} } _R$ to free power series by linearity.
\end{remark}

The free right Cauchy transform is an onto linear isometry since:
\ba & & \left( *\circ T \circ [I\otimes ] _B ^* (I -Z\pi _B (L) ^* ) ^{-1}  \right) ^* \left( * \circ T \circ [I\otimes _B ] ^* (I - W\pi _B (L) ^*) ^{-1}  \right) \nn \\
& = &  [I\otimes ]_B ^* T \circ (I - Z \pi _B (L) ^* ) ^{-1}  * \circ T \circ [I\otimes ] _B ^* (I - W \pi _B (L) ^* ) ^{-1} \nn \\
& = & [I\otimes ] _B ^* \sum _{\alpha} Z^ \alpha \pi _B (L ^\alpha ) ^*  \sum _\beta (W^*) ^{\beta ^T} \pi _B (L ) ^\beta [ I \otimes ] _B \nn \\
& = & \sum _{\alpha, \beta} Z^\alpha (W^* ) ^{\beta ^T} [I \otimes ] _B ^* \pi _B (L ^\alpha ) ^* \pi _B (L) ^\beta [I \otimes ] _B  \nn \\
& =& \sum _{\alpha , \beta } Z^{\alpha } (W^*) ^{\beta ^T}  \mu _B \left( (L^\alpha )^* L^\beta \right) \nn \\
& = & \hat{K} ^R (Z,W), \nn \ea or, equivalently,
\ba & &  \left( \hat{\mc{C}} _R \pi _B (L) ^\alpha [I \otimes ] _B \right) ^* \left( \hat{\mc{C}} _R \pi _B (L) ^\beta [I \otimes ] _B \right) \nn \\
& = & [I\otimes ] _B ^* \pi _B (L^\alpha ) ^* \pi _B (L ) ^\beta [I \otimes ] _B \nn \\
& = & \hat{K} ^R _{\alpha, \beta } = \mu _B \left( (L^\alpha ) ^* L^\beta \right) \quad \left( = \hat{K} ^L _{\alpha ^T , \beta ^T} \right). \nn \ea

The \emph{weighted free right Cauchy transform} $\hat{\mc{F} } _R  : F^2 (\mu _B ) \rightarrow \mathscr{H} ^R (B)$ is then defined by
$$\hat{\mc{F}} _R := (I - B^R (Z) ) \bullet_R \hat{\mc{C} } _R ,$$ an onto isometry. As in Lemma \ref{canformmult} of Section \ref{Herglotzsect},
free right multiplication by $(I - B^R (Z) )$ is an isometry of the free right Herglotz space $\scr{H} ^{R, +} (B)$ onto the free right deBranges-Rovnyak space
$\scr{H} ^R (B)$, and the inverse or Hilbert space adjoint of this isometry acts as free right multiplication by $(I - B^R (Z) ) ^{-1}$ so that
$M _{(I-B)} ^R = (M_{(I-B) ^{-1} } ^R ) ^*$.
It follows that
\ba \hat{\mc{F}} _R \left( * \circ T \circ [I\otimes ] _B ^* (I -Z \pi _B (L) ^* ) ^{-1} \right) & = & (M^R _{(I-B)^{-1}} ) ^* \hat{K} _Z ^R \nn \\
& = & \hat{k} _z ^R \bullet _R (I - B (Z) ^* ) ^{-1}. \label{wCTformula} \ea

Similarly we can define the left free Cauchy and weighted Cauchy transforms, $\lft, \lct$ by
$$ \hat{\mc{C} } _L \left( * \circ [I\otimes ] _B (I -Z\pi _B (L) ^* ) ^{-1} \right) := \hat{K} ^L _Z, $$ or on coefficient maps as
$$ \lct (\pi _B (L ) ^{\alpha ^T}) ^* [I\otimes ] _B := \hat{K} ^L  _\alpha, $$ and $\lft := (I - M ^L _B ) \circ \lct$.

\begin{prop}
    Let $B := (B^L , B^R) \in \scr{L} _d (\H) \otimes \scr{R} _d (\H)$ be a transpose conjugate pair. The onto isometry $W_T := \lft \rft ^* : \scr{H} ^R (B) \rightarrow \scr{H} ^L (B)$ acts by transposition: If $F (Z) = \sum _\alpha Z^\alpha F_\alpha \in \scr{H} ^R (B)$ then
$(W_T F) (Z) = \sum _\alpha Z^{\alpha ^T} F_\alpha. $
\end{prop}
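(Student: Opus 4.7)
The strategy is to show that $\lft = T\circ\rft$ on all of $F^2(\mu_B)$, where $T$ denotes coefficient-wise transposition of free formal power series. Since $\rft$ is an onto isometry, $\rft\rft^* = I_{\scr{H}^R(B)}$, and hence $W_T = \lft\rft^* = T$.

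To verify $\lft = T\circ\rft$, it suffices to check on the dense set
$$ \{v_{\alpha,h} := \pi_B(L)^\alpha [I\otimes]_B h \mid \alpha\in\F^d,\ h\in\H\}, $$
whose totality in $F^2(\mu_B)$ follows from the GNS construction of $F^2(\mu_B)$ (i.e., $\pi_B$ is a $*$-representation of $\mc{E}$ and the range of $[I\otimes]_B$ is cyclic). The defining formulas for the unweighted Cauchy transforms give
$$ \rct(v_{\alpha,h}) = \hat{K}^R_\alpha h \qquad \text{and} \qquad \lct(v_{\alpha,h}) = \hat{K}^L_{\alpha^T}\, h, $$
the transposed index on the left side being forced by the $\pi_B(L^{\alpha^T})$ factors appearing in the power-series expansion of the generating series $*\circ [I\otimes]_B^*(I - Z\pi_B(L)^*)^{-1}$ defining $\lct$.

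The core algebraic step is to establish three identities: (i) $T(\hat{K}^R_\alpha) = \hat{K}^L_{\alpha^T}$, which follows by comparing the explicit coefficient kernels $\hat{K}^R_{\beta,\alpha} = \mu_B((L^\beta)^* L^\alpha)$ and $\hat{K}^L_{\beta,\alpha} = \mu_B((L^{\beta^T})^* L^{\alpha^T})$ and relabelling the summation variable; (ii) the anti-homomorphism rule $T\bigl(a(Z)\bullet_R b(Z)\bigr) = T(a)(Z)\cdot T(b)(Z)$, which is immediate from $(\beta\alpha)^T = \alpha^T\beta^T$ applied term by term to the convolution formula for $\bullet_R$; and (iii) $T(I - B^R) = I - B^L$, which is the defining condition of the transpose-conjugate pair $B = (B^L, B^R)$. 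Combining these with $\rft = M^R_{I-B^R}\circ\rct$ and $\lft = M^L_{I-B^L}\circ\lct$ yields
$$ T\bigl(\rft(v_{\alpha,h})\bigr) = T\bigl((I - B^R(Z))\bullet_R \hat{K}^R_\alpha h\bigr) = (I - B^L(Z))\cdot \hat{K}^L_{\alpha^T}\,h = \lft(v_{\alpha,h}), $$
so $\lft = T\circ\rft$ by density and boundedness of $\rft,\lft$. Reading off the coefficients of the resulting power series proves exactly $(W_T F)(Z) = \sum_\alpha Z^{\alpha^T} F_\alpha$.

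The only real hurdle is bookkeeping: one must track the two multiplication conventions $\cdot$ and $\bullet_R$ and ensure the various transposes land correctly in each kernel and each multiplier. Once the three identities above are in hand, no further computation is needed. As a byproduct, the argument shows that coefficient-wise transposition is a well-defined onto isometry $\scr{H}^R(B) \to \scr{H}^L(B)$, a fact which is otherwise not obvious from the intrinsic definitions of the two free deBranges--Rovnyak spaces.
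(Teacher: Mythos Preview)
Your proof is correct and complete; the paper itself omits the argument entirely, stating only that ``the proof is easily verified, and omitted.'' Your approach --- checking $\lft = T\circ\rft$ on the spanning set $\{\pi_B(L)^\alpha[I\otimes]_B h\}$ via the three algebraic identities you isolate, then invoking boundedness of $\lft,\rft$ and of the coefficient evaluation maps to pass to the limit --- is exactly the routine verification the authors had in mind.
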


The proof is easily verified, and omitted.

\section{The Free Clark formulas} \label{Clarksection}

Assume that $\phi =\mu _B \in CP (\mc{A} ; \H )$ where $B = (B^L , B^R) \in \scr{L} _d (\H ) \times \scr{R} _d (\H )$ is a transpose-conjugate pair of free (operator-valued) Schur multipliers. In this section we will develop right free analogues of the Clark unitary perturbation formulas, the left case is analogous. Our approach and proof is a direct free analogue of the proof of the Clark intertwining formulas for the commutative setting of Schur $b \in \scr{S} _d (\H )$. \cite[Theorem 4.16, Section 4]{JM}.

A significant complication appears in the commutative Aleksandrov-Clark theory as soon as $d>1$. Namely, in contrast to the classical single-variable theory \cite{Sarason-dB}, the
deBranges-Rovnyak spaces $\scr{H} (b)$ for $b \in \scr{S} _d (\H )$ are generally not invariant for the adjoints of the components of the Arveson $d$-shift on $H^2 _d$ \cite{Ball2008}. The appropriate replacement for
the restriction of the backward shift in the several-variable theory is a \emph{contractive Gleason solution} for $\scr{H} (b)$ \cite{Ball2008,Ball2010,Ball2011dBR,Glea1964,Alpay2002}. Here, (see \emph{e.g.} \cite[Section 4]{JM}), a contractive Gleason
solution for $\scr{H} (b)$ is a row contraction $X : \scr{H} (b) \otimes \C ^d \rightarrow \scr{H} (b)$ which obeys
$$ z(X^*f ) (z) = f(z) - f(0) ; \quad \quad f \in \scr{H} (b), \ z \in \B ^d, $$ and which is contractive in the sense that
$$ X X ^* \leq I  - k_0 ^b (k_0 ^b) ^*.$$   Analogously, a map $\mbf{b} : \H \rightarrow \scr{H} (b) \otimes \C ^d$ is called a contractive Gleason solution for $b \in \scr{S} _d (\H )$ if
$$ z \mbf{b} (z) = b(z) - b(0); \quad \quad z \in \B ^d, $$ and if it is contractive in the sense that
$$ \mbf{b} ^* \mbf{b} \leq I_\H - b(0) ^* b(0).$$ Observe that in the classical single-variable case, the unique contractive Gleason solutions for $\scr{H} (b)$ and $b$ are given by $X = S^*| _{\scr{H} (b)}$ and $\mbf{b} = S^* b$, where $S$ is the shift on $H^2 (\D)$. In contrast, as soon as $d>1$, contractive Gleason solutions for $\scr{H} (b)$ and $b$ are generally non-unique (but they can be parametrized in a natural way, see \cite[Section 4]{JM}).

Every contractive Gleason solution for $\scr{H} (b)$ is determined by a contractive Gleason solution for $b \in \scr{S} _d (\H)$: Given any contractive Gleason solution $X$ for $\scr{H} (b)$,
there is a contractive Gleason solution $\mbf{b}$ for $b$ so that
 \be X^* k_w ^b = w^* k_w ^b - \mbf{b} b(w ) ^*; \quad \quad w \in \B ^d, \quad \mbox{\cite[Section 4]{JM}}. \label{GSonto} \ee Any contractive Gleason solution $X$ for $\scr{H} (b)$ necessarily obeys: $$ k_w ^b = (I -X w^* ) ^{-1} k_0 ^b. $$

Remarkably, the free theory is, in several ways, simpler and more closely parallels the classical single variable theory. Any right free deBranges-Rovnyak space $\scr{H} ^R (B)$ for $B = (B^L, B^R )\in \scr{L} _d (\H ) \times \scr{R} _d (\H)$ is always invariant for $L ^* \otimes I _\H$, the adjoint of the left free shift (similarly $\scr{H} ^L (B)$ is invariant for $R^* \otimes I_\H$). Moreover, if one defines contractive (right) free Gleason solutions $\hat{X} ^R , \mbf{B} ^R$ for $\scr{H} ^R (B)$ and $B$ as in the commutative setting, then these are always unique and given by
\be (\hat{X} ^R) ^* := (L^* \otimes I_\H) | _{\scr{H} ^R (B) } ; \quad \quad \mbox{and} \quad \quad \mbf{B} ^R := (L^* \otimes I_\H) B^R. \label{uniqueFGS} \ee  (In the left case we obtain $\mbf{B} ^L = (R^*\otimes I_\H)  B^L$ and $(\hat{X} ^L) ^* := (R^* \otimes I_\H) | _{\scr{H} ^L (B)}$.)

Namely, a contractive Gleason solution for any right free deBranges-Rovnyak space $\scr{H} ^R (B)$ can be defined as a row-contraction $\hat{X} : \scr{H} ^R (B) \otimes \C ^d \rightarrow \scr{H} ^R (B)$ such that
$$ Z ((\hat{X} ^R) ^* F) (Z) = F(Z) - F _\emptyset; \quad \quad F \in \scr{H} ^R (B), $$
and which is contractive in the sense that
$$ \hat{X}^R (\hat{X} ^R) ^* \leq I - \hat{k} ^R _\emptyset (\hat{k} ^R _\emptyset ) ^*. $$ This definition is equivalent to
$ \hat{k} ^R _\emptyset = (I -\hat{X} ^R Z^* ) \hat{k} ^R _Z$, or,
$$ \hat{k} ^R _Z = (I - \hat{X}^R  Z^* ) ^{-1} \hat{k} ^R _\emptyset.$$ Similarly, a contractive Gleason solution for $B^R$ is a map
$\mbf{B} ^R : \H \rightarrow \scr{H} ^R (B) \otimes \C ^d$ which obeys
$$ Z \mbf{B} ^R (Z) = B^R (Z) - B ^R _\emptyset, $$ and which is contractive in the sense that
$$ (\mbf{B} ^R ) ^* \mbf{B} ^R \leq I - (B^R _\emptyset) ^* B^R _\emptyset. $$

\begin{remark} \label{uniqueGS}
Exactly as in the commutative setting, \cite[Theorem 4.4]{JM}, one can show that if $\mbf{B}$ is any contractive Gleason solution for $B^R$ then
$$ \hat{X}  ^* \hat{k} _W ^R :=  \hat{k} _W ^R W^* - \mbf{B} \bullet_R B(W) ^*, $$ defines a contractive Gleason solution for $\scr{H} ^R (B)$. The transfer function theory of \cite[see Remark 4.4]{Ball2006Fock}, shows that $\scr{H} ^R (B), B^R$ always have the unique contractive Gleason solutions given by the formulas (\ref{uniqueFGS}) above.
\end{remark}

\begin{prop} \label{freeGSform}
The unique contractive Gleason solution $\mbf{B} ^R : \H \rightarrow \scr{H} ^R (B ) \otimes \C ^d$ for $B^R$ is given by the formula
$$ \mbf{B} ^R = (L^* \otimes I_\H ) B^R = \rft \pi _B (L) ^* [I\otimes ] _B (I - B_\emptyset ). $$
\end{prop}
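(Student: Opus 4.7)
My plan is to handle the two asserted equalities separately. For the first, $\mathbf{B}^R = (L^* \otimes I_\H) B^R$, the key fact is already essentially recorded in Remark \ref{uniqueGS}: by the free transfer-function theory of \cite{Ball2006Fock}, this is the unique contractive Gleason solution for $B^R$. The verification is elementary. Writing $B^R(Z) = \sum_\alpha Z^\alpha B_\alpha$, the $k$-th component of $(L^*\otimes I_\H)B^R$ has free power series $\sum_{\alpha} Z^\alpha B_{k\alpha}$, so the Gleason intertwining $Z\mathbf{B}^R(Z) = B^R(Z) - B_\emptyset$ is immediate from the decomposition $B^R(Z) - B_\emptyset = \sum_{k=1}^d Z_k \sum_{\alpha} Z^\alpha B_{k\alpha}$. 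The contractive bound $(\mathbf{B}^R)^* \mathbf{B}^R \leq I - B_\emptyset^* B_\emptyset$ follows from Pythagoras applied to the orthogonal decomposition $M_B^R(1 \otimes I_\H) = (1\otimes B_\emptyset) + (L\otimes I_\H)(L^*\otimes I_\H)M_B^R(1\otimes I_\H)$ in $F^2_d \otimes \H$, combined with $\|M_B^R\| \leq 1$.

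The substantive content is the second equality. I will prove it by computing both sides as elements of $\scr{H}^R(B) \otimes \C^d$ and matching free power series coefficients. For each $k \in \{1,\ldots,d\}$, $\beta \in \F^d$, and $\chi, h \in \H$, the reproducing kernel property of $\scr{H}^R(B)$ reduces the task to verifying that
\begin{equation*}
\ip{\hat{K}^R_\beta \chi \otimes e_k}{\rft \pi_B(L)^*[I\otimes]_B(I - B_\emptyset)h}_{\scr{H}^R(B) \otimes \C^d} = \ip{\chi}{B_{k\beta} h}_\H.
\end{equation*}
Using the isometry property of $\rft$ together with the coefficient identity $\hat{\mc{C}}_R(\pi_B(L^\beta)[I\otimes]_B \chi) = \hat{K}^R_\beta \chi$ and the fact that right multiplication by $(I - B^R)$ is the canonical isometry from $\scr{H}^{R,+}(H_B)$ onto $\scr{H}^R(B)$ (the right-sided analogue of Lemma \ref{canformmult}), the inner product on the left can be pulled back to $F^2(\mu_B)$ and then collapsed via the Stinespring/GNS identity $[I\otimes]_B^* \pi_B(L^{k\beta})^*[I\otimes]_B = \mu_B((L^{k\beta})^*)$. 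The resulting $\mu_B$-coefficient is then converted to the Schur coefficient $B_{k\beta}$ using the Herglotz-Schur bijection from Theorem \ref{FreeHform}; here the factor $(I - B_\emptyset)$ is exactly the constant term of $(I - B^R(Z))$ that must be stripped off when passing from the Herglotz/Cauchy normalization to the deBranges-Rovnyak/Schur normalization.

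The main obstacle I anticipate is the bookkeeping of the several involutions and conventions in play: the formal transpose $T$, the formal adjoint $*$, the distinction between $\bullet_L$ and $\bullet_R$ multiplications of free power series, and the passage between the $\mu_B$-coefficients, the Herglotz coefficients $H_\alpha$, and the Schur coefficients $B_\alpha$ (including the twist by transposition relating $B^L$ and $B^R$). In particular, unwinding the identity
\begin{equation*}
(H_B^R(Z) + I)\bullet_R B^R(Z) = H_B^R(Z) - I
\end{equation*}
at the level of free Fourier coefficients to show that $(I - B_\emptyset)$ is the correct normalization that produces $B_{k\beta}$ from the $\mu_B$-expression is the central algebraic step; the rest of the computation is a direct free analogue, coefficient by coefficient, of the corresponding single-variable Clark calculation.
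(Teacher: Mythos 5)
Your plan reorganizes the logic of the paper's proof rather than reproducing it, and in doing so you replace the paper's cleanest move with the hardest possible one. The paper takes $\mbf{A} := \rft\,\pi_B(L)^*[I\otimes]_B(I - B_\emptyset)$ and shows directly, by a computation on the formal point evaluations $\hat{k}^R_Z$ (using the identity $\rft[I\otimes]_B = \hat{k}^R_\emptyset(I - B_\emptyset^*)^{-1}$ and the relation between $\hat{k}^R$ and $\hat{K}^R$), that $Z\mbf{A}(Z) = B^R(Z) - B_\emptyset$, i.e.\ that $\mbf{A}$ \emph{is} a Gleason solution. Contractivity of $\mbf{A}$ is then immediate because $\rft$ is an isometry, $\pi_B(L)\pi_B(L)^* \le I$, and $(I-B_\emptyset^*)\mu_B(I)(I-B_\emptyset) = I - B_\emptyset^*B_\emptyset$. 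The equality $\mbf{A} = (L^*\otimes I_\H)B^R$ is then a one-line appeal to uniqueness (Remark~\ref{uniqueGS}), not something to be established by coefficient matching. You invert this: you take the uniqueness and the identity $\mbf{B}^R = (L^*\otimes I_\H)B^R$ as the starting point and attempt to \emph{directly} identify the Cauchy-transform expression with $(L^*\otimes I_\H)B^R$ coefficient by coefficient. That is a valid strategy in principle, but it is strictly harder than what the paper does, and you have not carried it out.

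Two concrete gaps. First, in your argument for contractivity of $(L^*\otimes I_\H)B^R$, the Pythagoras estimate you describe lives in the ambient Fock-space norm, giving $\|(L^*\otimes I_\H)B^R h\|^2_{F^2_d\otimes\H\otimes\C^d} \le \|h\|^2 - \|B_\emptyset h\|^2$. But the Gleason contractivity condition $(\mbf{B}^R)^*\mbf{B}^R \le I - B_\emptyset^* B_\emptyset$ refers to the $\scr{H}^R(B)$-norm, which \emph{dominates} the $F^2$-norm (contractive containment goes the wrong way for you). So the $F^2$ estimate does not yield the required bound. This is why the paper makes contractivity a statement about $\mbf{A}$ (where it follows from the isometry of $\rft$) and appeals to the reference for uniqueness rather than trying to re-derive contractivity of $(L^*\otimes I_\H)B^R$ by hand. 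Second, the central step of your coefficient-matching plan --- pulling $\ip{\hat{k}^R_\beta\chi\otimes e_k}{\,\cdot\,}$ back through $\rft$ to $F^2(\mu_B)$ --- is not a simple substitution: $\rft^*$ acts by the right multiplier $(I-B^R(Z))^{-1}$ composed with $\rct^*$, so $\rft^*\hat{k}^R_\beta\chi$ is a convolution over $\{\gamma: \gamma\alpha = \beta\}$ of $\pi_B(L)^\alpha[I\otimes]_B$ against the Taylor coefficients of the weight, not the single term $\pi_B(L)^\beta[I\otimes]_B\chi$. You flag this as bookkeeping and point to the Schur--Herglotz relation $(H^R_B+I)\bullet_R B^R = H^R_B - I$ as the key, which is the right instinct, but the calculation that makes the weight and the $(I-B_\emptyset)$ normalization cancel to leave exactly $B_{k\beta}$ is precisely the non-trivial content of the proposition, and in your write-up it remains an unproven claim. (There is also a notational slip: the displayed inner product should involve $\hat{k}^R_\beta$, the coefficient evaluation for the deBranges--Rovnyak space, not the Herglotz coefficient $\hat{K}^R_\beta$.) In short, the scaffolding is right, but the two places where you defer to ``bookkeeping'' are exactly where the proof lives, and one of your two stated shortcuts (the Pythagoras estimate) would not close even if executed carefully.
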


\begin{proof}
Write $B := B^R$ and let  $\mbf{A} '  := \rft \pi _B (L) ^* [I \otimes ] _B $. Then,
\ba Z \mbf{A} '  (Z) & = & Z  (\hat{k} _Z ^{R} \otimes I_d  ) ^* \mbf{A} ' \nn \\
& = &  \left( \rft \pi_B (L) Z ^* (\rft) ^* \hat{k} _Z ^R \right) ^* \rft [I \otimes ] _B. \nn \ea
Since $\rft := M ^R _{(I -B^R)} \rct = (M^R _{(I-B^R) ^{-1} } ) ^* \rct$, it follows that
$$ \rft [I \otimes ] _B = \hat{k} ^R _\emptyset (I - B_\emptyset ^* ) ^{-1}. $$
The bracketed term is then
\ba & &  \rft \sum _{j=1} ^d (Z^*) ^{j} \pi _B (L _j)(\rct) ^* (M^R _{(I-B)}) ^* \hat{k} ^R _Z   \nn \\
& = & \rft \sum _{j=1} ^d (Z^*) ^{j} \pi _B (L _j) \left( \sum _\alpha (Z^*) ^{\alpha ^T} \pi _B (L^\alpha ) [I\otimes ] _B \right) \bullet_R (I -B(Z) ) ^* \nn \\
& = & \rft \sum _{j=1} ^d \sum _\alpha (Z^*) ^{j\alpha ^T} \pi _B (L) ^{j\alpha} [I\otimes ] _B \bullet_R (I -B(Z)) ^*  \nn \\
& = &\rft \left( \sum _\alpha (Z^* ) ^{\alpha ^T} \pi _B (L) ^\alpha [I \otimes ] _B - [I \otimes ] _B \right) \bullet_R (I - B(Z)) ^* \nn \\
& = & \hat{k} _Z ^R - \hat{k} ^R _\emptyset (I - B_\emptyset ^* ) ^{-1}  (I - B(Z) ) ^*. \nn \ea
It follows that
\ba Z \mbf{A} ' (Z) & = &  \left( \hat{k} _Z ^R - \hat{k} ^R _\emptyset (I - B_\emptyset ^* ) ^{-1}  (I - B(Z) ) ^* \right) ^* \hat{k} ^R _\emptyset (I - B _\emptyset ^* ) ^{-1} \nn \\
& = & \hat{k} ^R  (Z, \emptyset) (I - B_\emptyset ^* ) ^{-1} - (I -B(Z) ) (I - B_\emptyset ) ^{-1} (I - B_\emptyset B_\emptyset ^* ) ( I - B_\emptyset ^* ) ^{-1} \nn \\
& = & (I - B(Z) B_\emptyset ^*) (I - B_\emptyset ^* ) ^{-1} - (I - B(Z) ) ( I - B_\emptyset ) ^{-1} (I - B_\emptyset B_\emptyset ^* ) ( I - B_\emptyset ^* ) ^{-1} \nn \\
& = & (I - B(Z) ) \bullet_R \hat{K} ^R (Z , \emptyset ) - \frac{1}{2} (I - B(Z) ) (H _\phi + H _\phi ^* ) \nn \\
& = & \frac{1}{2} ( I -B(Z) ) \bullet_R ( H _B (Z) - H _\emptyset ) \nn \\
& = & ( B(Z) - B_\emptyset ) (I - B_\emptyset ) ^{-1}. \nn \ea Hence $\mbf{A} := \mbf{A} ' (I - B _\emptyset)$ as defined above is a Gleason solution.

To see that $\mbf{A}$ is contractive note that if $B^R$ is a free lift of $b \in \scr{S} _d (\H)$,
\ba (\mbf{A}^* \mbf{A} ) & \leq & (I - B_\emptyset ^* ) [I \otimes ] _B ^* [ I \otimes ] _B (I - B_\emptyset ) \nn \\
& = & (I - b(0) ^* ) K ^b (0, 0) (I - b(0 ) ) \nn \\
& = & I - b(0) ^* b(0)  = I - B_\emptyset ^* B_\emptyset. \nn \ea By the uniqueness of the contractive Gleason solution for $B^R$, $\mbf{A} = \mbf{B} ^R = (L^* \otimes I_\H) B^R$ (Remark \ref{uniqueGS}).
\end{proof}

\begin{thm}{ (right free Clark Intertwining)}
Let $B = (B^L , B^R) \in \scr{L} _d (\H) \otimes \scr{R} _d (\H)$ be a transpose conjugate pair of free Schur multipliers.
The image of the adjoint of the row isometry $\pi _B (L)$ under the weighted right free Cauchy transform is a co-isometric perturbation of the restriction of $L^* \otimes I_\H$ to the (left free shift co-invariant) right deBranges-Rovnyak space $\scr{H} ^R (B)$:
$$ \rft \pi _B (L) ^* (\rft) ^* = L ^* \otimes I _\H | _{\scr{H} ^R (B) } + \mbf{B} ^R  (I - B_\emptyset ) ^{-1} (\hat{k} ^B _\emptyset) ^*, $$ where
$\mbf{B} ^R = (L^* \otimes I _\H ) B^R : \H \rightarrow \scr{H} ^R (B) \otimes \C ^d$ is the unique contractive Gleason solution for $B^R$.
\end{thm}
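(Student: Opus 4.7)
My plan is to verify the claimed operator identity on $\scr{H}^R(B)$ by treating it as a formal power-series identity in $W^*$, applied to $\hat{k}_W^R = \sum_\alpha (W^*)^{\alpha^T}\hat{k}_\alpha^R$, whose coefficients span $\scr{H}^R(B)$. Every operator appearing on either side acts on the Hilbert-space factor only, and therefore commutes with right $\bullet_R$-multiplication by $\L(\H)$-valued formal series in $W^*$. In view of formula (\ref{wCTformula}), $\hat{k}_W^R = \rft\tilde\psi_W \bullet_R (I-B(W)^*)$, so it suffices to check the identity after applying both sides to $\rft\tilde\psi_W = \hat{k}_W^R\bullet_R(I-B(W)^*)^{-1}$, where
\[
\tilde\psi_W := *\circ T\circ [I\otimes]_B^*(I-W\pi_B(L)^*)^{-1} = \sum_\alpha (W^*)^{\alpha^T}\psi_\alpha, \qquad \psi_\alpha := \pi_B(L)^\alpha[I\otimes]_B,
\]
is the formal free Cauchy series whose coefficients span $F^2(\mu_B)$.

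For the left-hand side, since $\rft^*\rft = I$, I reduce to computing $(\rft\otimes I_d)\pi_B(L)^*\tilde\psi_W$. Splitting the index $\alpha\in\F^d$ into $\emptyset$ and $j\alpha'$ with $j\in\{1,\ldots,d\}$, and invoking the row-isometry relations $\pi_B(L_i)^*\pi_B(L_j) = \delta_{ij}I$, one obtains
\[
\pi_B(L)^*\tilde\psi_W = \pi_B(L)^*[I\otimes]_B + \tilde\psi_W W^*,
\]
where $W^*:=(W_1^*,\ldots,W_d^*)^T$ is the formal column of adjoint variables. Applying $\rft\otimes I_d$ and using Proposition \ref{freeGSform} to identify $(\rft\otimes I_d)\pi_B(L)^*[I\otimes]_B = \mbf{B}^R(I-B_\emptyset)^{-1}$, the LHS evaluated at $\rft\tilde\psi_W$ becomes
\[
\mbf{B}^R(I-B_\emptyset)^{-1} + \bigl(\hat{k}_W^R\bullet_R(I-B(W)^*)^{-1}\bigr)\cdot W^*.
\]

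For the right-hand side, I apply the two summands to $\hat{k}_W^R\bullet_R(I-B(W)^*)^{-1}$ in turn. The Gleason identity from Remark \ref{uniqueGS} handles the first summand via $(L^*\otimes I_\H)|_{\scr{H}^R(B)}\hat{k}_W^R = \hat{k}_W^R W^* - \mbf{B}^R\bullet_R B(W)^*$; the deBranges--Rovnyak kernel formula of Example \ref{FreedBReg} evaluated at $Z=\emptyset$ gives $(\hat{k}_\emptyset^R)^*\hat{k}_W^R = \hat{k}^R(0,W) = I - B_\emptyset\bullet_R B(W)^*$ for the second. Equating LHS and RHS and cancelling the common term $(\hat{k}_W^R\bullet_R(I-B(W)^*)^{-1})\cdot W^*$, the remaining identity reads
\[
\mbf{B}^R(I-B_\emptyset)^{-1} = \bigl[-\mbf{B}^R\bullet_R B(W)^* + \mbf{B}^R(I-B_\emptyset)^{-1}\bigl(I-B_\emptyset\bullet_R B(W)^*\bigr)\bigr]\bullet_R(I-B(W)^*)^{-1}.
\]
Writing $\mbf{B}^R = \mbf{B}^R(I-B_\emptyset)^{-1}(I-B_\emptyset)$ inside the first term of the bracket, the bracket simplifies algebraically to $\mbf{B}^R(I-B_\emptyset)^{-1}(I-B(W)^*)$, and the right-hand side collapses to $\mbf{B}^R(I-B_\emptyset)^{-1}$, matching the left-hand side.

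The main obstacle I foresee is careful formal-series bookkeeping. In particular, one must justify that right $\bullet_R$-multiplications by the formal column $W^*$ and by $(I-B(W)^*)^{\pm 1}$ commute with $(\rft\otimes I_d)$; this holds because $\rft$ acts on the Hilbert-space factor while these multiplications act on the formal-variable and $\L(\H)$ side. One must also argue that the formal identity verified on $\rft\tilde\psi_W$ implies the operator identity on $\scr{H}^R(B)$, which follows from density of $\{\hat{k}_\alpha^R h : \alpha\in\F^d,\, h\in\H\}$ in $\scr{H}^R(B)$ together with the non-unitality of $B$, which makes $(I-B(W)^*)$ invertible as a formal power series with constant term $(I-B_\emptyset^*)$.
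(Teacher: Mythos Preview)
Your proposal is correct and follows essentially the same approach as the paper. Both arguments verify the operator identity on formal kernel maps via the row-isometry decomposition $\pi_B(L)^*\tilde\psi_W = \pi_B(L)^*[I\otimes]_B + \tilde\psi_W W^*$, invoke Proposition~\ref{freeGSform} and the Gleason identity of Remark~\ref{uniqueGS}, and close with the same algebraic simplification $-(I-B_\emptyset)B(W)^* + I - B_\emptyset B(W)^* = I - B(W)^*$; the only cosmetic difference is that you test on $\rft\tilde\psi_W = \hat{k}_W^R\bullet_R(I-B(W)^*)^{-1}$ while the paper tests on $\hat{k}_W^R$ itself, which merely shifts the factor $(I-B(W)^*)^{\pm 1}$ from the LHS computation to the RHS computation.
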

The left free Clark intertwining formulas are analogous and computed similarly. The proof below is formally very similar to the Clark intertwining result for the commutative setting of $b \in \scr{S} _d (\H)$, established in \cite[Theorem 4.16, Section 4]{JM}.

\begin{remark} \label{qeremark}
As shown in \cite{JM}, $\pi _B (L)$ is a Cuntz unitary (an onto row isometry) if and only if the image $b \in \scr{S} _d (\H )$ of $B$ under the Davidson-Pitts symmetrization (quotient) map is \emph{quasi-extreme}, \emph{i.e.} if and only if
$$ H^2 _ 0 (\mu _B) := \bigvee _{\n \in \N ^d ; \ \n \neq 0 } L^\n \otimes \H = \bigvee _{\n \in \N ^d } L^\n \otimes \H =: H^2  (\mu _B ) \subset F^2 (\mu _B ),$$ (at least in the case where $\dim{\H } < \infty$, see Remark \ref{NTkernprob}). In the several-variable theory, $H^2 (\mu _b)$ and $H^2 _0 (\mu _b)$ play the role of the classical analytic subspaces obtained as the closure of the analytic polynomials, and the closed linear span of the non-constant analytic monomials in $L^2 (\mu _b)$ when $d=1$ and $\mu _b$ is an AC measure.

If $\pi _B (L)$ is a Cuntz unitary, then the image of $\pi _B (L) ^*$ under the weighted right Cauchy transform
is a Cuntz unitary perturbation of the adjoint of the left free shift restricted to the right free deBranges-Rovnyak space $\scr{H} ^R (B)$. This is a direct generalization of Clark's classical result \cite{Clark1972} (Theorem \ref{uniperturb}), and we recover Clark's result in the single-variable, scalar-valued case. Given any unitary $U \in \L (\H)$, it is not difficult to check that $\scr{H} ^R (B U^* ) = \scr{H} ^R (B)$. Applying the above result to $BU^*$ for any such unitary $U$, yields the full $\mc{U} (\H)$-parameter family of co-isometric Clark-type perturbations of the restriction of the adjoint of the left free shift.
\end{remark}
\begin{proof}
    Let $B := B^R$. Calculate on formal kernel maps:
\ba (L^* \otimes I_\H) \hat{k} _W ^R & = & (L^* \otimes I_\H) \hat{k} _W - (L ^* \otimes I_\H)  M^R _B \hat{k} _W \bullet_R B(W) ^* \nn \\
& = &   \hat{k} _W W^* - (L^* \otimes I_\H)  M^R _B \hat{k} _W \bullet_R B(W) ^*. \nn \ea
Observe that in terms of the formal power series, each $L _j \otimes I _\H$ is a left multiplier so that $L ^* _j \otimes I _\H \hat{k} _W = \hat{k} _W W_j ^*$, and then calculate,
\ba (L ^* \otimes I_\H)  M^R _B \hat{k} _W \bullet_R B(W) ^*  & = & (L^* \otimes I_\H)  M^R _B (\hat{k} _W- I) \bullet_R B(W) ^* + (L^* \otimes I_\H) B  B(W) ^*  \nn \\
&  = &  M ^R _B \hat{k} _W W^* \bullet_R B(W) ^* + \mbf{B} B(W) ^*. \nn \ea
In summary this shows
\be (L^* \otimes I_\H)  \hat{k} ^R _W = \hat{k} ^R _W  W^* - \mbf{B} B(W) ^*, \label{ClarkA} \ee as expected, since $L^*  \otimes I_\H  | _{\scr{H} ^R (B) } $ is the unique contractive Gleason solution for $\scr{H} ^R (B)$.

Compare this to
\ba & & \rft \pi _B (L) ^* (\rft) ^* \hat{k} ^B _W \nn \\
& = & \rft \pi _B (L) ^* \left( \sum _{\alpha \neq \emptyset} (W^* ) ^{\alpha ^T} \pi _B (L) ^\alpha [I \otimes ] _B \right) \bullet_R (I - B(W) ) ^* + \rft \pi _B (L) ^* [I \otimes ] _B  (I -B(W) ^*) \nn \\
& =  & \rft \pi _B (L) ^* \left( \sum _{\alpha \neq \emptyset} (W^* ) ^{\alpha ^T} \pi _B (L) ^\alpha [I \otimes ] _B \right) \bullet _R (I - B(W) ) ^*
+ \mbf{B} (I - B_\emptyset ) ^{-1} (I -B(W)) ^*, \nn \ea where we have applied the previous proposition identifying $\mbf{B} = \mbf{B} ^R$ with $\mbf{A}$ to obtain the last line above.
It remains to calculate
\ba \rft \pi_B (L) ^* \sum _{\alpha \neq \emptyset} (W^*) ^{\alpha ^T} \pi _B (L ) ^\alpha [I\otimes ] _B & = &
\rft \bigoplus _j \sum _\beta (W^*) ^{\beta ^T} W^* _j \pi _B (L) ^\beta [I\otimes ] _B \nn \\
& = & ( M ^R _{(I -B) ^{-1} }) ^* \hat{\mc{C}} _R \left( * \circ T \circ [I\otimes ]_B ^* (I -W \pi _B (L) ^* ) ^{-1} \right) W^* \nn \\
& = &( M ^R _{(I -B) ^{-1} }) ^* \hat{K} ^R _W W^* \nn \\
& =& \hat{k} _W ^R W^* \bullet_R (I - B(W) ^* ) ^{-1}. \nn \ea
In summary,
\be \rft \pi _B (L) ^* (\rft) ^* \hat{k} ^B _W = \hat{k} ^R _W W^* + \mbf{B} (I -B_\emptyset) ^{-1} (I - B(W) ^* ). \label{ClarkB} \ee

Subtracting the expressions (\ref{ClarkA}) and (\ref{ClarkB}) yields:
\ba -(L^* \otimes I_\H)  \hat{k} _W ^R + \rft \pi _B (L) ^* (\rft) ^* \hat{k} _W ^R & =& \mbf{B}  B(W) ^* + \mbf{B} (I - B_\emptyset ) ^{-1}  (I - B(W) ) ^*. \nn \ea
If we define
$$ T := \mbf{B} (I-B_\emptyset) ^{-1} (\hat{k} ^R _\emptyset ) ^*  : \scr{H} ^R (B) \rightarrow \scr{H} ^R (B) \otimes \C ^d, $$ then on point evaluation maps,
\ba  T \hat{k} _W ^R &= & \mbf{B} (I - B_\emptyset )  ^{-1} (\hat{k} ^R _\emptyset ) ^*  \hat{k} _W ^R \nn \\
& = & \mbf{B}   (I-B_\emptyset ) ^{-1} \hat{k} ^R (\emptyset ; W ) \nn \\
& = & \mbf{B}  (I -B_\emptyset ) ^{-1} (I - B_\emptyset B(W) ^* ), \nn \ea  and then
\ba & & (T + (L^* \otimes I_\H) - \rft \pi _\phi (L) ^* (\rft) ^* ) \hat{k} _W ^R \nn \\
 &= &   \mbf{B}  ( (I -B_\emptyset ) ^{-1} (I - B_\emptyset B(W)) ^* - B(W) ^* - (I - B_\emptyset ) ^{-1} (I -B(W) ^* ). \nn \ea
The expression on the right evaluates to
\ba & &  (I-B_\emptyset ) ^{-1} ( I - B_\emptyset B(W)^* - (I-B_\emptyset ) B(W)^* - I + B(W)^* )  \nn \\
& = &  (I - B_\emptyset ) ^{-1} \left( I - B_\emptyset B(W)^* - B(W) ^* + B_\emptyset B(W) ^* - I + B(W)^* \right)  \nn \\
& = & 0, \nn \ea and this proves the Clark intertwining formulas.
\end{proof}

\section{Relationship between the free and commutative theories}

Recall the theory of non-commutative Aleksandrov-Clark measures for the commutative several-variable operator-valued Schur class $\scr{S} _d (\H)$ \cite{Jur2014AC,JM}. Let $\mc{S} = \mc{S} _d \subset \A _d = \A$ be the (norm-closed) symmetrized operator subspace:
$$ \mc{S} := \bigvee _{\n \in \N ^d} L^\n  = \bigvee _{z\in \B^d} (I -Lz^*) ^{-1}, $$ where $\bigvee$ denotes norm-closed linear span. Also recall that
$$ L ^\n := \sum _{\la (\alpha ) = \n } L^\alpha, $$ where $\la : (\F ^d , \cdot ) \rightarrow (\N ^d , + )$ is the unital letter-counting epimorphism.
As in the free theory of this paper, and as described in the introduction, there is a bijection between non-unital $b \in \scr{S} _d (\H)$, Herglotz-Schur class functions on $\B ^d$, and completely positive (AC) maps $\mu _b \in CP (\mc{S} ; \H )$, where $CP (\mc{S} ; \H )$ is the positive cone of completely positive maps of $\mc{S} + \mc{S} ^*$ into $\L (\H)$. In particular the Herglotz representation formula in this setting is
$$ H_b (z) = \mu _b \left( (I -Lz^*) ^{-1} (I + Lz^* ) \right) +i \im{H_b (0)}, $$ which is formally very similar to our free Herglotz representation formulas of Theorem \ref{FreeHform}.

The operator space $\mc{S}$, like the full free disk algebra $\mc{A}$, has the semi-Dirichlet property:
$$ \mc{S} ^* \mc{S} \subset (\mc{S} + \mc{S} ^* ) ^{- \| \cdot \| }, $$ so that one can again apply a GNS-type construction to obtain the \emph{Hardy space} of $\mu _b$, $H^2 (\mu _b)$, as the completion of the quotient of the algebraic tensor product $\mc{S} \otimes \H$ by vectors of zero length with respect to the pre-inner product: $$ \ip{ s_1 \otimes h _1 }{s_2 \otimes h_2 } _b := \ip{h_1}{\mu _b (s_1 ^* s_2) h_2} _\H. $$ If $\phi = \mu _B \in CP (\A ; \H )$ is a completely positive extension of $\mu _b$, that the Hardy space $H^2 (\mu _b )$ of $\mu _b$ embeds isometrically as a subspace $H^2 (\mu _B ) \simeq H^2 (\mu _b )$ of the free Hardy space $F^2 (\mu _B)$ of $\mu _B$.

\begin{cor} \label{freelifts}
      A free Schur class transpose-conjugate pair $B= (B^L , B^R) \in \scr{L} _d (\H ) \times \scr{R} _d (\H)$ is a pair of free lifts of $b \in \scr{S} _d (\H )$ if and only if $\mu _B \in CP (\mc{A} ; \H )$ is a completely positive extension of $\mu _b \in CP (\mc{S} ; \H )$
to the full free disk operator system $\mc{A} + \mc{A} ^*$.
\end{cor}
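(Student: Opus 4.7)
The plan is to reduce the corollary to a comparison of Herglotz representation formulas, using the bijection of Theorem \ref{FreeHform} for the free side and its commutative counterpart from \cite{Jur2014AC, JM}. The essential point is that specialising the free Herglotz formula to commutative $z \in \B^d = \Om_1$ produces an expression supported in the symmetrized operator subsystem $\mc{S} + \mc{S}^* \subset \mc{A} + \mc{A}^*$, and this specialised expression coincides with the commutative Herglotz formula for $H_b$ (up to the usual passage between $\mc{S}$ and $\mc{S}^*$ via self-adjointness of CP maps).

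First I would carry out the specialisation. Starting from the free Cauchy kernel
$$(I - ZL^*)^{-1} = \sum_{\alpha \in \F^d} Z^\alpha (L^{\alpha^T})^*,$$
and substituting commutative $Z = z \in \B^d$, note that $z^\alpha = z^{\la(\alpha)}$ depends only on the letter count, and transposition is a bijection on words of fixed letter count. Regrouping using the definition $L^\n := \sum_{\la(\beta) = \n} L^\beta$ gives
$$(I - zL^*)^{-1} = \sum_{\n \in \N^d} z^\n (L^\n)^* \in \mc{S}^*,$$
whence $(I - zL^*)^{-1}(I + zL^*) = 2(I - zL^*)^{-1} - I \in \mc{S}^* + \C I \subset \mc{S} + \mc{S}^*$. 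The free Herglotz formula of Theorem \ref{FreeHform} therefore becomes
$$H^L_B(z) = \mu_B|_{\mc{S} + \mc{S}^*}(2(I - zL^*)^{-1} - I) + i\,\im{H_\emptyset}, \qquad z \in \B^d,$$
which is structurally identical to the commutative Herglotz representation of $H_b$ from \cite{Jur2014AC, JM} (with $\mu_b$ in place of $\mu_B|_{\mc{S}+\mc{S}^*}$), once one uses $\mu(X^*) = \mu(X)^*$ to pass between the natural $\mc{S}^*$-form of the free kernel and the $\mc{S}$-form of the commutative kernel.

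Next I would interpret the free-lift condition at the level of evaluation. Writing the free Fourier series $B^L(Z) = \sum_\alpha Z^\alpha B_\alpha$ and substituting commutative $z$ gives $B^L(z) = \sum_\n z^\n \sum_{\la(\alpha) = \n} B_\alpha$, which matches $b(z) = \sum_\n z^\n b_\n$ exactly when $b_\n = \sum_{\la(\alpha) = \n} B_\alpha$ for every $\n \in \N^d$; this is the defining condition for $B^L$ to map to $b$ under the Davidson-Pitts symmetrization. The transpose relation $B^R = T \circ B^L$ acts trivially at commutative points, so $B^R$ is simultaneously a right free lift of $b$. Hence $B$ is a free lift of $b$ iff $B^L|_{\B^d} = b$, iff $H^L_B(z) = H_b(z)$ on $\B^d$ (with imaginary constants automatically matching since $B_\emptyset = b(0)$). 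Combining with the first step and the commutative Herglotz-CP bijection from \cite{Jur2014AC, JM} converts the latter into the equality $\mu_B|_{\mc{S}+\mc{S}^*} = \mu_b$, yielding both implications.

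The main technical point is the conjugation bookkeeping needed to align the specialised free kernel $(I - zL^*)^{-1} \in \mc{S}^*$ with the commutative Herglotz kernel in $\mc{S}$; self-adjointness of CP maps on a self-adjoint operator system resolves this, after which the argument is essentially an assembly of the two Herglotz-CP bijections.
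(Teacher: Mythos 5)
Your proposal is correct and takes essentially the same approach as the paper: both directions reduce the free-lift condition to equality of the Herglotz functions at commutative arguments $z\in\B^d$ and then invoke the Herglotz--CP bijections on the free and abelian sides, with your explicit regrouping $(I-zL^*)^{-1}=\sum_{\n}z^{\n}(L^{\n})^*$ being the same observation the paper expresses by citing the Davidson--Pitts symmetrization map. (A minor point: the ``conjugation bookkeeping'' you flag at the end is not actually needed, since the commutative Herglotz formula is already built on the kernel $(I-zL^*)^{-1}\in\mc{S}^*$, the same ordering that drops out of the free Cauchy kernel.)
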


\begin{proof}
If $\mu _B$ extends $\mu _b$, then observe that $H_b (z)$ is obtained from $H_B ^L (Z)$ or $H_B ^R (Z)$ by substituting the commutative variable $z \in \B ^d$ in for $Z$. Hence $b(z)$ is obtained from $B ^L (Z), B^R (Z)$ in the same way.
This substitution amounts to applying the Davidson-Pitts symmetrization map which is known to be a completely contractive unital epimorphism of $L^\infty _d \otimes \L (\H )$ or $R^\infty _d \otimes \L (\H)$ onto $H^\infty _d \otimes \L (\H )$ \cite[Section 2]{DP-NP}.

Conversely, if $B ^L$ or $B^R$ is a free lift of $b$, then $H_B ^L (Z)$, or $H_B ^R (Z)$, evaluated at commutative $z$ must equal $H_b (z)$. By the Herglotz representation formulas for the free and commutative
Herglotz-Schur classes, it follows that
$$\mu _B ( (I -Lz^* ) ^{-1} ) = \mu _b ( (I -Lz^* ) ^{-1} ), $$ and this proves that $\mu _B | _{\mc{S} + \mc{S} ^* } = \mu _b$.
\end{proof}

Recall that any Schur class $b \in \scr{S} _d (\H)$, or $\mu _b \in CP (\mc{S} ; \H )$ are said to be \emph{quasi-extreme} if $H^2 (\mu _b) = H^2 _0 (\mu _b)$ where $H^2 _0 (\mu _b) \subset H^2 ( \mu _b)$, is the several-variable analogue of the closed linear span of the non-constant analytic monomials (see Remark \ref{qeremark}). This quasi-extreme property is a natural analogue of the single-variable Szeg\"{o} approximation property as described in the introduction, and it is related to extreme points of the Schur class \cite{JMqe}. See \cite{JM} for several equivalent characterizations of this property. The free theory of this paper provides yet another equivalent characterization.

\begin{cor}
    If a Schur class $b \in \scr{S} _d (\H )$ is quasi-extreme then it has a unique pair of transpose-conjugate free lifts $B=(B^L , B^R)  \in \scr{L} _d (\H ) \times \scr{R} _d (\H )$. The converse holds if $\H$ is finite dimensional.
\end{cor}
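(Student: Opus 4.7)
The plan is to use Corollary~\ref{freelifts} to translate the statement into a uniqueness question about completely positive extensions, and then invoke the Gleason-solution characterization of quasi-extreme together with the transfer function machinery of Section~\ref{TFsection}. By Corollary~\ref{freelifts}, transpose-conjugate pairs of free lifts $B \in \scr{L}_d(\H) \times \scr{R}_d(\H)$ of $b$ are in bijection with CP extensions $\mu_B \in CP(\mc{A};\H)$ of $\mu_b \in CP(\mc{S};\H)$. Hence the corollary is equivalent to: $b$ is quasi-extreme if and only if $\mu_b$ admits a unique CP extension to $\mc{A} + \mc{A}^*$, with the converse requiring $\dim{\H} < \infty$.

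For the forward direction, let $\mu_B$ be any CP extension of $\mu_b$ with corresponding transpose-conjugate free lift $B = (B^L, B^R)$. By Proposition~\ref{freeGSform}, $B^R$ possesses a unique contractive free Gleason solution $\mbf{B}^R = (L^* \otimes I_\H) B^R$. Applying the Davidson-Pitts symmetrization (substituting commutative $z \in \B^d$ for the free variable $Z$) sends $\mbf{B}^R$ to a contractive commutative Gleason solution $\mbf{b}$ for $b$, characterized by $z\mbf{b}(z) = b(z) - b(0)$. The assumption that $b$ is quasi-extreme, together with the Gleason-solution characterization of quasi-extreme in~\cite{JM,JMqe}, implies that $\mbf{b}$ is unique. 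Via the transfer function / canonical deBranges-Rovnyak colligation correspondence developed in Section~\ref{TFsection}, uniqueness of $\mbf{b}$ propagates back to uniqueness of $B$, and therefore of $\mu_B$.

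The converse, assuming $\dim{\H} < \infty$, proceeds symmetrically: if $b$ fails to be quasi-extreme, then by~\cite{JMqe} $b$ admits at least two distinct contractive Gleason solutions $\mbf{b} \neq \mbf{b}'$, and the transfer function construction of Section~\ref{TFsection} produces distinct free lifts $B \neq B'$, equivalently distinct CP extensions $\mu_B \neq \mu_{B'}$ of $\mu_b$. The main obstacle lies in both directions in establishing the transfer-function correspondence, which is the technical content of Section~\ref{TFsection}; concretely, one must verify that the passage from contractive Gleason solutions of $b$ to canonical deBranges-Rovnyak colligations (and hence to free lifts of $b$) is bijective, with distinct Gleason solutions producing genuinely distinct lifts. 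A secondary ingredient is the Gleason-solution characterization of quasi-extreme from~\cite{JMqe}, which is where the finite-dimensionality hypothesis enters in the converse.
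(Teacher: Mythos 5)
Your reduction via Corollary~\ref{freelifts} to uniqueness of completely positive extensions matches the paper exactly; the paper then treats the corollary as an immediate consequence of this reduction together with the cited result of \cite[Proposition 4.17]{JM}, which establishes the equivalence ``$b$ quasi-extreme $\Leftrightarrow$ $\mu_b$ has a unique CP extension to $\A + \A^*$'' (for $\dim\H < \infty$). You instead attempt to reprove the latter equivalence from scratch using the Gleason-solution and colligation machinery of Section~\ref{TFsection}, which is a genuinely different route.

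There is a gap in your forward direction. The step ``uniqueness of $\mbf{b}$ propagates back to uniqueness of $B$'' tacitly assumes that the passage from free lifts of $b$ (equivalently, their canonical colligations $U_{dBR}$) to contractive Gleason solutions $\mbf{b}$ for $b$ (equivalently, canonical colligations $u_{dBR}$ for $b$) is injective. It is not. Theorem~\ref{dBRtransfer} states precisely that the map $\mr{Ad}_{C_{H^2}}$ from canonical colligations of free lifts onto canonical colligations for $b$ is \emph{surjective}, and is a \emph{bijection only when restricted} to colligation pairs of the form $(U_{dBR}^L[D], U_{dBR}^R[D])$, \emph{i.e.}, those coming from symmetric extensions $\phi_D$. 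In general, the full preimage of a fixed $u_{dBR}[D]$ consists of all free lifts $B$ for which the compression of $\pi_B(L)$ to $H^2(\mu_B) \simeq H^2(\mu_b)$ equals $\mc{C}_b^* D \mc{C}_b$, and nothing in your argument rules out this preimage containing several elements. Thus uniqueness of the commutative Gleason solution (equivalently, uniqueness of $D \supseteq V^b$, which is what quasi-extremality buys you via Remark~\ref{QEcolligation}) does not by itself force uniqueness of the free lift. To close the gap you would need to show that, when $b$ is quasi-extreme, \emph{every} CP extension of $\mu_b$ is of the symmetric form $\phi_D$ — and that is precisely the nontrivial content of the cited \cite[Proposition 4.17]{JM} which your argument was meant to circumvent. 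Your converse direction is essentially sound (distinct Gleason solutions produce distinct $D$'s, which by the bijection in Theorem~\ref{dBRtransfer} produce distinct symmetric extensions and hence distinct free lifts), but you should be explicit that the finite-dimensionality hypothesis is what licenses the invocation of the Gleason-solution characterization of quasi-extremality from \cite{JM}, not the colligation machinery itself.
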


\begin{remark} \label{NTkernprob}
The converse holds provided that $b$ is quasi-extreme if and only if $\mu _b$ has a unique CP extension $\phi \in CP (\A ; \H )$. In \cite[Proposition 4.17]{JM} this was proven for all finite dimensional $\H$ (and for a large class of $b \in \scr{S} _d (\H)$ with $\H$ separable \cite[Proposition 4.14]{JM}). We expect $b$ is quasi-extreme if and only if $\mu _b$ has a unique extension, but the general result for separable $\H$ remains elusive at this time, see \cite[Remark 2.1]{JM}.
\end{remark}

\subsection{The Free and commutative deBranges-Rovnyak spaces}

As before, $B = (B^L , B^R)$ is a transpose-conjugate pair of free Schur class functions $B^L \in \scr{L } _d (\H )$, $B^R \in \scr{R} _d (\H )$.

\begin{lemma}
The map $C ^L _{H^2} : \scr{H} ^L (B) \rightarrow \scr{H} (b)$ defined by
$$C ^L _{H^2} (I _{F^2} - M ^L_B (M_B ^L) ^* ) \mbf{h} = (I _{H^2} - M_b M_b ^* ) \mbf{h}; \quad \quad \mbf{h} \in \H^2 _d \otimes \H, $$ is a co-isometry onto $\scr{H} (b)$
with initial space $$ [  (I _{F^2} - M_B ^{L} (M_B ^{L} ) ^* ) (H^2 _d \otimes \H ) ] ^{-\| \cdot \| _{\scr{H} ^L (B) } }. $$
\end{lemma}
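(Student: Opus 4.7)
The plan is to use the realization of both deBranges–Rovnyak spaces as complementary range spaces, as described in Example \ref{FreedBReg} (applied in the left case by the obvious symmetry). Writing $D_B := \sqrt{I_{F^2\otimes\H} - M^L_B (M^L_B)^*}$ and $D_b := \sqrt{I_{H^2\otimes\H} - M_b M_b^*}$, we have $\scr{H}^L(B) = \ran(D_B)$ with inner product making $D_B$ a co-isometry onto its range, and analogously for $\scr{H}(b)$. The whole proof then boils down to comparing inner products of vectors of the form $D_B^2\mbf{h}$ and $D_b^2\mbf{h}$ and invoking the free-lift identity $(M^L_B)^*|_{H^2_d\otimes\H} = M_b^*$ (which holds because $B^L$ is a free lift of $b$, and in particular $H^2_d\otimes\H\subset F^2_d\otimes\H$ is isometrically embedded).

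First I would show the map is isometric on the algebraic range $(I_{F^2} - M^L_B(M^L_B)^*)(H^2_d\otimes\H)$. For $\mbf{h},\mbf{g}\in H^2_d\otimes\H$, using that $D_B\mbf{h}\in\ran(D_B)\subset (\ker D_B)^\perp$ so the projection $P$ in the complementary-space inner product acts as the identity on it,
\begin{align*}
\ip{D_B^2\mbf{h}}{D_B^2\mbf{g}}_{\scr{H}^L(B)}
 &= \ip{D_B\mbf{h}}{D_B\mbf{g}}_{F^2\otimes\H}
  = \ip{\mbf{h}}{\mbf{g}}_{F^2\otimes\H} - \ip{(M^L_B)^*\mbf{h}}{(M^L_B)^*\mbf{g}}_{F^2\otimes\H}.
\end{align*}
Since $\mbf{h},\mbf{g}\in H^2_d\otimes\H$, the free-lift identity replaces $(M^L_B)^*$ by $M_b^*$, so the right-hand side equals $\ip{\mbf{h}}{\mbf{g}}_{H^2\otimes\H} - \ip{M_b^*\mbf{h}}{M_b^*\mbf{g}}_{H^2\otimes\H}$. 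Exactly the same manipulation in the commutative case yields $\ip{D_b^2\mbf{h}}{D_b^2\mbf{g}}_{\scr{H}(b)}$ equal to the same expression. This proves that the defining formula for $C^L_{H^2}$ preserves inner products on the algebraic range, so it is well-defined and isometric there.

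Next I would extend $C^L_{H^2}$ by continuity to the $\scr{H}^L(B)$-closure of that range, giving the stated initial space, and by $0$ on its orthogonal complement in $\scr{H}^L(B)$. The result is a partial isometry into $\scr{H}(b)$. To finish, I would verify that the image is dense in $\scr{H}(b)$ and hence, since the extension of an isometry has closed range, equal to $\scr{H}(b)$. Density is a standard fact about complementary range spaces: $\ran(D_b)$ is dense in $(\ker D_b)^\perp$ in $H^2\otimes\H$-norm by self-adjointness of $D_b$, and $D_b$ restricted to $(\ker D_b)^\perp$ is an isometry onto $\scr{H}(b)$ in the $\scr{H}(b)$-norm, so $D_b^2(H^2\otimes\H) = D_b(\ran(D_b))$ is dense in $\scr{H}(b)$. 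A partial isometry onto the full target is a co-isometry, which is the claim.

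The only subtle point is book-keeping between the ambient $F^2_d\otimes\H$ and $H^2_d\otimes\H$ inner products; once one observes that $H^2_d\subset F^2_d$ is an isometric inclusion and that the free-lift identity $(M^L_B)^*|_{H^2_d\otimes\H} = M_b^*$ keeps us inside $H^2_d\otimes\H$, the norm computations on the two sides reduce to the \emph{same} expression $\|\mbf{h}\|^2_{H^2} - \|M_b^*\mbf{h}\|^2_{H^2}$, and the rest is formal. No step here requires quasi-extremality of $b$ or any finer structure of the free lift.
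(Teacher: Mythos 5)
Your proof is correct and takes essentially the same route as the paper: realize both deBranges--Rovnyak spaces as complementary range spaces, push the norm/inner-product computation through the free-lift identity $(M^L_B)^*|_{H^2_d\otimes\H} = M_b^*$, and use co-invariance of $H^2_d\otimes\H$ to convert $\langle (I-M^L_B(M^L_B)^*)\mbf{h},\mbf{g}\rangle_{F^2}$ into $\langle(I-M_bM_b^*)\mbf{h},\mbf{g}\rangle_{H^2}$. The paper records only the norm equality for $\mbf{h}\in H^2_d\otimes\H$ and leaves the polarization, the extension by $0$ off the stated initial space, and the density of $D_b^2(H^2_d\otimes\H)$ in $\scr{H}(b)$ implicit; you have spelled those out, which is a reasonable thing to do but does not change the argument.
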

An analogous co-isometry $C ^R _{H^2}$ is defined for the right free deBranges-Rovnyak space.
\begin{proof}
    Assume that $B=B^L$ and drop the superscript $L$, the same proof works for the right case. The proof follows from the definition of the deBranges-Rovnyak spaces as complementary range spaces: If $\mbf{h} \in H^2 _d \otimes \H$ then

\ba \| (I _{F^2} - M_B ^L (M_B ^L ) ^* ) \mbf{h} \| ^2 _{\scr{H} ^L (B) } & = & \| \sqrt{I _{F^2} - M_B ^L ( M_B ^L ) ^*} \mbf{h} \| ^2 _{ F ^2 } \nn \\
& = & \ip{(I _{F^2} - M_B ^L (M_B ^L ) ^*) \mbf{h} }{\mbf{h}} _{F^2} \nn \\
& = & \ip{ P_{H^2} (I _{F^2} - M_B ^L (M_B ^L) ^*) P _{H^2} \mbf{h} }{\mbf{h} } _{F^2} \nn \\
& = & \ip{(I - M_b M_b ^* ) \mbf{h}}{ \mbf{h} } _{H^2} \nn \\
& =& \| (I - M_b M_b ^*) \mbf{h} \| ^2 _{\scr{H} (b)}. \nn \ea
In the above we used that $H^2 _d \otimes \H$ is co-invariant for the left free multiplier $M_{B} ^L$ and that $(M_B ^L) ^* | _{H^2 _d \otimes \H} = M_b ^*$ since
$B= B^L$ is a left free lift of $b$.
\end{proof}

Recall that in the commutative theory, one defines Cauchy and weighted Cauchy transforms $\mc{C} _b : H^2 ( \mu _b) \rightarrow \scr{H} ^+ (H_b)$ and $\mc{F} _b : H^2 (\mu _b) \rightarrow \scr{H} (b)$ by
$$ \mc{C} _b ( (I -Lz^*) ^{-1} \otimes h + N_b ) = K_z ^b h, $$ and
$$ \mc{F} _b ( (I -Lz^*) ^{-1} \otimes h + N_b ) = k_z ^b (I-b(z) ^* ) ^{-1} h; \quad \quad \mc{F} _b = M _{(I-b)} \mc{C} _b,$$ and these define isometries onto the commutative Herglotz space $\scr{H} ^+ (H_b)$ and the deBranges-Rovnyak space $\scr{H} (b)$, respectively \cite[Section 2.7]{JM}.

\begin{prop} \label{freeabelCT}
Let $B = (B^L , B^R ) \in \scr{L} _d (\H) \times \scr{R} _d (\H)$ be a transpose-conjugate pair of free lifts of $b \in \scr{S} _d (\H)$. Then $\mc{F} _b = C ^R _{H^2} \rft P = C^L _{H^2} \lft P$ where $P$ projects $F^2 ( \mu _B)$ onto $H^2 ( \mu _B) \simeq H^2 (\mu _b)$ and $\lft, \rft$ are the left and right weighted free Cauchy transforms onto the left and right deBranges-Rovnyak spaces of $B$.
\end{prop}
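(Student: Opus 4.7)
The plan is to verify both identities on the dense family
$\tilde v_w := (I-Lw^*)^{-1}\otimes h + N_B$, for $w\in\B^d$ and $h\in\H$.
These vectors lie in $H^2(\mu_B)$ (since $(I-Lw^*)^{-1}\in\mc{S}$), and by Corollary \ref{freelifts} the restriction $\mu_B|_{\mc{S}+\mc{S}^*}=\mu_b$ identifies $H^2(\mu_B)$ isometrically with $H^2(\mu_b)$, sending $\tilde v_w$ to $(I-Lw^*)^{-1}\otimes h + N_b\in H^2(\mu_b)$. Hence $P\tilde v_w=\tilde v_w$, and since all four maps are bounded, the proof reduces to checking
$\mc{F}_b\tilde v_w = C^R_{H^2}\rft\tilde v_w = C^L_{H^2}\lft\tilde v_w$ on this generating set.

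The key step is to recognize $\tilde v_w$ as the specialization at the commutative point $Z=w$ of the Cauchy-kernel element that defines $\rft$. Using $(I-Z\pi_B(L)^*)^{-1}=\sum_\alpha Z^\alpha\pi_B(L^{\alpha^T})^*$, a direct expansion gives
\begin{equation*}
*\circ T\circ[I\otimes]_B^*(I-Z\pi_B(L)^*)^{-1} \;=\; \sum_{\beta\in\F^d}(Z^*)^{\beta^T}\pi_B(L^\beta)[I\otimes]_B,
\end{equation*}
and for commutative $Z=w\in\B^d$ the identity $(w^*)^{\beta^T}=(w^*)^{\la(\beta)}$ collapses the sum to $\sum_\n(w^*)^\n\pi_B(L^\n)[I\otimes]_B$, which applied to $h$ is exactly $\tilde v_w$. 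The row-isometry bound $\|w\pi_B(L)^*\|\leq|w|<1$ yields absolute operator-norm convergence at this specialization, so the formal identity (\ref{wCTformula}) becomes a genuine equality,
\begin{equation*}
\rft\tilde v_w \;=\; \hat k^R_w\,(I-b(w)^*)^{-1}h,
\end{equation*}
where I have used that $B^R$ is a right free lift of $b$, so $B^R(w)=b(w)$ at commutative $w$ under the Davidson--Pitts symmetrization.

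Applying $C^R_{H^2}$ then closes the argument. Under the identification $\mr{Sym}(F^2_d)\simeq H^2_d$ the specialization $\hat k_w$ of the free Szeg\H o kernel at commutative $w$ is the Drury--Arveson kernel $k_w\in H^2_d\otimes\H$, so $\hat k^R_w=(I-M^R_B(M^R_B)^*)\hat k_w$ lies in the initial subspace of the co-isometry $C^R_{H^2}$. Since $(M^R_B)^*|_{H^2_d\otimes\H}=M_b^*$, the defining relation of $C^R_{H^2}$ gives $C^R_{H^2}\hat k^R_w = (I-M_bM_b^*)k_w = k_w^b$, and therefore $C^R_{H^2}\rft\tilde v_w = k_w^b(I-b(w)^*)^{-1}h = \mc{F}_b\tilde v_w$. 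The equality $\mc{F}_b=C^L_{H^2}\lft P$ is obtained by the mirror-image argument (with $T$ acting trivially on commutative specializations and $B^L$ a left free lift of $b$). The one delicate point is the passage from the formal-power-series identity (\ref{wCTformula}) to its commutative specialization at $Z=w$, but this is routine in view of the norm estimate on $w\pi_B(L)^*$ noted above.
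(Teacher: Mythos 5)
Your proof is correct and follows essentially the same route as the paper: evaluate both sides on the commutative Cauchy-kernel vectors $(I-Lw^*)^{-1}\otimes h + N_B \in H^2(\mu_B)$, specialize the formal identity (\ref{wCTformula}) at the commutative point $Z=w$ (using $\hat k_w = k_w$ under the identification of $H^2_d$ with symmetric Fock space) to obtain $\rft\tilde v_w = (I-M^R_B(M^R_B)^*)k_w(I-b(w)^*)^{-1}h$, and then invoke the defining co-isometry relation of $C^R_{H^2}$ to land on $k^b_w(I-b(w)^*)^{-1}h = \mc{F}_b\tilde v_w$. Your write-up is somewhat more explicit than the paper's about the operator-norm convergence justifying the specialization and about the reduction to a dense generating family, but these are details the paper leaves implicit rather than genuine differences of method.
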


\begin{proof}
We prove the right case, left is analogous.
For $z \in \B ^d$, we know that
$$\mc{F} _b (I - \pi _B (L) z^* ) ^{-1} [I\otimes ] _B = k_z ^b (I -b(z) ^* ) ^{-1} . $$
Compare the above to $$ \hat{\mc{F}} _R \left( * \circ T \circ (I -Z\pi _B (L) ^* )^ {-1} [I\otimes ] _B \right) = \hat{k} ^R _Z \bullet_R (I - B(Z) ^* ) ^{-1}. $$
In particular, applying $\hat{\mc{F} } _R$ to $(I - \pi _B (L) z^* ) ^{-1} [I\otimes ] _B$ amounts to substituting the commutative variables $z$ in for $Z$ in the above expression, where
$$\hat{k} ^R _z := (I - M_B ^R (M_B ^R )^*  ) \hat{k} _z (I-b(z) ^* )^{-1}, $$ and $$ \hat{k} _z := \sum _{\alpha} z^\alpha \hat{k} _\alpha = \sum _{\n \in \N ^d } z^\n \hat{k} _\n. $$ In the above, recall that
$\hat{k} _\alpha (Z) = Z^\alpha$, and we define $\hat{k} _\n := \sum _{\alpha ; \ \la (\alpha ) = \n } \hat{k} _\alpha.$ In particular, identifying $H^2 _d$ with symmetric Fock space, we have that $\hat{k} _z = k_z \in \L (\H , H^2 _d )$, so that
$$ \hat{\mc{F} } _R  (I - \pi _B (L) z^* ) ^{-1} [I\otimes ] _B = (I - M_B ^R (M_B ^R )^* ) k_z (I -b(z) ^* ) ^{-1} \in \ker{C^R _{H^2} } ^\perp, $$ and
$\mc{F} _b = C_{H^2} ^R \hat{\mc{F} } _R.$
\end{proof}

\begin{remark}
    It is also easy to check that that the range of $\rft | _{H^2 ( \mu _B )} $ is $(I - M_B ^R (M_B ^R) ^* ) (H^2 _d \otimes \H )$, the initial space of the co-isometry $C_{H^2} ^R $.
\end{remark}

\subsection{Transfer function realizations.} \label{TFsection}

As before, let $B = (B^L , B^R) \in \scr{L} _d (\H) \times \scr{R} _d (\H )$ be a transpose-conjugate pair of free $\L (\H) $-valued Schur class functions. Recall that by \cite{Ball2006Fock}, any $B ^L  \in \scr{L} ^\infty _d (\H )$ correpsonds uniquely to a (co-isometric, observable) \emph{canonical deBranges-Rovnyak colligation}:
$$ U ^R _{dBR} := \bbm A ^R _{dBR}  & B ^R _{dBR}  \\ C ^R _{dBR} & D ^R  _{dBR} \ebm : \bbm \scr{H} ^R (B) \\ \H \ebm \rightarrow \bbm \scr{H}
_R (B) \otimes \C ^d \\ \H \ebm, $$ where,
\ba A ^R _{dBR} & := & (L^* \otimes I_\H) | _{\scr{H} ^R (B) }, \quad \quad  B ^R _{dBR}  := L^* B ^R  \nn \\
 C ^R _{dBR} &:=&  (\hat{k} ^{R} _\emptyset) ^*, \quad \quad \mbox{and}, \quad \quad D ^R _{dBR} := B^R _\emptyset. \nn \ea 
The left Schur multiplier $B^L$ is then realized as the \emph{transfer function} of $U_{dBR}$ by the Schur complement formula
$$ B^L (Z) = D ^R _{dBR} + C ^R _{dBR} ( I - Z A ^R _{dBR} ) ^{-1} B ^R _{dBR},  $$ see \cite[Theorem 4.3]{Ball2006Fock}. Note that $A ^R _{dBR} = \hat{X} ^*$ is (the adjoint of) the unique contractive Gleason solution for $\scr{H} ^R (B)$ and
$B ^R _{dBR} = \mbf{B} ^R$ is our unique contractive Gleason solution for $B ^R$. This shows the (right) canonical deBranges-Rovnyak colligation for a left Schur class element $B ^L \in \scr{L} _d (\H )$ is expressed
in terms of operators on the right free deBranges-Rovnyak space $\scr{H} ^R (B)$, see \cite[Remark 4.5]{Ball2006Fock}. Similarly there is a canonical left colligation and transfer function realization for $B^R$ using the left free deBranges Rovnyak space $\scr{H} ^L (B )$.

In the commutative theory \cite{Ball2008,Ball2010} for Drury-Arveson space, any $b \in \scr{S} _d (\H)$ again always has canonical (weakly co-isometric, observable) deBranges-Rovnyak transfer function realizations and colligations, but these are generally non-unique. Namely, a contraction, $u _{dBR}$, is called a canonical deBranges-Rovynak colligation for $b$ if it can be written in block form as
$$ u_{dBR}  := \bbm a _{dBR} & b  _{dBR} \\ c _{dBR} & d _{dBR} \ebm : \bbm \scr{H} (b) \\ \H \ebm \rightarrow \bbm \scr{H} (b) \otimes \C ^d \\ \H \ebm,$$ where $d_{dBR}  := b(0)$, $c _{dBR} := (k_0 ^b ) ^*$, $b  _{dBR}$ is a contractive Gleason solution for $b$, and
$X := a_{dBR} ^*$ is a contractive Gleason solution for $\scr{H} (b)$. As proven \cite[Theorem 2.9, Theorem 2.10]{Ball2010}, given any contractive Gleason solution $X$ for $\scr{H} (b)$, there is a contractive Gleason
solution $\mbf{b}$ for $b$ so that the above colligation $u_{dBR}$ is a canonical deBranges-Rovnyak colligation (contractive, weakly co-isometric and observable). As in the free case, $b \in \scr{S} _d (\H )$ can be recovered from any such colligation $u _{dBR}$ with the transfer function formula: $$ b(z) = d _{dBR} + c _{dBR} (I -z \cdot a _{dBR} )^{-1} b _{dBR}.$$

In \cite[Section 4]{JM}, it was shown that there is a bijection between contractive Gleason solutions $\mbf{b} : \H \rightarrow \scr{H} (b) \otimes \C ^d$ for $b \in \scr{S} _d (\H)$ and row-contractive extensions $D \supseteq V^b$ of a certain canonical row partial isometry $V^b$ on the commutative Herglotz space $\scr{H} ^+ (H_b)$. Namely, the map $V^b : \scr{H} ^+ (H_b) \otimes \C ^d \rightarrow \scr{H} ^+ (H_b)$ defined by
$$ V^b z^* K_z ^b = K_z ^b - K_0 ^b; \quad \quad z \in \B ^d $$ defines a partial isometry with initial space $\bigvee _{z \in \B ^d} z^* K_z ^b \H$. If $V^b \subseteq D : \scr{H} ^+ (H_b) \otimes \C ^d \rightarrow \scr{H} ^+ (H_b)$ is any row-contractive extension of $V^b$ on $\scr{H} ^+ (H_b)$ (in the sense that $D (V^b) ^* V^b = V^b$) then the formula
 \be \mbf{b} [D] := U_b ^* D^* K_0 ^b (I -b(0) ), \label{GSform} \ee defines a contractive Gleason solution for $b$, and we let $X [D]$ denote the contractive Gleason solution for $\scr{H} (b)$ corresponding to $\mbf{b} [D]$ as in equation (\ref{GSonto}):
 $$ X[D] ^* k_z ^b = z^* k_z ^b - \mbf{b} [D] b(z) ^*; \quad \quad z \in \B ^d.$$ In the above, $U_b : \scr{H} (b) \rightarrow \scr{H} ^+ (H_b)$ is the onto isometric multiplier of multiplication by $(I -b(z) ) ^{-1}$. (We assume here that $b \in \scr{S} _d (\H )$ is non-unital,
\emph{i.e.}, $I -b(z)$ is invertible for $z \in \B ^d$ and $H_b (z)$ takes values in bounded operators.) Finally, we set
\be \label{ccform} u_{dBR} [D] := \bbm X [D] ^* & \mbf{b} [D] \\ (k_0 ^b) ^* & b(0) \ebm; \quad \quad D \supseteq V^b, \ D : \scr{H} ^+ (H_b) \otimes \C ^d \rightarrow \scr{H} ^+ (H_b). \ee
Theorem \ref{dBRtransfer} below will prove that any $u_{dBR} [D]$ is a canonical deBranges-Rovnyak colligation for $b$, and that the map $D \mapsto u_{dBR} [D]$ is surjective (neither of these facts is immediately obvious).

\begin{defn}
    Given any non-unital $b \in \scr{S} _d (\H )$, let $D \supseteq V^b$ be a row contractive extension of $V^b$ on $\scr{H} ^+ (H_b)$. Define the extension $\phi _D \in CP (\A ; \H )$ of $\mu _b \in CP (\mc{S} ; \H )$ by
$$ \phi _D (L ^\alpha ) := (K_0 ^b ) ^* D ^\alpha K_0 ^b \in \L (\H ). $$ Such an extension will be called a \emph{symmetric extension}.
\end{defn}
 The fact that $\phi _D \in CP (\mc{A}; \H )$ extends
$\mu _b \in CP (\mc{S} ; \H)$ follows from:

\begin{lemma}{ (\cite[Lemma 3.14]{JM})}
    A row contraction $D : \K \otimes \C ^d \rightarrow \K$ on $\K \supseteq \scr{H} ^+ (H_b)$ extends $V^b$, $V^b \subseteq D$, if and only if
$$ K_z ^b = (I -Dz^* ) ^{-1} K_0 ^b; \quad \quad z \in \B ^d. $$
\end{lemma}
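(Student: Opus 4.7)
The plan is to unpack what the containment $V^b \subseteq D$ means operationally and then translate the defining identity for $V^b$ into the claimed resolvent formula. Recall that $V^b$ is a partial isometry on $\scr{H}^+(H_b)$ with initial space $\mathcal{I} := \bigvee_{z \in \B^d} z^* K_z^b \H \subseteq \scr{H}^+(H_b) \otimes \C^d$, characterized by $V^b z^* K_z^b = K_z^b - K_0^b$. A contractive extension $V^b \subseteq D$ means precisely that $D$ agrees with $V^b$ on $\mathcal{I}$ (equivalently, $D(V^b)^* V^b = V^b$), so the entire lemma is a direct translation between the two equivalent ways of recording the action of $D$ on that spanning set.

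For the forward direction $(\Rightarrow)$, assume $V^b \subseteq D$. For any $z \in \B^d$ and $h \in \H$, the vector $z^* K_z^b h$ lies in $\mathcal{I}$, so
\[ D z^* K_z^b h = V^b z^* K_z^b h = (K_z^b - K_0^b) h. \]
Rearranging gives $(I - Dz^*) K_z^b = K_0^b$. Since $D$ is a row contraction and $z \in \B^d$ has $\|z\| < 1$, we have $\|Dz^*\| \leq \|z\| < 1$, so $I - Dz^*$ is invertible on $\K$ with inverse given by the norm-convergent Neumann series. Applying this inverse to both sides yields $K_z^b = (I - Dz^*)^{-1} K_0^b$, as claimed.

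For the reverse direction $(\Leftarrow)$, assume $K_z^b = (I - Dz^*)^{-1} K_0^b$ for all $z \in \B^d$; again the invertibility is guaranteed by $D$ being a row contraction and $\|z\|<1$. Reversing the algebra above produces
\[ D z^* K_z^b h = (K_z^b - K_0^b) h = V^b z^* K_z^b h \]
for every $z \in \B^d$ and $h \in \H$. Since the vectors $z^* K_z^b h$ span the initial space $\mathcal{I}$ of $V^b$ by definition, $D$ and $V^b$ agree on $\mathcal{I}$, which is exactly the statement $V^b \subseteq D$.

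No serious obstacle arises; the only subtle point is confirming the invertibility of $I - Dz^*$ on the possibly larger ambient space $\K$, which is immediate from the row-contraction hypothesis combined with $z \in \B^d$. Once that is noted, the proof reduces to the single algebraic manipulation $Dz^* K_z^b = K_z^b - K_0^b \iff K_z^b = (I - Dz^*)^{-1} K_0^b$, applied on the dense spanning set of $\mathcal{I}$.
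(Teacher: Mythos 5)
Your proof is correct and is essentially the only natural argument: the paper itself does not reproduce a proof (it cites Lemma 3.14 of [JM]), and the equivalence is, as you observe, a direct translation between the identity $D z^* K_z^b = K_z^b - K_0^b$ on the spanning set $\{z^* K_z^b h\}$ of the initial space of $V^b$ and the resolvent formula $K_z^b = (I - Dz^*)^{-1}K_0^b$, using that $\|Dz^*\| \le \|z\| < 1$ makes $I - Dz^*$ boundedly invertible on $\K$ (row contraction composed with the column map $z^*$ of norm $\|z\|$). The one bookkeeping point you handle correctly is that the extension $V^b \subseteq D$, defined as $D(V^b)^*V^b = V^b$, means precisely agreement of $D$ with $V^b$ on the initial space of the partial isometry $V^b$, so linearity and boundedness allow you to pass from the spanning vectors $z^*K_z^b h$ to the full closed span.
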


In the case where $D = V^b$, $\phi _D$ is called the \emph{tight extension} of $\mu _b$. This was defined and studied in \cite{JM,Jur2014AC}. Since
each $\phi _D$ extends $\mu _b$, Corollary \ref{freelifts} implies that $\phi _D = \mu _{B[D]}$ for a unique transpose-conjugate pair $B[D] = (B[D] ^L , B[D] ^R ) \in \scr{L} _d (\H ) \times \scr{R} _d (\H )$.

\begin{lemma}\label{symext}
Let $D \subseteq V^b$ and let $\phi _D$ be the corresponding symmetric extension. Then $\pi _D (L) := \pi _{ \mu _{B(D)} } (L)$ is unitarily equivalent to the minimal isometric dilation of $D$ and
$H^2 (\phi _D) \simeq H^2 (\mu _b)$ is co-invariant for $\pi _D (L)$.
\end{lemma}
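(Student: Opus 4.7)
The plan is to handle the two assertions separately, exploiting the $*$-representation structure of $\pi_D$ and the uniqueness of minimal Stinespring dilations.

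For the identification $H^2(\phi_D)\simeq H^2(\mu_b)$ and the co-invariance of $H^2(\phi_D)$ under $\pi_D(L)$: Since $\phi_D$ extends $\mu_b$ from $\mc{S}+\mc{S}^*$ to $\mc{A}+\mc{A}^*$ by Corollary~\ref{freelifts}, the pre-inner products on $\mc{S}\otimes\H$ induced by $\phi_D$ and $\mu_b$ coincide, so the natural quotient map $\mc{S}\otimes\H/N_D \to \mc{S}\otimes\H/N_b$ is well-defined and isometric with dense range, extending to a Hilbert space unitary $H^2(\phi_D)\to H^2(\mu_b)$. For co-invariance, the key algebraic identity is
$$ L_j^* L^{\n} = L^{\n-\mbf{e}_j} \qquad (\text{with } L^{\n-\mbf{e}_j}:=0 \text{ when } n_j=0), $$
obtained by expanding $L^{\n} = \sum_{\la(\alpha)=\n} L^{\alpha}$ and noting that $L_j^* L^{\alpha}$ vanishes unless $\alpha$ begins with $j$, in which case it equals $L^{\alpha'}$ with $\alpha=j\alpha'$. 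Since $L^{\n-\mbf{e}_j}\in\mc{S}\subset\mc{A}$ and $\pi_D$ is a $*$-representation of $\mc{E}=C^*(\mc{A})$, it follows that $\pi_D(L_j)^*\pi_D(L^{\n})[I\otimes]_D h = \pi_D(L^{\n-\mbf{e}_j})[I\otimes]_D h \in H^2(\phi_D)$, and as such vectors span $H^2(\phi_D)$, one concludes $\pi_D(L_j)^* H^2(\phi_D) \subseteq H^2(\phi_D)$ for each $j$.

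For the unitary equivalence of $\pi_D(L)$ with the minimal isometric dilation of $D$: Let $V:\mc{K}_V\otimes\C^d\to\mc{K}_V$ denote the Popescu--Frazho minimal isometric dilation of $D$ on $\mc{K}_V\supseteq\K$, and set $\M:=\bigvee_{\alpha\in\F^d} V^\alpha K_0^b\H\subseteq\mc{K}_V$. Then $\M$ is $V$-invariant, so $V|_\M$ is a row isometry on $\M$ for which $K_0^b\H$ is cyclic by construction. Using the co-invariance of $\K\ni K_0^b$ under $V$, one computes
$$ (K_0^b)^*(V|_\M)^\alpha K_0^b = (K_0^b)^* V^\alpha K_0^b = (K_0^b)^* D^\alpha K_0^b = \phi_D(L^\alpha), $$
so $(V|_\M,\M,K_0^b)$ is a minimal Stinespring-type dilation of $\phi_D$ on the semi-Dirichlet algebra $\mc{A}$. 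Since $(\pi_D(L),F^2(\phi_D),[I\otimes]_D)$ is also a minimal Stinespring dilation of $\phi_D$ by the GNS construction, the uniqueness of minimal Stinespring dilations yields a unitary $W:F^2(\phi_D)\to\M$ with $W[I\otimes]_D=K_0^b$ and $W\pi_D(L_j)=V_j W$ for each $j$, establishing the desired unitary equivalence.

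The main subtlety I anticipate is the precise interpretation of ``minimal isometric dilation of $D$'' when $\K$ strictly exceeds the $D$-cyclic span of $K_0^b\H$: in that case $\M$ may be a proper $V$-invariant subspace of $\mc{K}_V$, and one must verify that $V|_\M$ is the intended object. A clean resolution is to restrict $D$ at the outset to $\bigvee_\alpha D^\alpha K_0^b\H$, on which $D$ still defines a row contraction extending $V^b$ (by Lemma~\ref{symext} and the identity $K_z^b=(I-Dz^*)^{-1}K_0^b$) and produces the same symmetric extension $\phi_D$; with this preparation the two notions of minimal dilation coincide and the argument above applies without ambiguity.
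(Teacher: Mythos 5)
Your proof of the co-invariance of $H^2(\phi_D)$ under $\pi_D(L)^*$ is correct and, in fact, more direct than the paper's: the identity $L_j^* L^{\n} = L^{\n-\mbf{e}_j}$ together with the $*$-extendibility of $\pi_D$ settles it immediately, whereas the paper obtains co-invariance only as a by-product of the unitary equivalence with the dilation $W$ (via the intertwining of $\mc{C}_D$ and the co-invariance of $\scr{H}^+(H_b)$ under $W$). The identification $H^2(\phi_D)\simeq H^2(\mu_b)$ is likewise fine.

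The gap is in the unitary-equivalence half, and it is exactly the ``subtlety'' you flagged — but you misdiagnose it and the proposed repair is vacuous. Your argument produces a unitary from $F^2(\phi_D)$ onto $\M := \bigvee_{\alpha}V^\alpha K_0^b\H$, establishing $\pi_D(L)\cong V|_\M$; what is needed is $\M = \K_V$. You suggest restricting $D$ to $\bigvee_\alpha D^\alpha K_0^b\H$, but since $K_z^b=(I-Dz^*)^{-1}K_0^b$ one has $\bigvee_\n D^\n K_0^b\H = \bigvee_z K_z^b\H = \scr{H}^+(H_b)$ already, so that restriction changes nothing. The real issue is that $D$-cyclicity of $K_0^b\H$ in $\scr{H}^+(H_b)$ does \emph{not} by itself force $V$-cyclicity of $K_0^b\H$ in $\K_V$: one only knows $D^\n K_0^b = P_{\scr{H}^+(H_b)}V^\n K_0^b$, and for a generic contraction with a cyclic vector the isometric dilation need not be cyclic for the same vector. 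The paper closes this by a further observation you do not make: the isometry $\mc{C}_D$ (your $W^{-1}$) extends the commutative Cauchy transform $\mc{C}_b$, which is already known to map $H^2(\mu_b)$ \emph{onto} $\scr{H}^+(H_b)$; hence $\ran{\mc{C}_D}\supseteq\scr{H}^+(H_b)$, and since $\ran{\mc{C}_D}$ is $V$-invariant, minimality of $V$ forces $\ran{\mc{C}_D}=\K_V$. Without some argument of this sort your proof only gives unitary equivalence of $\pi_D(L)$ with a restriction of the minimal isometric dilation to an invariant subspace, not with the minimal isometric dilation itself.
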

    This motivates the terminology \emph{symmetric extension} (the symmetric subspace $H^2 (\phi _D) \subseteq F^2 (\phi _D )$ is co-invariant for $\pi _D (L)$).
The proof is as in \cite[Proposition 3.7, Lemma 3.8]{JM}:
\begin{proof}
Let $\pi _D := \pi _{\phi _D}$ be the GNS representation of $\mc{A}$ on $F^2 (\phi _D)$. Then $T := \pi _D (L)$ is a row isometry and $H^2 (\phi _D) = \bigvee _{\n \in \N^d } T^\n [I \otimes ] _{\phi _D} \H$ is cyclic for $T$. Let $W$ be the minimal isometric dilation of $D$ on $\K _D \supseteq \scr{H} ^+ (H_b)$.
Since $W, L$ are row isometries, for any $\alpha , \beta \in \F ^d$,
$$ (L^\alpha ) ^* L^\beta  = \left\{ \begin{array}{cc} L^\la &  \beta = \alpha \la \\ (L^\la ) ^*  & \alpha = \beta \la \\
0 & \mbox{else,} \end{array} \right., $$ and similarly for $W$. Hence, assuming say that $\beta = \alpha \la$,
\ba \phi _D ((L^\alpha ) ^* L ^\beta ) & = & \phi _D (L ^\la ) \nn \\
& = & (K_0 ^b) ^* W ^\la K_0 ^b \nn \\
& = & (K_0 ^b) ^* (W^\alpha ) ^* W^\beta K_0 ^b. \nn \ea It follows that the map $\mc{C} _D : F^2 (\phi _D ) \rightarrow \K _D$ defined by
$$ \mc{C} _D  T ^\alpha [I \otimes ] _{\phi _D} := W ^\alpha K_0 ^b, $$ is an onto isometry (onto by minimality of $W$) which extends the Cauchy transform
$\mc{C} _b$ of $H^2 (\phi _D)$ onto $\scr{H} ^+ (H_b)$. In particular, $\mc{C} _D T ^\alpha = W^\alpha \mc{C} _D$, and since $\scr{H} ^+ (H_b )$ is co-invariant for $W$,
$W^* | _{\scr{H} ^+ (H_b) } = D^*$, it follows that $H^2 (\phi _D)$ is co-invariant for $T = \pi _D (L)$.
\end{proof}

This also yields the generalized Clark intertwining formulas:

\begin{thm}
Given any row contractive extension $D$ of  $V^b$ on $\scr{H} ^+ (H_b)$, the weighted Cauchy transform intertwines the co-isometry $\pi _D (L) ^*$
with a perturbation of the adjoint of the contractive Gleason solution $X(D)$ for $\scr{H} (b)$:
$$ \mc{F} _b \pi _D (L) ^* | _{H^2 (\mu _b)} = \left( X [D] ^* + \mbf{b} [D] (I - b(0) ) ^{-1} (k_0 ^b ) ^*  \right) \mc{F} _b. $$
\end{thm}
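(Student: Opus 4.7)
The plan is to descend from the Cauchy transform intertwining on the GNS-type level to the weighted (de Branges--Rovnyak) level, and then verify the resulting operator identity on the total set $\{k_w^b h : w \in \B^d,\ h \in \H\}$ of $\scr{H}(b)$.

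Set $B := B[D]$ so that $\phi_D = \mu_B$ and $\pi_D(L) = \pi_B(L)$. By Lemma \ref{symext}, the subspace $H^2(\mu_b) \simeq H^2(\phi_D)$ is coinvariant for $\pi_D(L)$, and the isometric extension $\mc{C}_D$ of the Cauchy transform realizes $\pi_D(L)$ as the minimal isometric dilation $W$ of $D$. Since $\scr{H}^+(H_b)$ is coinvariant for $W$ with $W^*|_{\scr{H}^+(H_b)} = D^*$, this yields the Cauchy intertwining
\[
\mc{C}_b \pi_D(L)^* |_{H^2(\mu_b)} = D^* \mc{C}_b.
\]
Factoring $\mc{F}_b = U_b^* \mc{C}_b$, where $U_b : \scr{H}(b) \to \scr{H}^+(H_b)$ is the onto isometric multiplier of multiplication by $(I-b(z))^{-1}$, the theorem reduces to showing
\[
U_b^* D^* U_b = X[D]^* + \mbf{b}[D](I-b(0))^{-1}(k_0^b)^*
\]
as operators on $\scr{H}(b)$.

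We verify this reduced identity on $k_w^b h$. Two preliminary facts are needed. Standard multiplier duality gives $U_b k_w^b h = K_w^b(I-b(w)^*)h$ and $U_b^* K_w^b h = k_w^b(I-b(w)^*)^{-1}h$. The main technical point of the argument is the identity
\[
D^* K_w^b = D^* K_0^b + w^* K_w^b,
\]
which we derive from $K_w^b = K_0^b + Dw^* K_w^b$ (cf.\ \cite[Lemma 3.14]{JM}) by applying $D^*$ and showing $D^*D\, w^* K_w^b = w^* K_w^b$. This last equality holds because $w^* K_w^b$ lies in the initial space of the partial isometry $V^b$, and on this initial space $D$ coincides with $V^b$; hence $\|Dv\| = \|v\|$ for $v$ in the initial space, which combined with the row-contractivity of $D$ forces $D^*Dv = v$ there.

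Granting these two preliminary facts, a short computation using the definition $\mbf{b}[D] = U_b^* D^* K_0^b(I-b(0))$ from (\ref{GSform}) yields
\[
U_b^* D^* U_b k_w^b h = \mbf{b}[D](I-b(0))^{-1}(I-b(w)^*) h + w^* k_w^b h.
\]
The claimed right-hand side, evaluated on $k_w^b h$ using the Gleason relation $X[D]^* k_w^b = w^* k_w^b - \mbf{b}[D] b(w)^*$ and the kernel value $(k_0^b)^* k_w^b h = (I - b(0)b(w)^*)h$, simplifies to the same expression after the algebraic manipulation $(I-b(0))^{-1}(I-b(0)b(w)^*) - b(w)^* = (I-b(0))^{-1}(I-b(w)^*)$. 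Totality of $\{k_w^b h : w \in \B^d,\ h \in \H\}$ in $\scr{H}(b)$ completes the proof. The main obstacle is the auxiliary identity $D^* K_w^b = D^* K_0^b + w^* K_w^b$, which is precisely where the hypothesis $D \supseteq V^b$ enters in an essential way: without the coincidence of $D$ with $V^b$ on the initial space of $V^b$, one would only obtain $D^*D\, w^* K_w^b \leq w^* K_w^b$, and the clean Clark-type formula would fail.
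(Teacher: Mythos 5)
Your proposal is correct, and it is essentially the route the paper intends (the paper's proof is a one-line citation to \cite[Section 4.15]{JM} noting co-invariance of $H^2(\mu_b)$ for $\pi_D(L)$, which is exactly your starting point via Lemma \ref{symext}). You have filled in the details cleanly: the factorization $\mc{F}_b = U_b^*\mc{C}_b$ reduces everything to the operator identity $U_b^*D^*U_b = X[D]^* + \mbf{b}[D](I-b(0))^{-1}(k_0^b)^*$ on $\scr{H}(b)$, and your verification on kernel vectors via the key identity $D^*K_w^b = D^*K_0^b + w^*K_w^b$ — derived from $K_w^b=(I-Dw^*)^{-1}K_0^b$ together with the observation that $D^*D$ acts as the identity on the initial space of $V^b$ because a row contraction that is isometric on a vector must have $D^*Dv=v$ there — is precisely the commutative analogue of the computation in the paper's proof of the free Clark intertwining theorem, where the row-isometry relation $\pi_B(L)^*\pi_B(L)=I$ plays the corresponding role.
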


\begin{proof}
   The proof is exactly as in \cite[Section 4.15]{JM}, using that $H^2 (\mu _b) \simeq H^2 (\phi _D)$ is co-invariant for $\pi _D (L)$.
\end{proof}

Given any contractive extension $D \supseteq V^b$, and corresponding $\phi _D \in CP (\mc{A} ; \H)$ extending $\mu _b \in CP (\mc{S} ; \H )$ as above,
we write $U_{dBR} ^L [D] ,U_{dBR} ^R [D]$ for the canonical deBranges-Rovnyak colligations for the unique free Schur pair $B[D] = (B[D] ^L, B[D] ^R ) \in \scr{L} _d (\H ) \times \scr{R} _d (\H )$ corresponding to the extension $\phi _D$ by Corollary \ref{freelifts}.

\begin{thm} \label{dBRtransfer}
Given any non-unital $b \in \scr{S} _d (\H)$, let $B = (B^L , B^R) \in \scr{L} _d (\H) \times \scr{R} _d (\H)$ be a transpose-conjugate pair of free lifts of $b$.
Let $$ u_{dBR} = \bbm a_{dBR} & b_{dBR} \\ c_{dBR} & d_{dBR} \ebm := \bbm C_{H^2} \otimes I_d & 0 \\ 0 & I _\H \ebm  \ U_{dBR} \ \bbm C _{H^2} & 0 \\ 0 & I _\H \ebm ^* : \bbm \scr{H} (b) \\ \H \ebm \rightarrow \bbm \scr{H} (b) \otimes \C ^d \\ \H \ebm, $$ where $U _{dBR} = \bbm A_{dBR} & B_{dBR} \\ C_{dBR} & D_{dBR} \ebm $ is either the left canonical deBranges-Rovnyak colligation for $B^R$ or the right colligation for $B^L$. Then $u_{dBR} =: \mr{Ad} _{C_{H^2}} \circ U_{dBR}$ is a canonical deBranges-Rovnyak colligation for $b$ such that $b_{dBR} = C_{H ^2} B_{dBR}$ is a contractive Gleason solution for $b$, and  $a_{dBR}^* = C_{H^2} A_{dBR} ^* C_{H^2} ^*$ is the contractive Gleason solution for $\scr{H} (b)$ corresponding to $b_{dBR}$:
 $$ a_{dBR} k_w ^b = w^* k_w ^b - b_{dBR} b(w) ^*; \quad \quad w \in \B ^d.$$

This defines a surjective map, $\mr{Ad} _{C_{H^2}}$, from canonical deBranges-Rovnyak colligations of free lifts of $b$ onto canonical colligations for $b$. Every canonical colligation for $b$ has the form $u_{dBR} [D]$ for a unique contractive $D \supseteq V^b$ (see equation \ref{ccform}) and the map $\mr{Ad} _{C_{H^2}}$ is a bijection when restricted to canonical colligation pairs of the form $(U_{dBR} ^L [D], U_{dBR} ^R [D])$. A colligation pair $(U _{dBR} ^L , U_{dBR} ^R )$ corresponding to a free Schur class pair $B = (B^L , B^R )$ is in the inverse image of $u_{dBR} [D]$ under $\mr{Ad} _{C_{H^2}}$ if and only if the compression of $\pi _B (L) $ to $H^2 (\mu _B) \simeq H^2 ( \mu _b)$ is equal to
$\mc{C} _b ^* D \mc{C} _b$.
\end{thm}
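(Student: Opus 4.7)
The plan is to verify each assertion in the order it appears, using Proposition \ref{freeabelCT} and the right free Clark intertwining theorem as the two main engines. First, I would check that the two trivial entries of $u_{dBR}$ come out correctly: $d_{dBR} = D_{dBR} = B_\emptyset^R = b(0)$ since $B^R$ is a free lift of $b$, while $c_{dBR} = (\hat{k}^R_\emptyset)^* C_{H^2}^* = (C_{H^2} \hat{k}^R_\emptyset)^*$. To finish this step I would observe that $C_{H^2}$ is intertwined with the identifications of point evaluation maps; specifically, unpacking $\hat{\mc{F}}_R(I-\pi_B(L)z^*)^{-1}[I\otimes]_B = \hat{k}^R_z\bullet_R(I-b(z)^*)^{-1}$ from the proof of Proposition \ref{freeabelCT} and applying $C_{H^2}^R$ yields $\mc{F}_b(\cdot) = k_z^b(I-b(z)^*)^{-1}$, which upon multiplying by $(I-b(z)^*)$ gives $C_{H^2}^R \hat{k}^R_z = k_z^b$ for commutative $z\in\B^d$. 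Taking $z=\emptyset/0$ gives $c_{dBR} = (k_0^b)^*$, and taking adjoints gives $(C_{H^2}^R)^* k_w^b = \hat{k}_w^R$ since $\hat{k}_w^R$ lies in the initial space of the co-isometry.

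Next I would verify that $b_{dBR} = (C_{H^2}^R\otimes I_d)\mbf{B}^R$ and $a_{dBR}^* = C_{H^2}^R A_{dBR}^* (C_{H^2}^R)^*$ are a compatible Gleason pair for $\scr{H}(b)$ and $b$. The calculation is direct: applying the free Gleason relation $A_{dBR}^* \hat{k}_W^R = \hat{k}_W^R W^* - \mbf{B}^R B^R(W)^*$ (from the right free Clark intertwining theorem) to $(C_{H^2}^R)^* k_w^b = \hat{k}^R_w$ and compressing by $C_{H^2}^R \otimes I_d$ on the left yields $a_{dBR}^* k_w^b = w^* k_w^b - b_{dBR} b(w)^*$, which is exactly equation (\ref{GSonto}). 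Contractivity of $b_{dBR}$ follows from the chain of contractive operators; $b_{dBR}^* b_{dBR} \leq (\mbf{B}^R)^*\mbf{B}^R \leq I - (B_\emptyset^R)^* B_\emptyset^R = I - b(0)^* b(0)$. Combined with the already-verified entries $c_{dBR},d_{dBR}$, this shows $u_{dBR}$ is a canonical deBranges-Rovnyak colligation for $b$ in the sense of \cite{Ball2008,Ball2010}.

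Third, for surjectivity and the bijection on colligation pairs, I would exploit the Ball-Bolotnikov parametrization together with Corollary \ref{freelifts}. Every canonical deBranges-Rovnyak colligation for $b$ has the form $u_{dBR}[D]$ for some row-contractive extension $D\supseteq V^b$ on $\scr{H}^+(H_b)$; given such a $D$, the symmetric extension $\phi_D\in CP(\A;\H)$ defined by $\phi_D(L^\alpha) = (K_0^b)^* D^\alpha K_0^b$ corresponds via Corollary \ref{freelifts} to a unique free Schur pair $B[D]=(B^L[D],B^R[D])$ lifting $b$. To conclude $\mr{Ad}_{C_{H^2}}(U_{dBR}^R[D]) = u_{dBR}[D]$, I would compare the Gleason entries: $C_{H^2}^R \mbf{B}^R = \mbf{b}[D]$ amounts to matching compressions of $\pi_{\phi_D}(L)^*$ on $H^2(\mu_b)$ against the row-contractive extension $D^*$, which is precisely the content of Lemma \ref{symext} combined with equation (\ref{GSform}). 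Injectivity follows since different $D$ determine different $\phi_D$ (the GNS construction reconstructs the minimal isometric dilation of $D$, hence $D$, from $\phi_D$), hence different $B[D]$ by Corollary \ref{freelifts}, hence different canonical free colligations. Finally, the fiber characterization falls out of this construction: a pair $(U_{dBR}^L,U_{dBR}^R)$ for $B$ compresses to $u_{dBR}[D]$ iff $\mu_B=\phi_D$, and Lemma \ref{symext} translates this into the statement that the compression of $\pi_B(L)$ to $H^2(\mu_B)\simeq H^2(\mu_b)$ equals $\mc{C}_b^* D \mc{C}_b$. The main obstacle I anticipate is the careful bookkeeping in the third step, particularly verifying that $C_{H^2}^R \mbf{B}^R$ matches the explicit formula (\ref{GSform}) for $\mbf{b}[D]$ under the identifications from Proposition \ref{freeabelCT} and Lemma \ref{symext}; this is where the free and commutative pictures have to be precisely aligned.
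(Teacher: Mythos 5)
Your proposal follows the same overall strategy as the paper -- conjugate $U_{dBR}$ by the co-isometry $C_{H^2}$, verify the four block entries, then establish surjectivity and the bijection via the correspondence $D \leftrightarrow \phi_D \leftrightarrow B[D]$ using Corollary \ref{freelifts} and Lemma \ref{symext} -- so the essential architecture is right. Where you differ is in the verification of the Gleason relations: you derive $a_{dBR}^* k_w^b = w^* k_w^b - b_{dBR} b(w)^*$ by first establishing $(C_{H^2}^R)^* k_w^b = \hat{k}_w^R$ and then simply compressing the free Gleason identity $A_{dBR}^*\hat{k}_W^R = \hat{k}_W^R W^* - \mathbf{B}^R B^R(W)^*$ by $C_{H^2}^R\otimes I_d$. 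The paper instead carries out a direct term-by-term expansion of $C_{H^2}(L^*\otimes I_\H)(I-M_B^R(M_B^R)^*)k_w$; your route is shorter, avoids the bookkeeping, and automatically produces the correct sign (the paper's printed expansion has a sign slip). So this portion of your argument is arguably cleaner than the original.

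Two small points you should make explicit before calling $u_{dBR}$ a canonical colligation. First, the Ball--Bolotnikov definition requires the full block operator $u_{dBR}$ to be a contraction, not merely that $b_{dBR}$ be contractive in the Gleason sense; you verify only the latter. The fix is one line (the paper's): $u_{dBR}$ is the compression of the co-isometry $U_{dBR}$ by the contractions $C_{H^2}\otimes I_d$ and $C_{H^2}$, hence contractive. Second, in the forward direction you should actually identify the $D$ for which $b_{dBR} = \mathbf{b}[D]$ -- that is, verify $D := \mc{C}_b P_{H^2(\mu_b)}\pi_B(L)^*|_{H^2(\mu_b)}$ extends $V^b$; the paper does this by checking the action on $z^*(I-Lz^*)^{-1}\otimes h$. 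You defer this to the third step (``precise alignment of the free and commutative pictures''), but it is already needed to finish the first step. Finally, for the uniqueness of $a_{dBR}$ given $b_{dBR}$ (needed to show the inverse image consists exactly of the pairs $(U_{dBR}^L[D], U_{dBR}^R[D])$), you implicitly rely on the rigidity result from \cite[Corollary 2.9]{Ball2007trans}, which the paper cites explicitly; you should too, since it is not automatic that two canonical colligations sharing the same $\mathbf{b}$-block share the same $X$-block.
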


\begin{remark} \label{QEcolligation}
By \cite[Theorem 4.17]{JM}, $b \in \scr{S} _d (\H )$ is quasi-extreme if and only if $V^b$ is a co-isometry, or equivalently if and only if $b$ has a unique
contractive (and necessarily extremal) Gleason solution $\mbf{b} = \mbf{b} [V^b ]$. Moreover, in this case $X = X [V^b ]$ is the unique contractive Gleason solution for $\scr{H} (b)$ and this solution is extremal.  It follows easily from this that $b \in \scr{S} _d (\H )$ is quasi-extreme if and only if $u_{dBR} = u_{dBR} [V^b]$ is the unique contractive canonical deBranges-Rovnyak colligation for $b$ and this colligation is an isometry.
\end{remark}

\begin{proof}
    Consider the right colligation case, let $B=B^R$ be any right free lift of $b$, we suppress the superscript $R$. Let $U_{dBR}$ be the unique canonical co-isometric deBranges-Rovnyak colligation for $B$. Given $B_{dBR} =B_{dBR} ^R = (L^* \otimes I_\H) B^R$ consider $b_{dBR} := C_{H^2} ^R B_{dBR}$.  This map $b_{dBR} : \H \rightarrow \scr{H} (b) \otimes \C ^d$ is contractive in the sense of a Gleason solution:
$$ b_{dBR} ^* b_{dBR} \leq B_{dBR} ^* B_{dBR} \leq I - b(0) ^* b(0). $$ Here, recall that $B^R _\emptyset = b(0)$. Moreover,
\ba b_{dBR} & = & C_{H^2} B_{dBR} \nn \\
& = & C_{H^2} \rft \pi _B (L) ^* [I \otimes ] _B (I - b(0) ) \nn \\
& = & \mc{F} _b P _{H^2 (\mu _b) } \pi _{B} (L) ^* [I \otimes ] _B (I - b(0) ), \nn \ea where we have applied Proposition \ref{freeabelCT} in the above. Define a row contraction $D$ on $\scr{H} ^+ (H_b)$ by
$$ D^* K_0 ^b = U_b b_{dBR} (I - b(0) ) ^{-1}.$$ If we can show that $D \supseteq V^b$, then equation (\ref{GSform}) and the results of \cite[Section 4]{JM} will imply that $b_{dBR} = \mbf{b} [D]$ is a contractive Gleason solution for $b$. By definition,
$$ D^* K_0 ^b = \mc{C} _b P_{H^2 (\mu _b)} \pi _B (L) ^* [I \otimes ] _B, $$ so that
$$ D = \mc{C} _b P _{H^2 (\mu _b) } \pi _{B} ( L ) ^* | _{H^2 (\phi _D) }.$$ Indeed, anything else in $H^2 (\mu _b)$ is spanned by elements of the form
$$ (Lz^* ) (I - Lz^* ) ^{-1} \otimes h, $$ and the action of $\pi _B (L) ^* $ on such elements is the same as  that of $\hat{V} := \mc{C} _b ^* (V^b)  ^* \mc{C} _b$. It follows that $D \supseteq V^b$ so that $b _{dBR} = \mbf{b} [D]$ is a contractive Gleason solution for $b$.

The corresponding Gleason solution $\hat{X} = A _{dBR} ^*$ obeys
$$ \hat{X} ^* \hat{k} ^R _W = k^R _W W^* - B_{dBR}  B(W) ^*, \quad \quad \hat{X} ^* = (L^* \otimes I_\H ) | _{\scr{H} ^R (B)}, $$ let $X := a_{dBR} ^* = C_{H^2} \hat{X} C_{H^2 } ^*$.
Then,
\ba X^* k_w ^b & = & C_{H^2} (L^* \otimes I_\H ) (I - M^R _B (M ^R _B) ^* ) k_w \nn \\
& = & C_{H^2} \left( (L^* \otimes I _\H) (k_w - k_0 ) - (L^* \otimes I_\H)  M^R _B k_w b(w) ^* \right) \nn \\
& = & C_{H^2} \left(  w^* k_w - (L^* \otimes I_\H )  M^R _B (k_w -k_0 ) b(w) ^* + (L^* \otimes I_\H) M^R _B k_0 b(w) ^* \right) \nn \\
&= & C _{H^2 }  w^* (I - M^R _B ( M ^R _B) ^* ) k_w + C_{H^2} (L^* \otimes I_\H ) M^R _B k_0 b(w ) ^* \nn \\
& = & w^* k_w ^b + C_{H^2 }  (L^* \otimes I_\H) M ^R _B k_0 b(w) ^* \nn  \\
& = & w^* k_w ^b + C_{H^2} (L^* \otimes I_\H ) B b(w) ^* \nn \\
& = & w^* k_w ^b + C_{H^2} B_{dBR} b(w) ^* \nn \\
& = & w^* k_w ^b + b_{dBR} b(w) ^*, \nn \ea
and this shows that $a_{dBR} ^* = X = X [D]$ is the contractive Gleason solution for $\scr{H} (b)$ corresponding to $b_{dBR} = \mbf{b} [D]$. Also note that
$c_{dBR} ^* = C_{H^2} \hat{k} _\emptyset ^R = k_0 ^b$. To prove that $u_{dBR} = u_{dBR} [D]$ as defined in the theorem statement is a canonical deBranges-Rovnyak colligation for $b$, it remains to show, by \cite[Theorem 2.9]{Ball2010}, that $u_{dBR}$ is contractive. Since $C_{H^2}$ is a contraction, this is clear, and we conclude that $\mr{Ad} _{C_{H^2}} (U_{dBR}) = u_{dBR} [D]$.

To prove that this map from canonical deBranges-Rovnyak colligations $U_{dBR} [D]$ for $B$ to deBranges-Rovnyak colligations for $b$ is onto, let $u_{dBR}$ be any canonical deBranges-Rovnyak colligation for $b$.  Since $b_{dBR}$ is a contractive Gleason solution for $b$, it follows that there is a contractive extension $D \supseteq V^b$ so that
$$ u_{dBR} = \bbm a_{dBR} & \mbf{b} [D] \\ (k_0 ^b) ^* & b(0) \ebm.$$
As described above, if $\phi _D \in CP (\A ; \H )$ is the completely positive  extension of $\mu _b$ corresponding to $D \supseteq V^b$, then $\phi _D = \mu _{B[D]}$ for a unique pair of free lifts $B[D] = (B[D] ^L , B[D] ^R)$.

By Proposition \ref{freeGSform}, the unique contractive Gleason solution for $\scr{H} ^R (B[D] )$ is
$$ \mbf{B} [D] ^R := \rft \pi _D (L) ^* [I\otimes ] _{B[D] } (I - B _\emptyset), $$ and as in the first part of the proof $\mbf{b}  := C_{H^2} ^R \mbf{B} [D] ^R$ is a contractive Gleason solution for $b$. Since
$H^2 (\mu _b) = H^2 (\mu _{B[D]})$ is co-invariant for $\pi _D (L)$, Proposition \ref{freeabelCT} implies that
$$\mbf{b} = \mc{F} _b \pi _D (L) ^* [I\otimes ] _b  (I - b(0) ). $$ Again, by the first part of the proof $\mbf{b} = \mbf{b} [D' ]$ where the contractive extension $D' \supseteq V^b$ is defined by
\ba (D' ) ^* K_0 ^b &  =  & U_b \mbf{b} (I - b(0)) ^{-1} \nn \\
& = & \mc{C} _b \pi _D (L)^* [I \otimes ] _b \nn \\
& = & D^* K_0 ^b.\quad \quad \mbox{(By Lemma \ref{symext}.)} \nn \ea This proves that $D ' = D$, and as in the first part of the proof, it follows that the image of $U_{dBR} ^{L,R} [D]$ under conjugation by $C_{H^2}$ is $u_{dBR} [D]$, and that this is a canonical colligation for $b$. Since both
$$ u_{dBR} [D] = \bbm X[D]^* & \mbf{b} [D] \\ (k_0 ^b) ^* & b(0) \ebm, \quad \mbox{and} \quad u_{dBR} = \bbm a_{dBR} & \mbf{b} [D] \\ (k_0 ^b )^* & b(0) \ebm, $$ are canonical colligations for $b$, the uniqueness result \cite[Corollary 2.9]{Ball2007trans}, implies that $a_{dBR} ^* = X[D]$, so that $u_{dBR} = u_{dBR} [D]$, and $Ad _{C_{H^2}}$ implements a bijection of canonical pairs $(U_{dBR} ^L [D], U_{dBR} ^R [D] )$ onto canonical colligations for $b$.
\end{proof}


\end{document}